\newcommand{\B}{{\mathbb  B}}
\newcommand{\C}{{\mathbb  C}}
\newcommand{\D}{{\mathbb D}}
\renewcommand{\L}{{\mathcal L}}
\newcommand{\N}{{\mathbb  N}}
\renewcommand{\O}{{\mathcal O}}
\renewcommand{\P}{{\mathcal  P}}
\newcommand{\Q}{{\mathbb  Q}}
\newcommand{\R}{{\mathbb  R}}
\newcommand{\T}{{\mathbb  T}}
\newcommand{\Z}{{\mathbb  Z}}
\newcommand{\set}[2]{\{ #1 \,;\,  #2 \}} 
\newcommand{\scalar}[2]{{\langle#1,#2\rangle}}
\newcommand{\ch}{{\operatorname{ch}}}
\newcommand{\Log}{{\operatorname{Log}}}
\newcommand{\PSH}{{\operatorname{{\mathcal{PSH}}}}}
\renewcommand{\Re}{{\operatorname{Re}}}
\newcommand{\supp}{{\operatorname{supp}\, }}
\newcommand{\Lg}{\mathscr{L}}
\newcommand{\Conv}{\operatorname{Conv}}
\def
\def\vfigura#1#2{
\setbox0\vbox{{
\input #1
}}
\setbox1\vbox{\hbox{\box0}\hbox{{\obeylines #2}}}
\dimen0 = -\ht1
\advance\dimen0 by-\dp1
\dimen1 = \wd1
\dimen2 = -\dimen0
\divide\dimen2 by\baselineskip
\count100 = 1
\advance\count100 by\dimen2
\advance\count100 by1
\box1
\hangindent\dimen1
\hangafter=-\count100
\vskip\dimen0
}
\numberwithin{equation}{section}
\newtheorem{theorem+}           {Theorem}      [section]
\newtheorem{definition+}  [theorem+]  {Definition}
\newtheorem{lemma+}  [theorem+]  {Lemma}
\newtheorem{corollary+}  [theorem+]  {Corollary}
\newtheorem{proposition+}  [theorem+]  {Proposition}
\newtheorem{example+}  [theorem+]  {Example}
\newtheorem{problem+}  [theorem+]  {Problem}
\newtheorem{remark+}  [theorem+]  {Remark}
\newenvironment{theorem}{\begin{theorem+}\sl}{\end{theorem+}\rm}
\newenvironment{definition}{\begin{definition+}\rm}{\end{definition+}\rm}
\newenvironment{lemma}{\begin{lemma+}\sl}{\end{lemma+}\rm}
\newenvironment{corollary}{\begin{corollary+}\sl}{\end{corollary+}\rm}
\newenvironment{proposition}{\begin{proposition+}\sl}{\end{proposition+}\rm}
\newenvironment{proof}{\medbreak\noindent{\bf  Proof:}\rm}{\hfill$\square$\rm}
\title{{\Large \bf 
Holomorphic approximation by polynomials\\ with exponents restricted to a convex cone
}}
\author{Álfheiður Edda Sigurðardóttir}
\date{{}} 
\begin{document}
\maketitle

\begin{abstract} \noindent
We study the approximation of holomorphic functions of several complex variables by the ring $\P^S(\C^n)$ of polynomials whose exponents are restricted to a convex cone $\R_+S$ for some compact convex $S\in \R^n_+$.
We show a version of the Runge-Oka-Weil Theorem on approximation by these subrings on compact  subsets of $\C^{*n}$ that are convex with respect to $\P^S(\C^n)$. 
We show a sharper result on rotationally symmetric compact sets. The tools used are Hörmander's $L^2$-theory and Siciak-Zakharyuta functions $V^S_K$ associated to $S$. We provide a formula for $V^S_K$ when $K$ is a rotationally symmetric compact subset of $\C^{*n}$.

\medskip\par
\noindent{\em AMS Subject Classification}:
32A08. Secondary 32A10.  
\end{abstract}

\section[Introduction]{Introduction}
\label{SZsec:01}

Polynomials in $\C^n$ whose exponents are restricted to a convex cone $\Gamma\subset\R^n_+$ form a subring of the polynomials. We will regard cones $\Gamma$ that are the scaling of some compact convex  $S\subset \R^n_+$ with $0\in S$. Throughout this article, $S$ will denote such a set. Let the space ${\mathcal P}^S_m(\C^n)$ consist  of  all polynomials 
$p$ 
of the form $$p(z)=\sum_{\alpha\in (mS)\cap \N^n} a_\alpha z^\alpha, \qquad z\in \C^n.$$
Then $\P^S(\C^n)= \bigcup_{m\in \N}\P^S_m(\C^n)$ is the ring of the polynomials whose exponents are restricted to the cone  $\Gamma=\R_+S$. Note that if $p_1\in \P^S_{m_1}(\C^n)$ and $p_2\in \P^S_{m_2}(\C^n)$, then $p_1p_2\in \P^S_{m_1+m_2}(\C^n)$. Let $\Sigma$ denote the standard unit simplex, that is the convex hull of $0$ and the unit basis $\{e_1,\dots,e_n\}$. Then $p\in \mathcal{P}^\Sigma_m(\C^n)$ if and only if $p$ is of degree $\leq m$. 
Note that $\P^S(\C^n)$ contains all the holomorphic polynomials of $n$ variables if and only of $S$ is a neighborhood of 0 in $\R^n_+=(\R_+)^n$.

\medskip

We study the uniform approximation of holomorphic functions on compact sets by polynomials from polynomial spaces of the form $\P^S(\C^n)$. A motivation for this study comes from \cite{Tre:2017}, where it is shown that $S=\set{x\in \R^n_+}{\|x\|_2\leq 1}$ provides better approximation of holomorphic functions in certain neighborhoods of the unit hypercube $K=[-1,1]^n$ by polynomials from $\P^S_m(\C^n)$ than $S=\Sigma$ would. Better in this context means that the ratio between $\inf\set{\|f-p\|_K}{p\in \P^S_m(\C^n)}$ and the dimension of $\P^S_m(\C^n)$ decays faster as $m$ tends to infinity.

The speed of such approximation by polynomial classes $\P^S(\C^n)$ is studied in more generality in \cite{BosLev:2018}, with a generalization of the Bernstein-Walsh-Siciak theorem for a family of sets $S$ that all are a neighborhood of 0 in $\R^n_+$. 
In order to generalize that theorem for more general convex $S$ in the subsequent work \cite{MagSigSno:2023}, where $\P^S(\C^n)$ may be a proper subset of the polynomials on $\C^n$, we first shall study when approximation is possible in the first place. That is the goal of this current paper.

\medskip

By the Runge-Oka-Weil theorem,  a holomorphic function defined in a neighborhood of the polynomial hull $\widehat{K}$ of compact $K\subset \C^{n}$ can be approximated uniformly on $K$ to arbitrary precision by polynomials. To generalize that theorem, we need a notion analogous to the polynomial hull. We define the \emph{$S$-hull} of a compact set $K\subset \C^n$ by
\begin{equation*}\label{S-hull}
    \widehat{K}^S=\{z\in X\,;\, |p(z)| \leq\|p\|_K\text{ for all }p\in \P^S(\C^n)\}.
\end{equation*}
and we say that $K$ is \emph{$S$-convex} if $\widehat K^S=K$. 
The main theorem of this paper is a generalization of the Runge-Oka-Weil theorem on $S$-convex compact sets.

\begin{theorem}\label{bigTheorem}
    Let $S\subset \R^n_+$  be compact, convex and with $0\in S$. Let $K\subset \C^{*n}=(\C^*)^n$ be compact and  $S$-convex. 
    If $f$ is holomorphic in a neighborhood of $K$, then $f$ can be uniformly approximated on $K$ by polynomials from $\P^S(\C^n)$.
\end{theorem}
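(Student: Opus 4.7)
The plan is to adapt the classical Hörmander $\bar\partial$ proof of the Runge-Oka-Weil theorem, with ordinary polynomials replaced by $\P^S(\C^n)$ and the extremal function $V_K$ replaced by a plurisubharmonic barrier built from $\P^S$-polynomials. First pick a neighborhood $U \Subset \C^{*n}$ of $K$ on which $f$ is holomorphic and a cutoff $\chi \in C_c^\infty(U)$ equal to $1$ on a smaller neighborhood $V$ of $K$. Extending by zero, $\chi f$ becomes a smooth compactly supported function on $\C^n$ whose support lies in $\C^{*n}$, and
\[
g := \bar\partial(\chi f) = f\,\bar\partial\chi
\]
is a $\bar\partial$-closed smooth $(0,1)$-form compactly supported in $U\setminus V$, disjoint from $K$.

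The $S$-convexity hypothesis enters by furnishing a plurisubharmonic barrier. For each $w \in \mathrm{supp}(g)$ there exists $p_w \in \P^S(\C^n)$ with $|p_w(w)| > \|p_w\|_K$; by compactness of $\mathrm{supp}(g)$, finitely many $p_j \in \P^S_{m_j}(\C^n)$ (normalized so $\|p_j\|_K \le 1$) can be extracted so that
\[
\rho(z) := \max_j \frac{1}{m_j}\log|p_j(z)|
\]
is plurisubharmonic on $\C^n$, $\le 0$ on $K$, and $\ge \delta > 0$ on $\mathrm{supp}(g)$. Crucially, since the $p_j$ belong to $\P^S$, $\rho$ grows at infinity on $\C^{*n}$ like the support function of $S$ evaluated at $(\log|z_1|,\dots,\log|z_n|)$. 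For each integer $k \ge 1$, Hörmander's $L^2$-theorem on $\C^n$ with the psh weight $\phi_k := 2k\rho + (n+1)\log(1+|z|^2)$ yields $u_k$ solving $\bar\partial u_k = g$ with
\[
\int_{\C^n} |u_k|^2\, e^{-\phi_k}\,\frac{dV}{(1+|z|^2)^2}
\le \int_{\C^n} |g|^2\, e^{-\phi_k}\,\frac{dV}{(1+|z|^2)^2}
\le C\, e^{-2k\delta}.
\]
Since $g \equiv 0$ on $V$, $u_k$ is holomorphic there, so a mean-value estimate on small balls upgrades the $L^2$ bound to $\|u_k\|_K \le C' e^{-k\delta'}$. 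The entire function $F_k := \chi f - u_k$ then satisfies $\|F_k - f\|_K \le C' e^{-k\delta'} \to 0$.

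The remaining and most delicate task is to replace each $F_k$ by a polynomial $P_k \in \P^S(\C^n)$ which is still uniformly close to $f$ on $K$. The weight $\phi_k$ was designed so that its growth at infinity matches $2k$ times the support function of $S$, so an entire function in the corresponding weighted Bergman space has Taylor coefficients $c_\alpha$ that are severely constrained outside $kS \cap \N^n$. The plan is to symmetrize $\phi_k$ by averaging under the torus action $(z_1,\dots,z_n) \mapsto (e^{i\theta_1}z_1,\dots,e^{i\theta_n}z_n)$, which preserves plurisubharmonicity, thereby making the monomials $z^\alpha$ pairwise orthogonal; Parseval then forces the coefficients $c_\alpha$ with $\alpha \notin kS$ to be small, and truncating to $\alpha \in kS \cap \N^n$ produces the desired $P_k \in \P^S_k(\C^n)$ with $\|F_k - P_k\|_K$ small. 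The principal obstacle is precisely this passage from the weighted $L^2$-bound on $F_k$ to $\P^S$-polynomial approximation on $K$; the hypothesis $K \subset \C^{*n}$ is essential, since monomials in $\P^S$ may vanish on coordinate hyperplanes whenever $0$ lies on the boundary of $S$ in some coordinate direction, in which case the barrier $\rho$ and the function $V^S_K$ both degenerate there.
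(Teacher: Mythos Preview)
Your overall strategy---construct a barrier from $\P^S$-polynomials, apply H\"ormander's $L^2$-theorem with a weight built from it, and set $F_k=\chi f-u_k$---matches the paper's. The gap is your final paragraph, which is the heart of the matter and which you present only as a plan.

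The specific difficulty is the truncation step. Your weighted $L^2$ bound on $F_k$ is not uniform in $k$: the term $\chi f$ contributes $\int_U |\chi f|^2 e^{-2k\rho}\,dV$, and since $\rho$ may be negative on $U$ (it is only $\le 0$ on $K$), this is of order $e^{2kM}$ for some $M>0$. Even granting a Parseval bound $|c_\alpha|\le \|F_k\|/\|z^\alpha\|$ in the torus-symmetrized weight, you would still need a lower bound on $\|z^\alpha\|$ for $\alpha\notin kS$ strong enough to beat both the growth of $\|F_k\|$ and the factor $(1+|z|^2)^{-(n+3)}$, and then to sum the infinitely many tail terms $|c_\alpha z^\alpha|$ over $\N^n\setminus kS$ and show the sum is small on $K$. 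None of this is carried out, and with your fixed auxiliary exponent it is not clear it can be.

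The paper avoids truncation altogether by arranging that $p_m=\chi f-u_m$ lies \emph{exactly} in $\P^S_m(\C^n)$. Two ingredients make this work. First, the weight carries the extra term $2\nu(z)=2\scalar{\mathbf 1}{\Log\, z}$, the Jacobian of $\zeta\mapsto e^\zeta$; after that change of variables a submean-value argument (Proposition~\ref{cor:u_uni_bound}) upgrades the weighted $L^2$ bound to the pointwise estimate $|p_m(z)|\le C(1+|z|)^a e^{mH_{S_m}(z)}$. Second, the auxiliary exponent is taken as $a=2^{-m}$ rather than a fixed constant; a lattice-geometry argument (Lemma~\ref{integral_polytope} and Corollary~\ref{dist^{1/m}}) shows $\operatorname{dist}(mS_m,\N^n\setminus mS_m)^{1/m}\to 1$, so eventually $2^{-m}$ falls below this distance, which is precisely the hypothesis under which the Liouville-type characterization forces $p_m\in\P^{S_m}_m=\P^S_m$. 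Both the $\nu$-term and the shrinking exponent are missing from your $\phi_k$, and they are not cosmetic: without them the entire function produced by H\"ormander has no reason to be, or even to be uniformly close to, a polynomial with exponents in $\R_+S$.
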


\medskip

Note that in Theorem \ref{bigTheorem}, we require $K$ to not intersect the axis hyperplanes. The problem that arises if $K$ does intersect the axis can be summarized by the following example. Let $S\subset \R^2$ be the convex hull of $(0,0)$, $(1,0)$ and $(1,1)$. Then the poly\-nomials in $\P^S(\C^2)$ are all of the form $\text{\emph{constant}}+z_1p(z_1,z_2)$ where $p$ is a polynomial of two variables.  If $K$ contains more than one point in the hyperplane defined by $z_1=0$, then the holomorphic functions in a neighbourhood of $K$ cannot possibly be arbitrarily well approximated by polynomials from $\P^S(\C^2)$. Incidentally, in this example, such a set $K$ cannot be both compact and $S$-convex, because the $S$-hull of a single point ${(0,z_2)}$ is the whole $z_1=0$ hyperplane. Unlike the polynomial hull, an $S$-hull of a compact set is not necessarily compact. It would be interesting to find out if Theorem \ref{bigTheorem} is true for all $K\subset \C^n$ that are compact and $S$-convex. 

 \medskip

If $K\subset \C^n$ is any compact set and $f$ is a locally bounded complex valued function 
 defined on a neighborhood of $\widehat K ^S$ (not necessarily compact) and $f$ can be uniformly approximated by $\P^S(\C^n)$ on compact subsets of $\widehat K ^S$, then $f$ has to be bounded on $\widehat{K}^S$, and in fact $\|f\|_{\widehat{K}^S}=\|f\|_K$. Indeed, 
    for any $\varepsilon>0$ and $z_0\in \widehat K ^S$ 
    there is a $p\in \P^S(\C^n)$ such that $\|p-f\|_{K\cup\{z_0\}}<\varepsilon$. Denoting $C=\|f\|_K$, then $\|p\|_K<C+\varepsilon$. Since $z_0\in \widehat{K}^S$ we have $|p(z_0)|\leq\|p\|_K<C+\varepsilon$. Therefore $|f(z_0)|<C+2\varepsilon$. As this holds for any $\varepsilon>0$ and any $z_0\in \widehat{K}^S$, we can conclude that $\|f\|_{\widehat{K}^S}=C$.

\medskip

We have just presented a necessary condition for any generalization of the Runge-Oka-Weil Theorem for approximation by  $\P^S(\C^n)$, that  $f$ must satisfy $\|f\|_{\widehat{K}^S}=\|f\|_K$. 
This necessary condition is sufficient in the specific case when the compact set $K$ is rotationally symmetric in each variable, that is $(\zeta_1z_1,\dots, \zeta_nz_n)\in K$ for all $z\in K$ and $\zeta$ in the unit torus $\T^n$. We will call such a set a \emph{Reinhardt set}. We arrive at the next theorem. 

\medskip

\begin{theorem}\label{approx_on_E}   Let $S\subset \R^n_+$  be compact, convex and with $0\in S$. Let  $K$ be a compact Reinhardt set, let $X$ be a neighborhood of $\widehat{K}^S$ and let $f\in \O(X)$. Then the following are equivalent:
\begin{enumerate}[label=(\roman*)]
    \item $f$ can be approximated uniformly on $\widehat{K}^S$ by polynomials from $\P^S(\C^n)$.
    \item $f$ can be approximated uniformly on all compact subsets of $\widehat{K}^S$ by polynomials from $\P^S(\C^n)$. 
    \item $\|f\|_{\widehat{K}^S}=\|f\|_K$.
    \item $f$ is bounded on $\widehat{K}^S$.
    \item  There exists a power series of the form $h(z)= \sum_{\alpha\in \R_+S} a_\alpha z^\alpha$ that is convergent on $X$ such that $f=h$ on $\widehat{K}^S$.
\end{enumerate}
\end{theorem}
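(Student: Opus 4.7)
The plan is to establish the chain of implications (i) $\Rightarrow$ (ii) $\Rightarrow$ (iii) $\Rightarrow$ (iv) $\Rightarrow$ (v) $\Rightarrow$ (i). The first three arrows are short: (i) $\Rightarrow$ (ii) is trivial; (ii) $\Rightarrow$ (iii) is the necessary-condition argument from the paragraph preceding the theorem applied to the compact sets $K\cup\{z_0\}$ as $z_0$ ranges over $\widehat{K}^S$; and (iii) $\Rightarrow$ (iv) is clear since $\|f\|_K<\infty$ by continuity of $f$ on the compact $K$.

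For (iv) $\Rightarrow$ (v), I first observe that $\widehat{K}^S$ is Reinhardt: for $\zeta\in\T^n$ the map $p\mapsto p(\zeta\,\cdot\,)$ sends $\P^S(\C^n)$ into itself and preserves $\|\cdot\|_K$, so it preserves $\widehat{K}^S$. Replacing $X$ by the open Reinhardt neighborhood $\bigcap_{\zeta\in\T^n}\zeta^{-1}X$ of $\widehat{K}^S$ (open by compactness of $\T^n$), we may assume $X$ is Reinhardt. On the Reinhardt open set $X\cap\C^{*n}$, $f$ admits a Laurent expansion $f(z)=\sum_{\alpha\in\Z^n}a_\alpha z^\alpha$ on the connected component through any chosen $z_0\in\widehat{K}^S\cap\C^{*n}$, with Fourier formula $a_\alpha z^\alpha=\int_{\T^n}f(\zeta z)\bar\zeta^\alpha\,d\mu(\zeta)$. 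Combined with $\T^n\cdot z\subset\widehat{K}^S$ and the boundedness hypothesis, this yields
\begin{equation*}
|a_\alpha|\,|z|^\alpha\;\le\;M:=\|f\|_{\widehat{K}^S}<\infty\qquad\text{for every }z\in\widehat{K}^S\cap\C^{*n}.
\end{equation*}

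The crux is to force $a_\alpha=0$ for $\alpha\notin\R_+S$. For such an $\alpha$, since $\R_+S$ is a closed convex cone, Hahn-Banach produces $\xi\in\R^n$ with $\xi\cdot\alpha>0$ and $\xi\cdot\beta\le0$ for every $\beta\in\R_+S$. Fix $z_0\in\widehat{K}^S\cap\C^{*n}$ and, for $t\ge0$, set $z_t=(e^{t\xi_1}z_0^{(1)},\dots,e^{t\xi_n}z_0^{(n)})$. For every $\beta\in\R_+S\cap\N^n$, $|z_t^\beta|=|z_0^\beta|e^{t\xi\cdot\beta}\le|z_0^\beta|\le\sup_K|w^\beta|$, so all monomial inequalities defining $\widehat{K}^S$ are preserved. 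Using the classical Reinhardt fact that the $\P^S$-hull of a Reinhardt compact set coincides with the set of points satisfying these monomial inequalities (a Cauchy-averaging argument over $\T^n$ applied to polynomials in $\P^S(\C^n)$), we conclude $z_t\in\widehat{K}^S$. Substituting into the uniform bound gives $|a_\alpha|\,|z_0|^\alpha e^{t\xi\cdot\alpha}\le M$ for every $t\ge0$, and sending $t\to\infty$ forces $a_\alpha=0$. Setting $h(z):=\sum_{\alpha\in\R_+S\cap\N^n}a_\alpha z^\alpha$ delivers the desired representation, convergent on $X$ inherited from the Laurent series.

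Finally, for (v) $\Rightarrow$ (i), the partial sums $h_N(z):=\sum_{\alpha\in NS\cap\N^n}a_\alpha z^\alpha$ belong to $\P^S(\C^n)$. Because every monomial $z^\alpha$ with $\alpha\in\R_+S\cap\N^n$ itself lies in $\P^S(\C^n)$, the definition of the $S$-hull gives $\sup_{\widehat{K}^S}|z^\alpha|=\sup_K|z^\alpha|$, hence
\begin{equation*}
\|h-h_N\|_{\widehat{K}^S}\;\le\sum_{\alpha\in(\R_+S\setminus NS)\cap\N^n}|a_\alpha|\sup_K|z^\alpha|.
\end{equation*}
The series $\sum_\alpha|a_\alpha|\sup_K|z^\alpha|$ converges by Abel's lemma applied to a slightly dilated Reinhardt compact neighborhood of $K$ contained in $X$, on which $h$ converges absolutely, so the right-hand side tends to $0$ as $N\to\infty$. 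The main obstacle, and the real novelty over the classical Runge-Oka-Weil theorem, is the coefficient-vanishing step inside (iv) $\Rightarrow$ (v): the Reinhardt symmetry of $K$, the convex-cone structure of $\R_+S$, and the mere boundedness of $f$ on a possibly unbounded $\widehat{K}^S$ combine through Hahn-Banach separation to eliminate all Laurent terms outside $\R_+S$.
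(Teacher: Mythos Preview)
Your chain of implications and the easy arrows (i)$\Rightarrow$(ii)$\Rightarrow$(iii)$\Rightarrow$(iv) match the paper exactly. There is one genuine gap and one under-justified step; on the other hand your argument for (v)$\Rightarrow$(i) is cleaner than the paper's.

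\medskip
\textbf{The gap in (iv)$\Rightarrow$(v).} Your argument fixes a point $z_0\in\widehat{K}^S\cap\C^{*n}$ and runs everything through it, but such a point need not exist. For instance, with $n=2$, $S=\{0\}\times[0,1]$ and $K=\{0\}$, one has $\widehat{K}^S=\C\times\{0\}$, which is disjoint from $\C^{*2}$. In such cases there is no Laurent expansion to work with, and condition (v) is established only by comparing $f|_{\widehat{K}^S}$ with a function $h$ built from lower-dimensional data; there is no way to read off the coefficients of $h$ from values of $f$ on $\widehat{K}^S$ alone. The paper treats this case by an induction on $n$ (Proposition~\ref{circ_a_alpha=0_ekkiC^*n}): it restricts to the coordinate hyperplanes, applies the $(n-1)$-dimensional result to $f\circ\mu_k$ on $\pi_k(K\cap Z_k)$ with respect to $S_k$, and then glues the resulting series together. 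Your proof is correct once a point of $\C^{*n}$ is available (this is Lemma~\ref{circ_a_alpha=0}), but the remaining case is not a formality.

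\medskip
\textbf{The monomial-hull step.} Your claim that for Reinhardt $K$ the $S$-hull is exactly the set cut out by the monomial inequalities $|z^\alpha|\le\sup_K|w^\alpha|$, $\alpha\in\R_+S\cap\N^n$, is correct, but a single pass of Cauchy averaging only gives $|c_\alpha|\sup_K|w^\alpha|\le\|p\|_K$ for each coefficient, which yields $|p(z)|\le N\|p\|_K$ with $N$ the number of terms. You need the standard power trick (apply the estimate to $p^k$ and take $k$-th roots) to remove the factor $N$. The paper obtains $z_t\in\widehat{K}^S$ instead via the structural description $\widehat{K}^S\cap\C^{*n}=\Log^{-1}(\ch A-\Gamma^\circ)$ of Corollary~\ref{conehull} and Proposition~\ref{-dualcone}, which immediately shows that $\Log z_0+t\xi$ stays in the hull because $-\xi\in\Gamma^\circ$.

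\medskip
\textbf{(v)$\Rightarrow$(i).} Here your argument is genuinely different and more economical than the paper's. The observation $\sup_{\widehat{K}^S}|z^\alpha|=\sup_K|z^\alpha|$ for $\alpha\in\R_+S\cap\N^n$ lets you bound the tail of $h-h_N$ on the possibly unbounded $\widehat{K}^S$ by a tail on the compact $K$, and the dilation $(1+\delta)K\subset X$ turns normal convergence into the summability $\sum_\alpha|a_\alpha|\sup_K|z^\alpha|<\infty$ via a geometric factor $(1+\delta)^{-|\alpha|}$. The paper's Proposition~\ref{(v)=>(i)} instead shows $\|f-f_m\|_{\widehat{K}^S}\le\varepsilon$ by a convexity argument in logarithmic coordinates, splitting $\widehat{K}^S$ according to which coordinates vanish and using that bounded convex functions on $\R_+$ are decreasing. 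Your route avoids this case analysis at the cost of needing the absolute-summability estimate on $K$; both are valid, and yours is shorter.
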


Condition (i) could then be phrased as so:  $f$ is in the closure of $\P^S(\C^n)$ in the uniform topology on $\mathcal C_b(\widehat{K}^S)$, where  $\mathcal C_b(\widehat{K}^S)$ is the space of continuous bounded functions on $\widehat{K}^S$. Condition (ii) could be phrased as: $f$ is in the closure of $\P^S(\C^n)$ in the topology on $\mathcal C(\widehat{K}^S)$ generated by the seminorms $\|\cdot\|_A$ for all compact $A\subset \widehat{K}^S$.

\medskip

Any neighborhood $X$ of $\widehat{K}^S$ for a Reinhardt set $K$ contains a Reinhardt neighborhood of $\widehat{K}^S$, which contains the origin. Any function $f\in \O(X)$ can therefore be  expressed uniquely as a power series centered at the origin which is normally convergent in a neighborhood of $\widehat{K}^S$. 
If $K$ also contains a point $z\in\C^{*n}$, it contains the polycircle $\set{(|z_1|e^{i\theta_1},\dots, |z_n|e^{i\theta_n})}{\theta\in \R^n}$. In that case a holomorphic function $f$ is approximable by $\P^S(\C^n)$ precisely if its power series is of the form 
\begin{equation}\label{f_series}
    f(z)=\sum_{\alpha\in \R_+S}a_\alpha z^\alpha.
\end{equation}
If $K$ contains no point of $\C^{*n}$, then $K$ is a subset of $\C^n\setminus \C^{*n}$ which is pluripolar so 
we do not expect to determine the series of $f$ only by its values on $K$. Instead we can say that $f$ coincides on $K$ with some function whose series is of the form \eqref{f_series}. In  Proposition \ref{extend_holo} we see that if $f$ has a series expansion of the form \eqref{f_series} only known to be convergent in a neighborhood of a compact $K\subset \C^{*n}$, then $f$ extends as a holomorphic function in a neighborhood of $\widehat{K}^S$. The implication (v)$\Rightarrow$(i) is then true for the extension of $f$. 

\medskip

Theorem \ref{bigTheorem} is proven using Hörmander's $L^2$-theory. To construct the weights used in the proof, we use extremal plurisubharmonic functions studied in a series of papers \cite{ MagSigSig:2023, MagSigSigSno:2023, Sno:2024}, which we shall recount here. The \emph{supporting function} $\varphi_S\colon \R^n\to \R$ of $S$ is defined by $\varphi_S(\xi)=\sup_{s\in S}\scalar{s}{\xi}$, where $\scalar\cdot\cdot$ is the usual inner product on $\R^n$.  The \emph{logarithmic supporting function} $H_S$ of $S$ is defined by
\begin{align*}
    H_S(z)= \varphi_S({\Log\, z}), \qquad &z\in \C^{*n}\\
    H_S(z)=\varlimsup_{\C^{*n}\ni w\to z} H_S(w), \qquad &z\in \C^n\setminus\C^{*n}.
\end{align*}
where $\Log\, z= (\log|z_1|, \dots , \log|z_n|)$.
Let $\L^S(\C^n)$ be the class of all $u\in \PSH(\C^n)$ such that $u\leq H_S+c_u$ for some constant $c_u$  only depending on $u$.
Theorem 3.6 of \cite{MagSigSigSno:2023} implies that $p\in \O(\C^n)$ is a member of $\P^S_m(\C^n)$ if and only if $\log (|p|^{1/m})$ belongs to $ \L^S(\C^n)$. 

\medskip

 We associate to  $S$ and a compact $K\subset \C^n$  the $m$-th \emph{Siciak functions} 
\begin{align*}
    \Phi^S_{K,m}(z)= \sup\set{|p(z)|^{1/m}}{p\in \P^S_m(\C^n),\, \|p\|_K\leq 1},
\end{align*}
the \emph{Siciak function}, which can by \cite{MagSigSigSno:2023}, Proposition 2.2, be equivalently defined as 
$$\Phi^S_K=\sup_{m\in \N}\Phi^S_{K,m}=\lim_{m\to \infty}\Phi^S_{K,m},$$
and the \emph{Siciak-Zakharyuta function}
$$V^S_K(z)= \sup\set{u(z)}{u\in\L^S(\C^n),\, u|_K\leq 0}.$$
From the definition follows that $\widehat{K}^S= \set{z\in \C^n}{\Phi^S_K(z)=1}$. Replacing $S$ with $\overline{S\cap\Q^n}$ does not change the polynomial ring $\P^S(\C^n)$, so we may assume throughout the article that $S$ satisfies $\overline{S\cap\Q^n}=S$. Then \cite{MagSigSig:2023}, Theorem 1.1 states that $V^S_K(z)=\log\Phi^S_K(z)$ for all $z\in \C^{*n}$. Since $\widehat{K}^S\cap\C^{*n}= \set{z\in \C^{*n}}{V^S_K(z)=0}$, it is useful to derive a formula for $V^S_K(z)$.  

\medskip

\begin{proposition}\label{V^S_KformulaReinhardt} 
    Let $K\subset \C^{n}$ be a compact Reinhardt set and $A= \Log\, (K\cap\C^{*n})$. Assume  $S\cap \R^{*n}_+\neq \emptyset$ or  $K=\overline{K\cap \C^{*n}}$. 
    Then for all $z\in \C^{*n}$ we have
    \begin{equation}\label{Thm_1.3}
        V^S_K(z)=\sup_{s\in S} (\scalar s {\Log\, z} - \varphi_A(s)).
    \end{equation}
\end{proposition}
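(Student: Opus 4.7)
The plan is to identify both sides with $\log \Phi^S_K$. By Theorem~1.1 of \cite{MagSigSig:2023}, $V^S_K = \log\Phi^S_K$ on $\C^{*n}$, so letting $W(z)$ denote the right-hand side of the claim, it suffices to prove $\log \Phi^S_K(z)=W(z)$ by two matching inequalities, the $\leq$ via Cauchy estimates and the $\geq$ via explicit plurisubharmonic test functions.

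For the upper bound $\Phi^S_K \leq e^W$: any $p=\sum a_\alpha z^\alpha \in\P^S_m(\C^n)$ with $\|p\|_K\leq1$ satisfies, by Cauchy's formula on the distinguished boundary $\T^n\cdot w\subset K$ for $w\in K\cap\C^{*n}$ (Reinhardt symmetry), the coefficient bound $|a_\alpha|\leq |w_1|^{-\alpha_1}\cdots|w_n|^{-\alpha_n}$; taking the infimum over $w\in K\cap\C^{*n}$ produces $|a_\alpha|\leq e^{-\varphi_A(\alpha)}$. Using positive homogeneity of $\varphi_A$ and $\alpha/m\in S$,
$$|p(z)|\leq\sum_{\alpha\in(mS)\cap\N^n} e^{\scalar{\alpha}{\Log z}-\varphi_A(\alpha)}\leq\#\big((mS)\cap\N^n\big)\cdot e^{m W(z)},$$
and extracting $m$-th roots and letting $m\to\infty$ (the lattice-point count is polynomial in $m$) gives the bound.

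For the lower bound $V^S_K\geq W$: for $s\in S$ the function $u_s(z):=\scalar{s}{\Log z}-\varphi_A(s)$ is pluriharmonic on $\C^{*n}$, extends plurisubharmonically to $\C^n$, lies in $\L^S(\C^n)$ (dominated by $H_S$ shifted by the constant $-\varphi_A(s)$), and satisfies $u_s\leq 0$ on $K\cap\C^{*n}$ by definition of $\varphi_A$. If this inequality extends to all of $K$ then $u_s$ is a legitimate test function for $V^S_K$, giving $V^S_K\geq u_s$, and taking the supremum over $s\in S$ yields $V^S_K\geq W$. Under $K=\overline{K\cap\C^{*n}}$ the extension is automatic: approach any $z\in K\setminus\C^{*n}$ by a sequence in $K\cap\C^{*n}$ and pass to the limit using upper semicontinuity of $u_s$. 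Under $S\cap\R^{*n}_+\neq\emptyset$ one instead restricts to $s\in S\cap\R^{*n}_+$: every coordinate $s_j>0$ forces $u_s\equiv-\infty$ on $\C^n\setminus\C^{*n}$, so $u_s\leq 0$ on $K\setminus\C^{*n}$ holds trivially. To recover the supremum over the full $S$ in this case, fix any $e\in S\cap\R^{*n}_+$; for $s^*\in S$ and $s_\delta:=(1-\delta)s^*+\delta e\in S\cap\R^{*n}_+$, convexity of $\varphi_A$ gives $u_{s_\delta}(z)\geq(1-\delta)u_{s^*}(z)+\delta u_e(z)\to u_{s^*}(z)$ as $\delta\to 0$, so $V^S_K(z)\geq u_{s^*}(z)$ and the supremum over $s^*\in S$ again gives $V^S_K\geq W$.

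The main obstacle is precisely the validity of $u_s\leq 0$ on $K\setminus\C^{*n}$ for exponents $s$ with some coordinate equal to zero: without one of the two hypotheses, a coordinate-hyperplane component of $K$ on which $|z^s|$ does not vanish could force $u_s>0$ there and spoil the lower bound. The two alternatives rule this out from opposite ends — density of $K\cap\C^{*n}$ in $K$ in the first case, forced vanishing of $u_s$ on axis points in the second — and the convex-combination device above allows the boundary values of $W$ to be recovered from strictly interior $s$.
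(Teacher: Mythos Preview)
Your proof is correct but takes a genuinely different route from the paper's.

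The paper first passes to logarithmic coordinates and uses the characterization of $V^S_K(e^\xi)$ as a supremum over convex functions $v\in\operatorname{Conv}(\R^n)$ with $v\leq\chi_A$ and $v\preccurlyeq\varphi_S$ (valid when $K=\overline{K\cap\C^{*n}}$). Both inequalities are then obtained purely by Legendre--Fenchel duality: the formula $\sup_{s\in S}(\scalar{s}{\cdot}-\varphi_A(s))=\Lg(\Lg(\chi_A)\dotplus\chi_S)$ is itself such a $v$, and every admissible $v$ satisfies $\Lg(v)\dotplus\chi_S=\Lg(v)$, whence $v=\Lg^2(v)\leq\Lg(\Lg(\chi_A)\dotplus\chi_S)$. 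The second hypothesis $S\cap\R^{*n}_+\neq\emptyset$ is handled separately by a reduction lemma (Lemma~\ref{V^S_K'|_K'=0}) showing $V^S_K|_{\C^{*n}}=V^S_{K'}|_{\C^{*n}}$ for $K'=\overline{K\cap\C^{*n}}$.

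Your argument instead leans on the Siciak--Zakharyuta identity $V^S_K=\log\Phi^S_K$ on $\C^{*n}$ from \cite{MagSigSig:2023} as a black box, proving the upper bound by Cauchy coefficient estimates on the polycircles $\T^n\cdot w\subset K$ together with a lattice-point count, and the lower bound by inserting the explicit test functions $u_s$ directly into the definition of $V^S_K$. You also treat both hypotheses in one pass, using the convex-interpolation trick with a fixed $e\in S\cap\R^{*n}_+$ rather than a separate reduction lemma. The trade-off: the paper's argument is self-contained and does not require the (nontrivial) identity $V^S_K=\log\Phi^S_K$, while yours is more elementary in its mechanics---no Legendre transform, no auxiliary lemma on $V^S_{K'}$---at the cost of importing that identity.
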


\medskip

This extends Proposition 4.3 in \cite{MagSigSigSno:2023}, that $V^S_{\T^n}=H_S$  where $\T$ is the unit torus. Proposition \ref{V^S_KformulaReinhardt} implies that $\Log$  maps $\widehat K^S\cap \C^{*n}$  onto the hull of $\Log\, K$ with respect to the cone $\Gamma=\R_+S$ in $\R^n$.
Furthermore, 
Proposition \ref{piJKJ} provides a description of $\widehat K^S$ on  $\C^n\setminus \C^{*n}$, giving us a complete description of $S$-hulls of Reinhardt sets in $\C^{*n}$. 

\medskip

We prove Proposition \ref{V^S_KformulaReinhardt} in Section \ref{sec:Shulls_of_Reinhardt_sets}.
 In Section 2, we explore the properties of $S$-hulls in terms of the properties of the set $S$ itself. We provide examples of compact sets that satisfy the hypothesis of Theorem \ref{bigTheorem}, by showing in Proposition \ref{compact_hull} that if $S$ has non-empty interior, then compact subsets $K$ of $\R^{*n}_+$ have $\widehat K^S\subset \R^{*n}_+$. Section \ref{sec:Circled} 
is devoted to the proof of Theorem \ref{approx_on_E} and Section 5 to the proof of Theorem \ref{bigTheorem}. 

\medskip

\subsection*{Acknowledgment}  
The results of this paper are a part of a research project, 
{\it Holomorphic Approximations and Pluripotential Theory},
with  project grant 
no.~207236-051 supported by the Icelandic Research Fund. The author was also funded by the Assistant Teacher grant at the University of Iceland. I would like to thank the Fund for its support and the 
Mathematics Division, Science Institute, University of Iceland,
for hosting the project. I thank my
supervisors Benedikt Steinar Magnússon and Ragnar Sigurðsson for their guidance and Bergur Snorrason for a careful reading of the manuscript. Upon completeon of this paper, the author is a postdoctoral fellow at the Institute of Mathematics, Physics and Mechanics in Ljubljana, Slovenia.

\section[$S$-convex sets]{ $S$-convex sets}\label{sec:S-convex}
In order to appreciate Theorem \ref{bigTheorem} it is necessary to identify some compact $S$-convex subsets of $\C^{*n}$, to know that it is not a statement on an empty family of sets. In fact, if $\widehat K^S$ is a compact subset of $\C^{*n}$, then Theorem \ref{bigTheorem} implies that $\widehat K^S=\widehat K$. Indeed, if $z\in \widehat{K}^S\setminus \widehat K$, there exists a polynomial $f$ such that $|f(z)|>\|f\|_K$. By Theorem \ref{bigTheorem}, there exists a $p\in \P^S(\C^n)$ such that $\|f-p\|_{\widehat K^S}<(|f(z)|- \|f\|_K)/2$. Then $|p(z)|> \|p\|_K$, which contradicts the assumption that $z\in \widehat{K}^S$. Therefore $\widehat{K}^S= \widehat K$. 

\medskip

Recall that $S\subset\R^{n}_+$ is a neighborhood of zero in $\R^n_+$ if there exists an $r>0$ such that $r\Sigma\subset S$, where  $\Sigma$ is the standard unit simplex. If $S$ is a neighbourhood of zero, then $\mathcal{P}^S(\C^n)$ contains all the polynomials in $\C^n$, so that case gives no new approximating result.  If $S$ is not a neighborhood of zero, there is some standard basis vector $e_k$ for $k\in[n]=\{1,\dots, n\}$ such that $e_k\notin \R_+S$. Every  $p\in\P^S(\C^n)$ can then be written of the form $p(z)= c+\sum_{j\neq k}z_jp'_j(z)$ where $c$ is a constant and $p'_j$ is a polynomial for $j\in[n]\setminus\{k\}$. Then $p$ is constant on the axis $\C e_k$ in $\C^n$. So if a set $K$ contains a point from $\C e_k$, then $\widehat{K}{}^S$ contains $\C e_k$, and can not be compact. 

\medskip

In the same vein we can identify more subspaces where polynomials from $\P^S(\C^n)$ are fixed depending on $S$, but for that we need some notation. For an ordered subset $J=(j_1,\dots, j_\ell)$ of $[n]$  we denote $\C^{J}=\set{z\in \C^n}{z_j = 0 \text{ if }j\notin J}$ and $\C^{*J}=\set{z\in \C^n}{z_j \neq 0 \text{ if and only if }j\in J}$ and define $\R^J=\C^J\cap\R^n$. 
 Let $\pi_J\colon\C^{n}\to \C^{\ell}$, $\pi_J(z)=(z_{j_1},\dots, z_{j_\ell})$ and let $S_J=\pi_J(S\cap \R^J)$. Proposition 3.3 of  \cite{MagSigSigSno:2023},  states that 
\begin{equation}\label{H_S(z)=H_S_J(pi_J(z)).}
     H_S(z)=H_{S_J}(\pi_J(z)), \qquad z\in \C^J.
 \end{equation}
 Every $p\in \P^S_m(\C^n)$ satisfies $\log|p|\leq {mH_S}+c$ for all $z\in \C^n$ and some constant $c$ by \cite{MagSigSigSno:2023}, Theorem 3.6. If $J\subset [n]$ is such that $S_J=\{0\}$, the right hand side of \eqref{H_S(z)=H_S_J(pi_J(z)).} is zero. Any $p\in \P^S(\C^n)$ is then bounded, hence constant, on $\C^J$, which is an unbounded set if $J\neq \emptyset$. 

 \medskip
 
 While singletons on $\C^n\setminus\C^{*n}$ may have unbounded $S$-hulls, we show in Corollary \ref{singleton} that the singletons in $\C^{*n}$ are $S$-convex if $S$ has non-empty interior, referred to as a \emph{convex body}. This follows from the fact that the family $\P^S(\C^n)$  {separates the points} of $\C^{*n}$, meaning that for every pair $x,y\in \C^{*n}$, $x\neq y$ there exists  $p\in \P^S(\C^n)$ such that $p(x)\neq p(y)$. Furthermore, this can be done only using monomials.

\begin{proposition}\label{monomials_separate_points}
    Let $S$ be a convex body. Then the monomials in $\P^S(\C^n)$ separate the points of $\C^{*n}$.
\end{proposition}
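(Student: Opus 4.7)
My plan is to reduce the statement to a group-theoretic fact about lattice points in the cone $\R_+S$. A monomial $z^\alpha$ belongs to $\P^S(\C^n)$ precisely when $\alpha\in\N^n\cap\R_+S$, since $\bigcup_{m\in\N}mS=\R_+S$ by star-shapedness of $S$ with respect to $0$. Given distinct $x,y\in\C^{*n}$, I would form the coordinatewise ratios $z_j=x_j/y_j$ and rephrase the goal as: exhibit $\alpha\in\N^n\cap\R_+S$ with $z^\alpha\neq 1$.

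Arguing by contradiction, suppose $z^\alpha=1$ for every $\alpha\in\N^n\cap\R_+S$. Then the subgroup $G$ of $\Z^n$ generated by $\N^n\cap\R_+S$ is contained in the character lattice $\Lambda=\{\beta\in\Z^n : z^\beta=1\}$. The whole argument then comes down to proving $G=\Z^n$, for then $e_j\in\Lambda$ for each $j$, which gives $z_j=z^{e_j}=1$ and hence $x=y$, a contradiction.

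To show $G=\Z^n$ I would use that $S$ is a convex body. Any interior point $x_0$ of $S$ in $\R^n$ has all coordinates strictly positive, since $S\subset\R^n_+$, and is the center of a ball $B(x_0,\varepsilon)\subset S$. The cone $\R_+S$ then contains $B(tx_0,t\varepsilon)$ for every $t>0$. Given an arbitrary $v\in\Z^n$, I choose $t$ so large that $t\varepsilon > \|v\|+\sqrt n$ and every coordinate of $tx_0$ exceeds $\|v\|+\sqrt n$. A standard lattice-point approximation produces an $\alpha\in\N^n$ within distance $\sqrt n/2$ of $tx_0$, and then both $\alpha$ and $\alpha+v$ lie in $B(tx_0,t\varepsilon)\cap\N^n\subset\R_+S\cap\N^n$. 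Writing $v=(\alpha+v)-\alpha$ exhibits $v$ as an element of $G$, and since $v$ was arbitrary, $G=\Z^n$.

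I expect the main obstacle to be precisely this lattice-translation step, which is where the convex-body hypothesis is essential. Without non-empty interior, $\R_+S$ could be a lower-dimensional cone whose integral points generate only a proper subgroup of $\Z^n$, leaving nontrivial characters $z\neq(1,\dots,1)$ on which every admissible monomial evaluates to $1$; the positivity of every coordinate of the interior point $x_0$ is exactly what lets the translates $\alpha+v$ stay in $\N^n$ rather than merely in $\Z^n$.
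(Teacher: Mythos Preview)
Your proof is correct, but it packages the argument differently from the paper's. The paper proceeds by a direct case split: if $|z_j|\neq|w_j|$ for some $j$, it picks $n$ linearly independent lattice points $\alpha_1,\dots,\alpha_n\in\R_+S\cap\N^n$ and observes that one of them must separate the moduli; if the moduli agree but an argument differs, it finds $\alpha\in\R_+S\cap\N^n$ with $\alpha+\Sigma\subset\R_+S$ and notes that one of $\alpha$ or $\alpha+e_j$ separates the arguments. Your approach absorbs both cases into the single group-theoretic statement that the subgroup of $\Z^n$ generated by $\R_+S\cap\N^n$ is all of $\Z^n$, proved by exhibiting any $v\in\Z^n$ as a difference $(\alpha+v)-\alpha$ of two lattice points lying in a large ball inside the cone. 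This is a cleaner conceptual picture---separation by admissible monomials is exactly the statement that these lattice points generate the full lattice---and it treats modulus and argument uniformly via the character $\Z^n\to\C^*$, $\beta\mapsto z^\beta$. The paper's version is a bit shorter because it only ever needs the differences $e_j$ rather than a general $v$, but your formulation makes the role of the nonempty-interior hypothesis particularly transparent.
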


We use the notation $e^x=(e^{x_1},\dots, e^{x_n})\in \C^n$ for $x=(x_1,\dots, x_n)\in \C^n$.

\begin{proof}
    Let $z,w\in \C^{*n}$, $z\neq w$ and let $\xi,\eta,\theta,\phi\in \R^n$ be such that $e^{\xi_j+i\theta_j}=z_j$ and $e^{\eta_j+i\phi_j}=w_j$ for $j=1,\dots, n$. Assume first that $\xi\neq\eta $. Let the points $\alpha_1,\dots, \alpha_n\in \R_+S\cap\N^n$ be linearly independent. Then there is some $k\in [n]$ such that $\langle  \alpha_k,\xi\rangle \neq \langle  \alpha_k,\eta\rangle$ which implies that $|z^{\alpha_k}|\neq |w^{\alpha_k}|$. Otherwise, there exists some $j\in [n]$ such that $\theta_j-\phi_j\notin 2\pi\Z$.  Since the cone $\R_+S$ has nonempty interior, there exists some $\alpha\in \R_+S\cap \N^n$ such that $\alpha+\Sigma\subset \R_+S$. Then $\frac{1}{2\pi}\left(\scalar{\alpha+e_j}{\theta-\phi}-\scalar{\alpha}{\theta-\phi}\right)= \frac{1}{2\pi}\scalar{e_j}{\theta-\phi}$ is not an integer, so either $\beta=\alpha+e_j$ or $\beta=\alpha$ is such that $\frac{1}{2\pi}\scalar{\beta}{\theta-\phi}$ is not an integer. Then $z^\beta$ and $w^\beta$ have different arguments.
\end{proof}

\begin{corollary}\label{singleton}
    If $S$ is a convex body and $x\in \C^{*n}$, then  $\{x\}$ is $S$-convex.
\end{corollary}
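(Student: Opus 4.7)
The plan is to combine Proposition \ref{monomials_separate_points} with the ring structure of $\P^S(\C^n)$ via a subtraction trick. Suppose $z \in \widehat{\{x\}}^S$. For each $\alpha \in \R_+S \cap \N^n$, the polynomial $p_\alpha(w) = x^\alpha - w^\alpha$ belongs to $\P^S(\C^n)$, since $0 \in S$ guarantees that all constants lie there and $w^\alpha$ is itself in $\P^S(\C^n)$. Because $p_\alpha(x) = 0$, the defining inequality of the $S$-hull forces $|p_\alpha(z)| \leq \|p_\alpha\|_{\{x\}} = 0$, so $z^\alpha = x^\alpha$ for every $\alpha \in \R_+S \cap \N^n$.

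From there I would split into cases. If $z \in \C^{*n}$, Proposition \ref{monomials_separate_points} finishes the proof: were $z \neq x$, some monomial $w^\alpha$ with $\alpha \in \R_+S \cap \N^n$ would satisfy $z^\alpha \neq x^\alpha$, contradicting the identity just obtained. If instead $z \in \C^n \setminus \C^{*n}$, then some coordinate $z_j$ vanishes while $x_j \neq 0$; to reach a contradiction I would exhibit a single $\alpha \in \R_+S \cap \N^n$ with $\alpha_j \geq 1$ for every $j$, since for such $\alpha$ one has $z^\alpha = 0$ but $x^\alpha \neq 0$.

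Such $\alpha$ is produced from the convex body hypothesis: since $S$ has non-empty interior in $\R^n$, so does $\R_+S$, and being contained in $\R^n_+$ this interior lies in the open positive orthant. Any non-empty open subset of that orthant contains rational vectors with strictly positive coordinates, and clearing denominators yields the required $\alpha$.

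The main obstacle, such as it is, is the second case, and that is precisely where the convex body hypothesis is essential; were $S$ allowed to sit in a coordinate hyperplane, every element of $\P^S(\C^n)$ would be constant along the complementary axis and the $S$-hull of $\{x\}$ would visibly contain that whole axis translate. Everything else is a direct application of Proposition \ref{monomials_separate_points} together with the constant-subtracting polynomial trick.
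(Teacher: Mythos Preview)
Your proof is correct and follows the same route the paper indicates: the paper states only that the corollary ``follows from the fact that the family $\P^S(\C^n)$ separates the points of $\C^{*n}$'' (Proposition~\ref{monomials_separate_points}), and your constant-subtracting polynomial $p_\alpha(w)=x^\alpha-w^\alpha$ is exactly how one converts point-separation by monomials into the strict inequality $|p_\alpha(z)|>\|p_\alpha\|_{\{x\}}$ needed for the hull. Your explicit treatment of the case $z\in\C^n\setminus\C^{*n}$, via an $\alpha\in\R_+S\cap\N^n$ with all coordinates positive, is a detail the paper's one-line justification leaves implicit but which is indeed required, since the $S$-hull is taken in all of $\C^n$; your argument for producing such an $\alpha$ from the convex-body hypothesis is sound.
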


We will show in Proposition \ref{compact_hull} that if $S$ is a convex body, then $S$-hulls of compact subsets of $\R^{*n}_+$ satisfy the hypothesis of Theorem \ref{bigTheorem}, that is are $S$-convex subsets of $\C^{*n}$. But first we need a Lemma.

\begin{lemma}\label{proper}
    Let $\alpha_1,\dots, \alpha_n\in \N^n$ be linearly independent. Then the map $F\colon \C^{*n}\to \C^{*n}$ defined by $F(z)=(z^{\alpha_1},\dots ,z^{\alpha_n})$ is proper.
\end{lemma}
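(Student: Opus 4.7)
The plan is to verify properness directly from the definition: for any compact $K\subset\C^{*n}$, the preimage $F^{-1}(K)$ must be compact. Since $F$ is continuous and $\C^{*n}$ is Hausdorff, $F^{-1}(K)$ is automatically closed, so it suffices to show it is bounded inside $\C^{*n}$, meaning bounded above in modulus and bounded away from the coordinate hyperplanes.

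The key observation is that $F$ linearizes under the $\Log$ map. Writing $\log|F(z)_i|=\log|z^{\alpha_i}|=\langle\alpha_i,\Log z\rangle$, the map $\Log\circ F$ equals $A\circ \Log$, where $A$ is the $n\times n$ matrix whose rows are $\alpha_1,\ldots,\alpha_n$. By hypothesis these rows are linearly independent, so $A$ is invertible. A compact set $K\subset\C^{*n}$ satisfies $\epsilon\leq|w_i|\leq M$ for some $0<\epsilon<M$ and all $w\in K$, hence $\Log K$ is contained in a box $[\log\epsilon,\log M]^n$, in particular bounded. Pulling back by the linear isomorphism $A$, we conclude that $\Log(F^{-1}(K))\subset A^{-1}(\Log K)$ is bounded in $\R^n$, which is exactly the statement that $F^{-1}(K)$ is bounded away from the axes and bounded in modulus. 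Combined with closedness this gives compactness.

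I do not see a serious obstacle here: the only subtlety is to ensure that one works inside $\C^{*n}$ (so that $\Log$ is defined and the bounds translate correctly), rather than in $\C^n$, where $F$ is certainly not proper once some $\alpha_i$ has a zero component. The linear-independence hypothesis is used exactly once, to invert $A$, and this is also exactly where the argument fails without it: if the $\alpha_i$ were linearly dependent, there would be a nontrivial linear subspace in $\Log z$-space mapped to zero, giving noncompact fibers.
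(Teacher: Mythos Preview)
Your proof is correct and is essentially the same as the paper's: both observe that $\Log\circ F = A\circ\Log$ for the invertible linear map $A$ with rows $\alpha_1,\dots,\alpha_n$ (the paper writes this as $L^*$ with $L(e_j)=\alpha_j$), then pull back a bounding box for $\Log K$ through $A^{-1}$ to trap $F^{-1}(K)$ in a compact polyannulus. The only cosmetic difference is that you make the closedness of $F^{-1}(K)$ explicit, while the paper leaves it implicit.
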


\begin{proof}
    Let $L\colon \R^n\to \R^n$ be the linear map that maps $e_j$ to $\alpha_j$.
    Denote the adjoint of $L$ by $L^*$ and denote $\Log\,z=(\log|z_1|,\dots, \log|z_n|)$.
    Regard that $\Log\, F(z)=L^*\Log \,z$. 
    If $K$ is a compact set in $\C^{*n}$, it is contained in some polyannulus,
    $$K\subset\{z\in \C^n\,;\, e^{a_j}\leq |z_j|\leq e^{b_j}, \, j=1,\dots, n\}=\Log^{-1} \Big( \prod_{j=1}^n [a_j,b_j]\Big).$$
    Then 
    $F^{-1}(K)\subset (\Log\, F)^{-1} \big(\prod_{j=1}^n [a_j,b_j]\big)=(L^*\, \Log)^{-1} \big(\prod_{j=1}^n [a_j,b_j]\big).$

    Now $L^*$ is linear and bijective  since $L$ is, so $(L^*)^{-1} \prod_{j=1}^n [a_j,b_j] $ is compact, and is therefore contained in some box $\prod_{j=1}^n[c_j,d_j]$. Then finally 
    $$F^{-1}(K)\subset \Log^{-1}\Big(\prod_{j=1}^n[c_j,d_j]\Big)=\{z\in \C^n\,;\, e^{c_j}\leq |z_j|\leq e^{d_j}, \, j=1,\dots, n\},$$ so it is compact. We have shown that $F$ is proper as a map $\C^{*n}\to \C^{*n}$.\end{proof}

\medskip

\begin{proposition}\label{compact_hull}
Assume $S$ is a convex body 
and let $K$ be a compact subset of $\R^{*n}_+$. Then $\widehat K ^S$ is a compact subset of $\R^{*n}_+$.
\end{proposition}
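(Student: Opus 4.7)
The plan is to first show $\widehat K^S\subset \R^{*n}_+$ and then to show it is compact. The main ingredient is testing elements of $\widehat K^S$ against the polynomials $p_c(z) = c - z^\alpha \in \P^S(\C^n)$, for $\alpha \in \R_+ S \cap \N^n$ and large real $c$. Because $K\subset \R^{*n}_+$, the monomial $y \mapsto y^\alpha$ is real positive on $K$, so $m_\alpha := \min_K y^\alpha > 0$ and $M_\alpha := \max_K y^\alpha < \infty$; for $c > M_\alpha$ we have $\|p_c\|_K = c - m_\alpha$. For $z \in \widehat K^S$, the defining inequality $|c - z^\alpha| \leq c - m_\alpha$ places $z^\alpha$ in a closed disk tangent to the vertical line $\operatorname{Re}(w) = m_\alpha$, yielding the key bound $\operatorname{Re}(z^\alpha) \geq m_\alpha > 0$ for all $z\in \widehat K^S$ and all $\alpha\in \R_+S\cap\N^n\setminus\{0\}$.

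From this I would draw two consequences. First, pick $\alpha \in \R_+ S \cap \N^n$ with strictly positive components (available since $\R_+ S$ has non-empty interior in $\R^n$); then $\operatorname{Re}(z^\alpha) > 0$ forces $z_j \neq 0$ for every $j$, so $\widehat K^S \subset \C^{*n}$. Writing $z = r e^{i\theta}$ with $r = |z| \in \R^{*n}_+$ and $\theta \in \R^n$, the bound becomes $r^\alpha \cos\langle \alpha, \theta \rangle \geq m_\alpha > 0$; applying it to $k\alpha$ for every $k\in\N$ gives $\cos(k\phi) > 0$ for all $k \in \N$, where $\phi := \langle \alpha, \theta \rangle$. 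A standard orbit argument -- the sequence $\{k\phi\bmod 2\pi\}_{k\in\N}$ is either a nontrivial cyclic subset of $\R/2\pi\Z$ or equidistributed, so $\phi\not\in 2\pi\Z$ produces some $k$ with $k\phi\bmod 2\pi \in (\pi/2, 3\pi/2)$ -- then gives $\langle \alpha, \theta \rangle \in 2\pi \Z$ for every $\alpha \in \R_+ S \cap \N^n$.

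Next I would mimic the proof of Proposition \ref{monomials_separate_points} to upgrade this to $\theta \in 2\pi \Z^n$. If some $\theta_j \notin 2\pi \Z$, pick $\alpha \in \R_+ S \cap \N^n$ with $\alpha + \Sigma \subset \R_+ S$ (available by non-empty interior); then $\langle \alpha + e_j, \theta \rangle - \langle \alpha, \theta\rangle = \theta_j \notin 2\pi\Z$, so at least one of $\langle \alpha, \theta \rangle$ or $\langle \alpha + e_j, \theta \rangle$ is not in $2\pi \Z$, contradicting the previous paragraph. Hence $\theta \in 2\pi \Z^n$ and $z = r \in \R^{*n}_+$, proving $\widehat K^S \subset \R^{*n}_+$.

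For compactness I would pick linearly independent $\alpha_1, \dots, \alpha_n \in \R_+ S \cap \N^n$ (again by non-empty interior) and apply Lemma \ref{proper} to the map $F(z) = (z^{\alpha_1}, \dots, z^{\alpha_n})$. On $\widehat K^S$ we have $m_{\alpha_j} \leq \operatorname{Re}(z^{\alpha_j}) \leq |z^{\alpha_j}| \leq M_{\alpha_j}$, so $F(\widehat K^S)$ lies in a compact polyannulus in $\C^{*n}$. By properness, the preimage is compact in $\C^{*n}$, and since $\widehat K^S$ is closed in $\C^n$ and contained in this preimage, it is compact. The main obstacle I expect is the argument-trapping step of the middle paragraphs: extracting $\operatorname{Re}(z^\alpha) \geq m_\alpha$ is immediate, but converting these real-positivity constraints into the statement $z \in \R^{*n}_+$ requires combining them across all powers $k\alpha$ together with the monomial-separation trick from Proposition \ref{monomials_separate_points}.
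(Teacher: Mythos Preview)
Your proof is correct, and for the compactness step it coincides with the paper's: both use linearly independent $\alpha_1,\dots,\alpha_n\in\R_+S\cap\N^n$, test against polynomials of the form $c-z^{\alpha_j}$ to trap $F(\widehat K^S)$ in a compact polyannulus, and then invoke Lemma~\ref{proper}. Where you diverge is in the argument that $\widehat K^S\subset\R^{*n}_+$. You squeeze the sharper inequality $\operatorname{Re}(z^\alpha)\ge m_\alpha>0$ out of $|c-z^\alpha|\le c-m_\alpha$, and then run an orbit argument on $\cos(k\langle\alpha,\theta\rangle)>0$ to force $\langle\alpha,\theta\rangle\in 2\pi\Z$, finishing with the $\alpha$ versus $\alpha+e_j$ trick from Proposition~\ref{monomials_separate_points}. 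The paper instead picks, via Proposition~\ref{monomials_separate_points}, a single $\alpha$ with $z^\alpha\notin\R_+$, observes that $K'=\{y^\alpha:y\in K\}\subset\R_+$ is a compact subset of $\R$ and hence polynomially convex in $\C$, and pulls back a one-variable separating polynomial $p$ to get $p(w^\alpha)\in\P^S(\C^n)$ with $|p(z^\alpha)|>\|p(\,\cdot^\alpha)\|_K$. The paper's route is shorter and more conceptual (it offloads the work to one-variable polynomial convexity), while yours is more self-contained and yields the explicit quantitative bound $\operatorname{Re}(z^\alpha)\ge m_\alpha$ along the way, which you then reuse both for $\widehat K^S\subset\C^{*n}$ and for the lower bound in the polyannulus.
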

\begin{proof}
    Let $\alpha_1,\dots, \alpha_n\in \R_+S\cap \N^n$ be linearly independent. By Lemma \ref{proper}, $F\colon \C^{*n}\to \C^{*n}$ defined by $F(z)=(z^{\alpha_1},\dots ,z^{\alpha_n})$ is proper. Now $F(K)$ is bounded and it is contained in some polyannulus $R\D^n\setminus (\mathbb H+r\D^n)$ where $\mathbb H= \C^n\setminus \C^{*n}$. Then $\widehat K^S$ is contained in $F^{-1}(R\D^n)$.  For each $j\in [n]$, regard the polynomial $p_j(z)=R-z^{\alpha_j}$. Then $\|p_j\|_K= R-|z^{\alpha_j}|\leq R$ and $|p_j(z)|>R-r$ whenever $|z^{\alpha_j}|<r$. The set $$\set{z\in \C^n}{|z^{\alpha_j}|<r, \text{ for some }j=1,\dots, n}= F^{-1}(\mathbb H+r\D^n)$$ does therefore not intersect $\widehat K^S$. Hence $\widehat K^S\subset F^{-1}(R\D^n\setminus (\mathbb H+r\D^n))\subset \C^{*n}$, which is compact because $F$ is proper. 

     Let $z\in \C^{*n}\setminus \R^n$. By Proposition \ref{monomials_separate_points} there is an $\alpha\in \R_+S\cap \N^n$ such that $z^\alpha\neq r^\alpha$ for $r=(|z_1|,\dots,|z_n|)$. Since $|z^\alpha|=|r^\alpha|$, the arguments of $z^\alpha$ and $r^\alpha$ must be different. Now $r^\alpha\in \R_+$, so $z^\alpha\notin \R_+$. Let $p'(w)=w^\alpha$, and $K'= p'(K)$. Then $K'$ is a compact subset of $\R$, so it is polynomially convex. Then there exists a polynomial $p\in \P(\C)$ such that $|p(z^\alpha)|>\|p\|_{K'}$, which implies that $p\circ p'\in \P^S(\C^n)$ is such $|p\circ p'(z)|>\|p\circ p'\|_{K}$. Therefore $z\notin\widehat{K}^S$, and we conclude that  $\widehat{K}^S\subset \R^n_+$.  
\end{proof}

\bigskip

 The case when $S$ has empty interior is thoroughly studied in \cite{MagSigSig:2023}, and we will review some of the results from there.   Then the convex set $S$ is of some lower dimension $\ell<n$ and there exists a linear map  $L\colon \R^\ell\to\R^n$ whose image contains $S$. We may assume that $\overline{S\cap\Q^n}=S$ throughout this article, since replacing $S$ with $\overline{S\cap\Q^n}$ does not affect the polynomial ring $\P^S(\C^n)$. This condition is also the sufficient and necessary for \cite{MagSigSig:2023}, Theorem {1.2}. It states that $L$ can be chosen so it maps the lattice points $\Z^\ell\subset\R^\ell$ onto the lattice points of its image, that is $L({\Z^\ell})= (\operatorname{span}_\R S)\cap \Z^n$. Furthermore, $L$ can be chosen so the compact convex set $T=L^{-1}(S)$ is a subset of  $\R^\ell_+$.

 \medskip

By Theorem {1.2} of  \cite{MagSigSig:2023}, the map $F_L\colon \C^{*n}\to \C^{*\ell}$, $F_L(z)=(z^{L(e_1)},$ $z^{L(e_\ell)})$
is such that every polynomial $p\in \P^S_m(\C^n)$ can be factored as $p=p'\circ F_L$ for some  $p'\in \P^T_m(\C^\ell)$. The fiber of $p$ through a point $z\in \C^{*n}$ therefore contains the fiber of $F_L$ through $z$, which is an unbounded $(n-\ell)$-dimensional submanifold. Therefore the $S$-hull of any $K$ with $ K\cap \C^{*n}\neq \emptyset$ is unbounded. We can even parameterize $\widehat{\{z\}}{}^S$ for any point $z\in \C^{*n}$.  Let $\beta'_{1},\dots, \beta'_{n-\ell}\in \N^n$ be generators of the lattice orthogonal to $S$, that is $(^\perp\operatorname{span}_\R S)\cap \N^n$, and regard the dual set of vectors $\beta_{jk}=\beta'_{kj}$. By \cite{MagSigSig:2023}, Lemma {4.1}, 
    for every $z\in \C^{*n}$, the image of the
map $\Upsilon_z\colon \C^{(n-\ell)*}\to \C^{*n}$, $\Upsilon_z(t)=(z_1 t^{\beta_1},\dots, z_nt^{\beta_n})$ is  precisely the fiber of $F_L$ that contains $z$, all of which is contained in $\widehat{\{z\}}{}^S$. For any point $w\in \C^{*n}$ outside this fiber, that is satisfying $F_L(w)\neq F_L(z)$, there exists by Proposition \ref{monomials_separate_points} a monomial $p'\in \P^{T}(\C^\ell)$ such that $p'(F_L(w))\neq p'(F_L(z))$, which implies that $w\notin \widehat{\{z\}}{}^S$. We summarize:

 \begin{proposition}\label{Sempty_interior=>all_hulls_unbounded}
     If $S$ has empty interior, then for any point $z\in \C^{*n}$ we have $$\widehat{\{z\}}{}^S\cap \C^{*n}= F_L^{-1}(F_L(z))=\Upsilon_z[\C^{(n-\ell)*}].$$ Furthermore, the $S$-hull of any $K\subset \C^n$ that intersects $\C^{*n}$ is unbounded.
 \end{proposition}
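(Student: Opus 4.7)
The plan is to establish the three identifications in turn, relying entirely on the factorization structure recalled just before the statement. I will split the first equality into two inclusions and then deduce the unboundedness claim from monotonicity of $S$-hulls.

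For $\Upsilon_z[\C^{(n-\ell)*}] \subset \widehat{\{z\}}^S \cap \C^{*n}$, I will invoke that every $p\in \P^S_m(\C^n)$ factors as $p=p'\circ F_L$ with $p'\in \P^T_m(\C^\ell)$, together with Lemma 4.1 of [MagSigSig:2023] identifying the fiber $F_L^{-1}(F_L(z))$ with $\Upsilon_z[\C^{(n-\ell)*}]$. Any such $p$ is then constant on the fiber, so $|p(w)|=|p(z)|\le \|p\|_{\{z\}}$ for every $w$ in it.

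For the reverse inclusion I will take $w\in \C^{*n}$ with $F_L(w)\neq F_L(z)$ and construct $P\in \P^S(\C^n)$ witnessing $w\notin \widehat{\{z\}}^S$. Since $T=L^{-1}(S)$ is a convex body in $\R^\ell$, Proposition \ref{monomials_separate_points} applied in $\C^\ell$ supplies a monomial $p'\in \P^T(\C^\ell)$ with $a:=p'(F_L(z))\neq p'(F_L(w))=:b$. A bare inequality $a\neq b$ need not give a strict norm inequality, so I will set $P:=p'\circ F_L-a$; this lies in $\P^S(\C^n)$ because $0\in S$ makes constants admissible, and $P(z)=0$ while $|P(w)|=|b-a|>0$, forcing $w\notin\widehat{\{z\}}^S$.

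Finally, if $z\in K\cap\C^{*n}$ then monotonicity of $S$-hulls in $K$ gives $\Upsilon_z[\C^{(n-\ell)*}]\subset \widehat{\{z\}}^S\subset \widehat{K}^S$. Since $S$ has empty interior, $n-\ell\ge 1$, so the lattice generated by $\beta'_1,\dots,\beta'_{n-\ell}$ is nontrivial and at least one column $\beta_j\in \N^{n-\ell}$ of the matrix $(\beta'_{kj})$ is nonzero; picking a coordinate $k$ with $\beta_{j,k}\neq 0$ and letting $|t_k|\to\infty$ with the other components of $t$ fixed makes $|\Upsilon_z(t)_j|=|z_j|\,|t|^{\beta_j}$ unbounded, so $\widehat{K}^S$ is unbounded. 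The step requiring the most care is the constant-subtraction trick used to construct $P$, which is what upgrades bare separation by a monomial to the strict-inequality witness demanded by the definition of the $S$-hull, thereby avoiding a case analysis on whether $|a|<|b|$ or $|a|>|b|$.
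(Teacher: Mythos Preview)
Your proof is correct and follows essentially the same route as the paper, whose argument is laid out in the paragraphs immediately preceding the proposition rather than in a separate proof block: the fiber inclusion via the factorization $p=p'\circ F_L$, the parameterization of the fiber by $\Upsilon_z$ via Lemma~4.1 of \cite{MagSigSig:2023}, and the reverse inclusion via Proposition~\ref{monomials_separate_points} applied to the convex body $T$ in $\C^\ell$. Your constant-subtraction trick $P=p'\circ F_L-a$ is a genuine clarification of a step the paper leaves implicit (``which implies that $w\notin\widehat{\{z\}}{}^S$''), and your explicit computation showing $\Upsilon_z[\C^{(n-\ell)*}]$ is unbounded spells out what the paper states as ``an unbounded $(n-\ell)$-dimensional submanifold''.
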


\section[$S$-hulls of Reinhardt sets]{$S$-hulls of Reinhardt sets}\label{sec:Shulls_of_Reinhardt_sets}

The modulus of monomials is simply
$|z^\alpha|= e^{\scalar{\alpha}{\Log\,z}},$ $z\in \C^{*n}.$ In logarithmic coordinates, $x=\Log\,z$, the sublevel sets of monomials are halfspaces. We note that $$\widehat K^S\subset \set{z\in \C^n}{|z^\alpha|\leq \sup_{w\in K}|w^\alpha| \text{ for all }\alpha\in \R_+S\cap\N^n},$$
and letting $\Gamma=\R_+S$, we have
\begin{equation}\label{Log_hat_K^S_inclusion}
    \Log[\widehat K^S\cap \C^{*n}]\subset \set{x\in \R^n}{\scalar x \alpha  \leq 
    \varphi_A(\alpha) \text{ for all }\alpha\in \R_+S\cap\N^n}.
\end{equation}
If $\overline{S\cap\Q^n}= S$, as we are assuming throughout this paper,
the right hand side can be taken with $\alpha$ from $\R_+S\cap \Q^n$, which is dense in the cone $\Gamma=\R_+S$ . Then the right hand side equals
\begin{equation}\label{hatGamma}
    \widehat{A}_\Gamma= \set{x\in \R^{n}}{\langle x, \xi\rangle \leq \varphi_A(\xi) \text{ for all } \xi \in \Gamma}.
\end{equation}
where $\varphi_A(\xi)=\sup_{a\in A} \scalar a \xi$. Note that this definition of $ \widehat{A}_\Gamma$ differs from  \cite{MagSigSigSno:2023}, Definition 5.5
which is $\widehat{A}_\Gamma\cap \R^n_+$. The inclusion \eqref{Log_hat_K^S_inclusion} turns out to be an equality for a certain class of sets.

\begin{definition}
    We say that $K\subset \C^n$ is a \emph{Reinhardt set} if for all $z\in K$ and $\zeta\in \T^n$ then $\zeta z=(\zeta_1z_1,\dots, \zeta_nz_n)\in K$.  A \emph{Reinhardt domain} is a domain that is a Reinhardt set and contains the origin.
\end{definition}

As mentioned in the introduction, $\widehat{K}^S\cap \C^{*n}=\set{z\in \C^{*n}}{V^S_K(z)=0}$, so identifying  $V^S_K$ is helpful to find the $S$-hull of a set $K$. We will obtain a complete characterization of $\widehat K^S$ for Reinhardt sets from  Propositions \ref{conehull} and \ref{piJKJ}. Reinhardt sets $K$ have the 
property that if $u\in \PSH(\C^n)$ has $u|_K\leq 0$, then 
\begin{equation*}
    u'(z)= \sup_{\zeta\in \T^n}u(\zeta_1z_1,\dots, \zeta_n z_n)
\end{equation*}
is plurisubharmonic, rotationally symmetric in each coordinate, satisfies $u'|_K\leq 0$ and $u'\geq u$. This implies that $V^S_K$ can be taken as the supremum over the rotationally symmetric $u\in \L^S(\C^n)$ with $u|_K\leq 0$. 

\medskip

If $u\in \PSH(\C^n)$ is rotationally symmetric, then $v\colon \R^n\to \R$, $v(\xi)=u(e^\xi)$ is a convex function that is increasing in each variable. The convexity of $v$ can best be seen by taking a sequence $u_\ell\searrow u$ of $u_\ell\in \mathcal{C}^2\cap\PSH(\C^n)$. By replacing $u_\ell$ by $\sup_{\zeta\in \T^n}u_\ell( \zeta z)$, we may assume $u_\ell$ are also rotationally symmetric. Taking $v_\ell(\xi)=u_\ell(e^\xi)$ we observe that 
\begin{equation*}\label{eq:Second_derivatives_u_delta_v_delta}
    \frac{\partial^2 u_\ell(z)}{\partial z_j \partial \overline{z}_k}=\frac{1}{4z_j\overline{z}_k}\cdot\frac{\partial^2 v_\ell(\xi)}{\partial \xi_j \partial \xi_k}, \qquad z\in \C^{*n},
\end{equation*}
which implies that
\begin{equation}\label{eq:detHesse}
    \det\left(\frac{\partial^2 u_\ell(z)}{\partial z_j \partial \overline{z}_k}\right)= \frac{1}{4^n |z_1\cdots z_n|^2}\det \left(\frac{\partial^2 v_\ell(\xi)}{\partial \xi_j \partial \xi_k}\right)\bigg|_{\xi=\Log\,z\, ,}
\end{equation}
and the positivity of one side of this equation implies the positivity of the other. Therefore $u_\ell\in \PSH(\C^n)$ implies that $v_\ell$ is convex. The limit $v$ is therefore also convex. Let $\operatorname{Conv}(\R^n)$ denote the set of convex functions on $\R^n$ that are increasing in each variable. We have described a bijection between rotationally symmetric $\PSH(\C^n)$ functions and $\operatorname{Conv}(\R^n)$ since $u\in \PSH(\C^n)$ can be recovered by $u(z)=v(\Log\, z)$. If  $u\in \L^S(\C^n)$, then $v\leq \varphi_S + \text{\emph{constant}}$, which can be denoted by $v\preccurlyeq \varphi_S$. If $u\leq 0$ on $K$ then $v\leq 0$ on $A=\Log\, K$, which can be rephrased by $v\leq \chi_A$, where $\chi_A(\xi)=1$ if $\xi\in A$ and $\chi_A(\xi)=+\infty$ otherwise. If $\overline{K\cap\C^{*n}}=K$, then $v\leq \chi_A$ also implies that $u|_K\leq 0$, so in that case we have:
\begin{equation}\label{VSK(exi)}
    V^S_K(e^\xi)=\sup \{ v(\xi)\,;\, v\in \operatorname{Conv}(\R^n),\, v\leq \chi_{A} \text{ and }v\preccurlyeq \varphi_S \}.
\end{equation}

We will study the right hand side through the  {Legendre-Fenchel transform}, also known as the convex conjugate. The \emph{Legendre-Fenchel transform} 
of a function $\mu\colon \R^n\to\overline{\R}=[-\infty,+\infty]$ is the function 
\begin{align*}
    \Lg(\mu)&\colon \R^n\to \overline{\R}\\[5pt]
    \Lg(\mu)(\xi)&= \sup_{x\in \R^n}\scalar{\xi}{x}-\mu(x).
\end{align*}
It is easy to see that $\Lg(\chi_E)=\varphi_E$ and  $\Lg(\varphi_E)= \chi_{\ch E}$ where $\ch(E)$ denotes the closed convex hull of $E$. 
To handle unbounded inputs, we extend the summation on $\R$ to a symmetric operation $\dotplus\colon \overline{\R}\times \overline{\R}\to \overline{\R}$ by 
$$ (+\infty)\dotplus (-\infty)=+\infty, \qquad  (+\infty)\dotplus a=+\infty,  \qquad (-\infty)\dotplus a=-\infty,  \qquad \text{ for all }a\in \R.$$
It is well known that for any $\mu\colon \R^n\to \overline{\R}$, then $\Lg^2(\mu):= \Lg(\Lg(\mu))$ is the largest lower-semicontinuous convex function that is $\leq \mu$, so in that spirit we obtain:

\bigskip

\textbf{Proof of Proposition \ref{V^S_KformulaReinhardt}:}
First of all, $$\Lg(\Lg(\chi_A)\dotplus\chi_S)= \sup_{s\in S} \big(\scalar{s}{\cdot}-\varphi_A(s)\big) 
$$ is smaller than the right hand side of \eqref{VSK(exi)}. Let now $v\in \Conv(\R^n)$ have $v\leq \chi_A$ and $v\leq \varphi_S+c$. Now $\Lg$ is order-reversing so 
    \begin{align*}
        \Lg(v)\geq \Lg(\varphi_S+c)=\chi_S-c,
    \end{align*}
    and therefore $\Lg(v)\dotplus\chi_S=\Lg(v)$. Now $\Lg^2=\Lg\circ\Lg$ preserves (lower semicontinuous) convex functions by \cite{Hormander:convexity}, Theorem 2.2.4,
    so 

    $$v= \Lg^2(v)= \Lg(\Lg(v)\dotplus\chi_S)\leq \Lg(\Lg(\chi_A)\dotplus\chi_S).$$
    which proves the theorem in the case when $K=\overline{K\cap \C^{*n}}$. The other case will be finished by Lemma \ref{V^S_K'|_K'=0}. \hfill $\square$

\begin{lemma}\label{V^S_K'|_K'=0}
Assume that $S\cap \R^{*n}_+\neq \emptyset$.  Let $K\subset\C^n$ be a compact Reinhardt set, and let $K'=\overline{K\cap \C^{*n}}$. Then $V^S_K|_{\C^{*n}}=V^S_{K'}|_{\C^{*n}}$ and $V^{S*}_{K'}|_{K'}=0$.
\end{lemma}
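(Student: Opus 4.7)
The plan is to prove the two conclusions in turn. The inequality $V^S_K \le V^S_{K'}$ holds on all of $\C^n$ simply because $K' \subset K$ makes the constraint $u|_K\le 0$ strictly stronger than $u|_{K'}\le 0$. For the reverse inequality on $\C^{*n}$, I would take any $u\in \L^S(\C^n)$ with $u|_{K'}\le 0$ and construct a family $u_\epsilon\in \L^S(\C^n)$ with $u_\epsilon|_K\le 0$ that converges to $u$ pointwise on $\C^{*n}$ as $\epsilon\to 0$. The key device is a plurisubharmonic barrier on the coordinate hyperplanes: using the hypothesis $S\cap \R^{*n}_+\neq\emptyset$, pick $s^0\in S$ with every coordinate strictly positive and set $\psi(z)=\scalar{s^0}{\Log\,z}=\sum_j s^0_j\log|z_j|$. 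Then $\psi$ is plurisubharmonic on $\C^n$, finite on $\C^{*n}$, equal to $-\infty$ on $\C^n\setminus \C^{*n}$ since every $s^0_j>0$, and satisfies $\psi\le H_S$, so $\psi\in \L^S(\C^n)$.

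With $M=\sup_{K'}\psi<\infty$, let $u_\epsilon=(1-\epsilon)u+\epsilon\psi-\epsilon M$ for $\epsilon\in(0,1)$. Convex combinations of plurisubharmonic functions are plurisubharmonic, and $u_\epsilon\le H_S+(1-\epsilon)c_u-\epsilon M$, so $u_\epsilon\in \L^S(\C^n)$. On $K'$ both $u\le 0$ and $\psi-M\le 0$ give $u_\epsilon\le 0$, while on $K\setminus K'\subset \C^n\setminus \C^{*n}$ the term $\psi=-\infty$ forces $u_\epsilon=-\infty$. Therefore $u_\epsilon\le V^S_K$, and letting $\epsilon\to 0$ at any $z\in \C^{*n}$ (where $\psi(z)$ is finite) yields $u(z)\le V^S_K(z)$. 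Taking the supremum over admissible $u$ proves $V^S_{K'}\le V^S_K$ on $\C^{*n}$.

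For the second assertion, observe that $K'=\overline{K'\cap\C^{*n}}$ by construction, so Proposition \ref{V^S_KformulaReinhardt} applies to $K'$ and gives $V^S_{K'}(w)=\sup_{s\in S}\bigl(\scalar{s}{\Log\,w}-\varphi_{A'}(s)\bigr)$ for $w\in \C^{*n}$, where $A'=\Log\,(K'\cap\C^{*n})$. For $z\in K'\cap \C^{*n}$, $\Log\,z\in A'$ makes each summand $\le 0$, while the zero competitor gives $V^S_{K'}(z)\ge 0$, so $V^S_{K'}(z)=0$; continuity of the formula in $w\in \C^{*n}$ and the fact that small neighborhoods of $z$ lie in $\C^{*n}$ give $V^{S*}_{K'}(z)=0$. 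For $z\in K'\setminus \C^{*n}$, I would use that $\C^n\setminus \C^{*n}$ is pluripolar, so every admissible $u$ satisfies $u(z)=\limsup_{w\to z,\,w\in \C^{*n}} u(w)$; swapping $\sup$ and $\limsup$ then yields $V^{S*}_{K'}(z)=\limsup_{w\to z,\,w\in \C^{*n}} V^S_{K'}(w)$. Writing $J=\{j:z_j=0\}$, the $s\in S$ with $s_j>0$ for some $j\in J$ contribute $-\infty$ in the limit since $\scalar{s}{\Log\,w}\to-\infty$ along $w\to z$ in $\C^{*n}$; for $s\in S$ with $s_j=0$ on $J$, choosing $z^{(k)}\in K\cap \C^{*n}$ with $z^{(k)}\to z$ gives $\scalar{s}{\Log\,z^{(k)}}\to\scalar{s}{\Log\,z}$, and the left side is bounded by $\varphi_{A'}(s)$. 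Hence $V^{S*}_{K'}(z)\le 0$, and together with the trivial bound $V^{S*}_{K'}\ge V^S_{K'}\ge 0$ we conclude $V^{S*}_{K'}(z)=0$.

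The main obstacle is the construction in the first assertion: one needs a plurisubharmonic function that is dominated by $H_S$ up to a constant (to remain in $\L^S$), finite on $\C^{*n}$ (so that $u_\epsilon\to u$ there), and $-\infty$ on every coordinate hyperplane (to automatically make $u_\epsilon\le 0$ on $K\setminus K'$). The hypothesis $S\cap \R^{*n}_+\neq\emptyset$ is precisely what makes such a $\psi$ available, via the monomial weight with strictly positive exponents $s^0$.
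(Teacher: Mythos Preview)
Your argument for the first assertion is essentially the paper's: both use the barrier $\psi(z)=\scalar{s^0}{\Log z}$ with $s^0\in S\cap\R^{*n}_+$ and take convex combinations $(1-\epsilon)u+\epsilon\psi$ to push an admissible competitor for $K'$ down to one for $K$. (A small caveat: you should dispose of the degenerate case $K\cap\C^{*n}=\emptyset$ separately, since then $M=\sup_{K'}\psi$ is undefined; but then $\psi+M$ is admissible for $K$ for every $M$, so $V^S_K\equiv+\infty$ on $\C^{*n}$ and the claim is trivial.)

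For the second assertion you take a genuinely different route from the paper. The paper enlarges $K'$ to the union of closed polydiscs $\widetilde K=\bigcup_{z\in K\cap\C^{*n}}D_z$, observes that rotationally symmetric competitors for $K'$ are automatically competitors for $\widetilde K$ (so $V^S_{K'}\le V^S_{\widetilde K}$), and then quotes local $\mathcal L$-regularity of $\widetilde K$ from \cite{MagSigSigSno:2023} to get $V^{S*}_{\widetilde K}|_{\widetilde K}=0$. Your idea---to read the conclusion directly off the explicit formula of Proposition~\ref{V^S_KformulaReinhardt} applied to $K'$---is attractive and not circular, since the case $K'=\overline{K'\cap\C^{*n}}$ of that proposition is proved before the present lemma is invoked. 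For $z\in K'\cap\C^{*n}$ your argument is fine: the formula is a supremum over the compact set $S$ of functions that are equicontinuous in $w$ on compacta of $\C^{*n}$, hence continuous there.

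The gap is at $z\in K'\setminus\C^{*n}$. Two steps fail as written. First, ``swapping $\sup$ and $\limsup$'' to get $V^{S*}_{K'}(z)=\limsup_{w\to z,\,w\in\C^{*n}}V^S_{K'}(w)$ is not justified: the fact that each individual psh competitor $u$ satisfies $u(z)=\limsup_{\C^{*n}\ni w\to z}u(w)$ does not let you commute the supremum over $u$ with the $\limsup$ over $w$. Second, your splitting of $S$ is not enough: for each \emph{fixed} $s$ with $s_j>0$ for some $j\in J$ one has $\scalar{s}{\Log w}\to-\infty$, but the \emph{supremum} over such $s$ need not tend to $-\infty$, because as $w\to z$ the maximizing $s$ can drift toward the face $\{s_j=0:j\in J\}$. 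A clean fix: pick $z^{(k)}\in K\cap\C^{*n}$ with $z^{(k)}\to z$ and note that for any $\epsilon>0$ one can choose $k$ and then a neighbourhood of $z$ in $\C^{*n}$ on which $\log|w_j|\le\log|z^{(k)}_j|+2\epsilon$ for $j\notin J$ and $\log|w_j|\le\log|z^{(k)}_j|$ for $j\in J$; since $s\in\R^n_+$ this gives $\scalar{s}{\Log w}\le\scalar{s}{\Log z^{(k)}}+C\epsilon\le\varphi_{A'}(s)+C\epsilon$ \emph{uniformly in $s\in S$}, whence $g(w)\le C\epsilon$. This shows the right-hand side of the formula extends upper-semicontinuously to a function $G\in\L^S(\C^n)$ with $G|_{K'}\le 0$; then $V^S_{K'}=G$ everywhere (each admissible $u$ is $\le G$ on $\C^{*n}$, hence on $\C^n$), $G$ is already psh, and $V^{S*}_{K'}=G\le 0$ on $K'$.
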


\begin{proof} 
    Clearly $V^S_K\leq V^S_{K'}$. Let $s\in S\cap \R^{*n}_+$. The function $v(z)= \scalar{s}{\Log\,z}\in \L^S(\C^n)$ takes the value $-\infty$ on $\C^n\setminus \C^{*n}$ and but values in $\R$ on $\C^{*n}$. 
    
    Let $u\in \L^S(\C^n)$ be such that $u|_{K'}\leq 0$ and let $\varepsilon>0$. 
    Then $(1-\eta)u+\eta v\in \L^S(\C^n)$  is $\leq \varepsilon$ on $K$ for all $\eta>0$ small enough. Therefore $u\leq V^S_K+\varepsilon$ on $\C^{*n}$, and since that is true for all $\varepsilon>0$, we have $u\leq V^S_K$ on $\C^{*n}$. We conclude that $V^S_{K'}=V^S_K$ on $\C^{*n}$. 

     If $u\in \PSH(\C^n)$ is rotationally symmetric and $u(z)\leq 0$ for some point $z\in \C^{*n}$, then $u\leq 0$ on the polydisc $D_z=\set{w\in \C^n}{|w_j|\leq|z_j|,\,j=1,\dots, n}$. This implies that for $\widetilde K=\bigcup_{z\in K\cap\C^{*n}} D_z$, we have  $V^S_{K'} \leq V^S_{\widetilde K}$ on $\C^n$. Furthermore, $V^{S}_{K}|_{\C^{*n}}=  V^{S}_{K'}|_{\C^{*n}}\geq V^{S}_{\widetilde K}|_{\C^{*n}}$, concluding the proof of the first statement.

Each $w\in \widetilde K$ is a member of $D_z$ for some $z\in \C^{*n}$, and $D_z$ is the unit ball in some norm. Lemma 5.2
in \cite{MagSigSigSno:2023} therefore implies that $\widetilde K$ is locally $\L$-regular. By \cite{MagSigSigSno:2023}, Proposition 5.3 and 5.4
$V^{S*}_{\widetilde K}|_{\widetilde K}=0$, from which follows that $V^{S*}_{K'}|_{K'}=0$.
\textcolor{white}{.}
\end{proof}

\medskip

The identity $\widehat{K}^S\cap \C^{*n}=\set{z\in \C^{*n}}{V^S_K(z)=0}$ now provides a corollary:
\begin{corollary}\label{conehull}
    Assume that $S\cap \Q^{*n}_+\neq \emptyset$ or $K=\overline{K\cap \C^{*n}}$.   Let $K$ be a compact Reinhardt subset of $\C^{n}$, $A=\Log(K\cap \C^{*n})$ and $\Gamma=\R_+S$.
    Then  $$\widehat{K}^S\cap \C^{*n}= \Log^{-1}\widehat{A}_\Gamma.$$ 
    
\end{corollary}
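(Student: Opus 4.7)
The plan is to combine the two facts the excerpt has already made available: the identification $\widehat{K}^S\cap\C^{*n}=\{z\in\C^{*n}\,;\,V^S_K(z)=0\}$ and the explicit formula for $V^S_K$ supplied by Proposition \ref{V^S_KformulaReinhardt}. Since both hypothesis branches of the corollary are exactly the hypothesis of that proposition, no additional preparation is needed to invoke it.

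First I would observe that $V^S_K\geq 0$ always, because $0\in S$ forces the zero function to lie in $\L^S(\C^n)$, and it vanishes on $K$. Combined with the formula
\[
V^S_K(z)=\sup_{s\in S}\bigl(\langle s,\Log z\rangle-\varphi_A(s)\bigr),
\]
this gives the equivalence
\[
V^S_K(z)=0\ \Longleftrightarrow\ \langle s,\Log z\rangle\leq \varphi_A(s)\ \text{for every}\ s\in S.
\]
Thus for $z\in\C^{*n}$, membership of $z$ in $\widehat K^S$ is equivalent to the displayed inequality holding for every $s\in S$.

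Next I would extend from $S$ to the whole cone $\Gamma=\R_+S$ by positive homogeneity. The support function $\varphi_A$ is positively homogeneous of degree one, so for any $t\geq 0$ and $s\in S$ the inequality $\langle s,\Log z\rangle\leq\varphi_A(s)$ multiplied by $t$ is exactly $\langle ts,\Log z\rangle\leq\varphi_A(ts)$. Since every $\xi\in\Gamma$ has this form, the condition on all of $S$ is equivalent to the condition on all of $\Gamma$, which is precisely $\Log z\in\widehat{A}_\Gamma$ as defined in \eqref{hatGamma}. This gives the two inclusions and finishes the proof.

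The only subtlety is the degenerate case $K\cap\C^{*n}=\emptyset$. Under the hypothesis $K=\overline{K\cap\C^{*n}}$ this forces $K=\emptyset$, and both sides of the claimed equality are empty; under the hypothesis $S\cap\Q^{*n}_+\neq\emptyset$, Lemma \ref{V^S_K'|_K'=0} reduces to the set $K'=\overline{K\cap\C^{*n}}=\emptyset$ on $\C^{*n}$, so $V^S_K\equiv+\infty$ there, while $\varphi_A\equiv -\infty$ makes $\widehat{A}_\Gamma$ empty as well. I expect no substantive obstacle beyond keeping track of this trivial case; the body of the argument is just a translation between the two equivalent sup-formulations that Proposition \ref{V^S_KformulaReinhardt} and the definition \eqref{hatGamma} put on the table.
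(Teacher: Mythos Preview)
Your proof is correct and is precisely the argument the paper has in mind: the paper presents the corollary immediately after Proposition~\ref{V^S_KformulaReinhardt} with only the remark that the identity $\widehat{K}^S\cap\C^{*n}=\{z\in\C^{*n}\,;\,V^S_K(z)=0\}$ ``now provides a corollary,'' leaving the short computation you wrote out (nonnegativity, the sup formula, and the homogeneity step from $S$ to $\Gamma$) to the reader. One tiny remark: the corollary's hypothesis $S\cap\Q^{*n}_+\neq\emptyset$ is formally stronger than the proposition's $S\cap\R^{*n}_+\neq\emptyset$, so they are not ``exactly'' the same, but the implication goes the right way and your invocation of Proposition~\ref{V^S_KformulaReinhardt} is valid.
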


\medskip

Now $\widehat{A}_\Gamma$ can be described in terms of the dual cone $$\Gamma^\circ= \set{x\in \R^n}{\scalar x\xi \geq 0 \quad\forall \xi \in \Gamma}.$$
The dual cone of $\Gamma=\R_+S$ is $\Gamma^\circ=-\mathcal{N}(\varphi_S)=\set{\xi\in \R^n}{\varphi_S(-\xi)=0}$. 
The following is a slight modification of \cite{MagSigSigSno:2023}, Proposition 5.6.

\begin{proposition}\label{-dualcone}
Let $A$ be a subset of $\R^n$ with $0\in S$ and
$\Gamma$ be a proper closed convex  cone.  Then 
\begin{equation*}
\widehat A_\Gamma=\operatorname{ch}A-\Gamma^\circ.
\end{equation*} 
\end{proposition}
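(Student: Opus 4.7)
The plan is to establish the two inclusions separately. The inclusion $\operatorname{ch}A - \Gamma^\circ \subset \widehat{A}_\Gamma$ is a direct unpacking of definitions, whereas the reverse inclusion $\widehat{A}_\Gamma \subset \operatorname{ch}A - \Gamma^\circ$ calls for a Hahn--Banach separation argument combined with the bipolar theorem $(\Gamma^\circ)^\circ = \Gamma$ for closed convex cones.

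For the easy inclusion, I would take $x = a - y$ with $a \in \operatorname{ch}A$ and $y \in \Gamma^\circ$, and any $\xi \in \Gamma$. Since the supporting function of a set and that of its closed convex hull coincide, $\langle a, \xi \rangle \leq \varphi_A(\xi)$, and by definition of $\Gamma^\circ$ we have $\langle y, \xi \rangle \geq 0$. Subtracting gives $\langle x, \xi \rangle \leq \varphi_A(\xi)$, so $x \in \widehat{A}_\Gamma$ by \eqref{hatGamma}.

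For the reverse inclusion, I would argue by contradiction. Suppose $x \in \widehat{A}_\Gamma$ but $x \notin \operatorname{ch}A - \Gamma^\circ$. Under the (standing) assumption that $A$ is bounded, $\operatorname{ch}A$ is compact, so the Minkowski sum $\operatorname{ch}A - \Gamma^\circ$ is a closed convex set. Hahn--Banach strict separation then supplies a nonzero $\xi \in \R^n$ and a constant $c$ with $\langle a - y, \xi \rangle \leq c < \langle x, \xi \rangle$ for all $a \in \operatorname{ch}A$ and $y \in \Gamma^\circ$. Since $\Gamma^\circ$ is a cone, replacing $y$ by $ty$ and letting $t \to +\infty$ forces $\langle y, \xi \rangle \geq 0$ for every $y \in \Gamma^\circ$, i.e.\ $\xi \in (\Gamma^\circ)^\circ$, which by the bipolar theorem is equal to $\Gamma$. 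Taking $y = 0$ then gives $\varphi_A(\xi) \leq c < \langle x, \xi \rangle$ with $\xi \in \Gamma$, contradicting $x \in \widehat{A}_\Gamma$.

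The main technical point is the closedness of $\operatorname{ch}A - \Gamma^\circ$, since Hahn--Banach strict separation of a point from a convex set demands the latter be closed. When $A$ is bounded this follows from compactness of $\operatorname{ch}A$ and closedness of $\Gamma^\circ$, and in the paper's applications, notably Corollary \ref{conehull} with $A = \Log(K \cap \C^{*n})$ for a compact Reinhardt $K \subset \C^{*n}$, this boundedness is automatic. Were $A$ not assumed bounded, the Minkowski sum may fail to be closed and the best one could assert is $\widehat{A}_\Gamma = \overline{\operatorname{ch}A - \Gamma^\circ}$, in line with \cite{MagSigSigSno:2023}, Proposition 5.6.
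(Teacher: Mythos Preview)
Your proof is correct and follows essentially the same route as the paper's: both inclusions are handled identically, with the reverse inclusion obtained via Hahn--Banach separation and the bipolar identity $\Gamma^{\circ\circ}=\Gamma$. You are in fact more careful than the paper in flagging that strict separation requires $\operatorname{ch}A-\Gamma^\circ$ to be closed, which the paper invokes without comment; your observation that boundedness of $A$ secures this (and suffices for all applications in the paper, where $A=\Log(K\cap\C^{*n})$ for compact $K$) is well taken.
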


\begin{proof}  Take 
$a\in \operatorname{ch}A$ and $t\in \Gamma^\circ$ and let $x=a-t$.   For every $\xi\in \Gamma$
we have $\scalar t\xi\geq 0$ which implies 
$\scalar x\xi=\scalar a\xi-\scalar t\xi\leq \varphi_A(\xi)$
and $a\in \widehat A_\Gamma$.  

Conversely, we take  $x\not\in \operatorname{ch}A-\Gamma^\circ$ and prove
that $x\not \in \widehat A_\Gamma$.    
Since $\operatorname{ch}A-\Gamma^\circ$ is convex
the Hahn-Banach theorem implies that
$\{x\}$ and $\operatorname{ch}A-\Gamma^\circ$ can be separated
by an affine hyperplane.   Hence 
there exist $\xi\in \R^n$ and $c\in \R$ such
that $\scalar x\xi>c$ and $\scalar a\xi\leq c$ for every $a\in \operatorname{ch}A-\Gamma^\circ$.
By replacing $c$ with $\sup_{a\in \operatorname{ch}A-\Gamma^\circ}\scalar a\xi$
we may assume
there exists $a\in \operatorname{ch}A$ and $t\in \Gamma^\circ$ with
$\scalar {a-t}\xi=c$. Now we need to prove that
$\xi\in \Gamma=\Gamma^{\circ\circ}$ by showing that  $\scalar y\xi\geq
0$ for every $y\in \Gamma^\circ$.    Since $\Gamma^\circ$ is a convex
cone, we have $t+y\in \Gamma^\circ$ and 
$c-\scalar y\xi=\scalar{a-t-y}\xi \leq c$. Hence
$\scalar y\xi\geq 0$.
This implies that  $\scalar x\xi>c\geq \varphi_{S}(\xi)$ and we
conclude that  $a\not\in \widehat A_\Gamma$.  
\end{proof}

\bigskip

Several times in the next section will we use the fact that if $K$ is a polycircle with polyradius $ e^\varrho\in \R^{*n}_+$, then $\widehat{K}^S\cap\C^{*n}=  \Log^{-1}\widehat{\{\varrho\}}_\Gamma  = \Log^{-1}(\varrho +\mathcal{N}(\varphi_S))$, by Proposition \ref{-dualcone}. We can also describe $\widehat K^S$ for a Reinhardt $K$ on $\C^n\setminus\C^{*n}$,  though the description will not be as crisp as in Corollary \ref{conehull}. 
 The description will be in terms of the non-zero coordinates $J\subset [n]$ of the points. For that we use notation laid out in Section 2, and let $K_J=\pi_J(K)$.

\begin{proposition}\label{piJKJ}
    Let $K\subset \C^n$ be a compact Reinhardt set. For every ordered subset $J\subset\{1,\dots, n\}$ and every $z\in \C^J$ we have  $V^S_K(z)= V^{S_J}_{K_J}(\pi_J(z))$ and  $\Phi^S_K(z)= \Phi^{S_J}_{K_J}(\pi_J(z))$. Furthermore, $\pi_J(\widehat{K}^S\cap \C^{J})= {\widehat{K_J}}{}^{S_J}$. 
\end{proposition}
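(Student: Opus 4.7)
The plan is to establish the two extremal‐function identities $\Phi^S_K(z)=\Phi^{S_J}_{K_J}(\pi_J(z))$ and $V^S_K(z)=V^{S_J}_{K_J}(\pi_J(z))$ on $\C^J$ independently, and then deduce the hull statement as a corollary using $\widehat K^S=\{z:\Phi^S_K(z)\leq 1\}$ together with the fact that $\pi_J$ restricts to a bijection $\C^J\to\C^\ell$ with inverse $\iota_J\colon\C^\ell\to\C^J$ (insertion of zeros outside $J$).

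For the $\Phi$-identity, the inequality $\Phi^S_K(z)\geq \Phi^{S_J}_{K_J}(\pi_J(z))$ is obtained by pullback: given $q\in\P^{S_J}_m(\C^\ell)$ with $\|q\|_{K_J}\leq 1$, the polynomial $p:=q\circ\pi_J$ lies in $\P^S_m(\C^n)$ (its exponents sit in $mS\cap\N^J\subseteq mS\cap\N^n$, since $\pi_J\colon\R^J\to\R^\ell$ is a bijection carrying $S\cap\R^J$ to $S_J$) and satisfies $\|p\|_K=\|q\|_{K_J}$. For the reverse inequality, given $p=\sum_{\alpha\in mS\cap\N^n}a_\alpha z^\alpha\in\P^S_m(\C^n)$ with $\|p\|_K\leq 1$, I would set
\[
q(w):=\sum_{\alpha\in mS\cap\N^J}a_\alpha w^{\pi_J(\alpha)},
\]
which lies in $\P^{S_J}_m(\C^\ell)$ and satisfies $q\circ\pi_J=p$ on $\C^J$. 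The key bound $\|q\|_{K_J}\leq\|p\|_K$ comes from the Reinhardt‐averaging identity
\[
q(\pi_J(z)) \;=\; \frac{1}{(2\pi)^{n-\ell}} \int_{[0,2\pi]^{n-\ell}} p(\zeta(\theta)z)\, d\theta, \qquad z\in K,
\]
where $\zeta(\theta)\in\T^n$ has $\zeta_j=1$ for $j\in J$ and $\zeta_j=e^{i\theta_j}$ for $j\notin J$; since $\zeta(\theta)z\in K$ by the Reinhardt property, the integrand is bounded in modulus by $\|p\|_K$.

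For the $V$-identity I would pass to rotationally symmetric competitors, which is permitted because $K$ is Reinhardt. The inequality $V^S_K\geq V^{S_J}_{K_J}\circ\pi_J$ holds on all of $\C^n$: for $v\in\L^{S_J}(\C^\ell)$ admissible in the defining supremum, the pullback $v\circ\pi_J$ is plurisubharmonic, lies in $\L^S(\C^n)$ since $H_{S_J}\circ\pi_J\leq H_S$ (the relevant supremum in $\varphi_{S_J}\circ\pi_J$ is taken over the subset $S\cap\R^J$ of $S$), and vanishes on $K$ because $\pi_J(K)=K_J$. For the reverse inequality on $\C^J$, take a rotationally symmetric admissible $u$ and define $\tilde u(w):=u(\iota_J(w))$. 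Then $\tilde u$ is plurisubharmonic on $\C^\ell$, and equation \eqref{H_S(z)=H_S_J(pi_J(z)).} applied at $z=\iota_J(w)$ gives $\tilde u(w)\leq H_S(\iota_J(w))+c_u = H_{S_J}(w)+c_u$, so $\tilde u\in\L^{S_J}(\C^\ell)$.

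The main obstacle is verifying $\tilde u|_{K_J}\leq 0$: the Reinhardt action of $\T^n$ preserves moduli and so cannot bring a general point of $K$ onto $\iota_J(w)$. I would resolve this by invoking the coordinate‐wise monotonicity established in the paper: for rotationally symmetric PSH $u$, the convex function $v(\xi)=u(e^\xi)$ is increasing in each variable on $\R^n$, and this monotonicity extends through the upper semicontinuous regularization at points with zero coordinates via the monotone limits $\lim_{\xi_j\to-\infty}v(\xi)$. Thus for $w\in K_J$ with any lift $z\in K$ satisfying $\pi_J(z)=w$, the moduli of $\iota_J(w)$ are dominated coordinate‐wise by those of $z$ (equal on $J$, zero versus nonnegative off $J$), whence $u(\iota_J(w))\leq u(z)\leq 0$. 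Therefore $\tilde u$ is admissible for $V^{S_J}_{K_J}$, and for $z\in\C^J$ we obtain $u(z)=\tilde u(\pi_J(z))\leq V^{S_J}_{K_J}(\pi_J(z))$; taking the supremum over $u$ completes the identity. The hull statement then follows at once from
\[
\pi_J(\widehat K^S\cap\C^J) \;=\; \{w\in\C^\ell:\Phi^{S_J}_{K_J}(w)\leq 1\} \;=\; \widehat{K_J}{}^{S_J}.
\]
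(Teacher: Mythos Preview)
Your argument is correct and structurally parallel to the paper's: both prove the inequality $\geq$ by pulling back along $\pi_J$ and the inequality $\leq$ by restricting along $\iota_J$. The substantive step in the $\leq$ direction is to show that the restricted competitor (whether $q=p\circ\iota_J$ or $\tilde u=u\circ\iota_J$) still satisfies the constraint on $K_J$, and here the methods diverge. The paper handles both cases at once by the single observation that $\iota_J(K_J)\subset\widehat K$ (the ordinary polynomial hull): since $K$ is Reinhardt, any $(w',w'')\in K$ has the whole polydisc $\{w'\}\times\prod_{j>\ell}\overline{\mathbb D}(0,|w_j''|)$ inside $\widehat K$, in particular $(w',0)\in\widehat K$; hence any admissible $p$ or $u$ is already controlled at $\iota_J(w')$, with no need to symmetrize $u$ first. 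You instead treat $\Phi$ and $V$ separately, using the torus--averaging identity for the polynomial bound and coordinatewise radial monotonicity of rotationally symmetric psh functions for the $V$ bound. Both routes are sound; the paper's hull observation is a bit more economical (it avoids the preliminary reduction to rotationally symmetric competitors and works for every $u\in\L^S(\C^n)$ directly), while your torus--averaging formula for $\Phi$ is pleasantly explicit and entirely self-contained.
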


\begin{proof}
    By rearranging the coordinates we may assume that $J=\{1,\dots, \ell\}$.  Regard a point $z\in \C^J$, so $z= (z',0)$ with $z'\in \C^\ell$. If $u'\in\L^{S_J}(\C^\ell)$ has $u'|_{K_J}\leq 0$, then $u(z',z'')=u'(z')$ defines member of $\L^S(\C^n)$ with $u|_K\leq 0$. Then $u'(z')=u(z',0)\leq V^S_K(z',0)$, implying that $V^{S_J}_{K_J}(z')\leq V^S_K(z',0)$.

    For $u\in \L^S(\C^n)$ with $u|_K\leq 0$, we regard $u'(z')=u(z',0)$. Proposition 3.3
    from \cite{MagSigSigSno:2023} then implies that $u'(z')\leq H_S(z',0)+c_{u}= H_{S_J}(z')+c_{u}$, so $u'\in \L^{S_J}(\C^\ell)$.  Since $K$ is Reinhardt,  $K_J\times\{0\}^{n-\ell}\subset \widehat K$, which implies that $u|_{K_J\times\{0\}^{n-\ell}}\leq 0$.  Then $u'|_{K_J}\leq 0$ and $u(z',0)=u'(z')\leq V^{S_J}_{K_J}(z')$. Therefore $V^S_K(z',0)\leq V^{S_J}_{K_J}(z')$.

    Repeat this argument except let $u=\log|p|^{1/m}$ with $p\in \P^S_m(\C^n)$  and let $u'=\log|p'|^{1/m}$ with $p'\in \P^{S_J}_m(\C^\ell)$ for some $m\in \N$. That  yields $\log \Phi ^S_K(z',0)= \log \Phi^{S_J}_{K_J}(z')$.     The last assertion then follows from the fact that $\widehat K^S= \set{z\in \C^n}{\Phi^S_K(z)=1}$.
\end{proof}
\bigskip

If $S\cap \R^{*n}_+= \emptyset$, then there is some largest $J\subset [n]$ of $\#J=\ell<n$ such that $S_J\cap\R^{*\ell}_+\neq \emptyset$. Then $V^S_K$ is independent of its variables from $[n]\setminus J$ and can be described by $V^S_K(z)=V^{S_J}_{K_J}(\pi_J(z))$, and the right hand side can be understood by Theorem \ref{V^S_KformulaReinhardt}.

\medskip

We can now determine the $S$-hull of the unit polydisc $\T^n$. By Corollary \ref{conehull}, we have $(\widehat{\mathbb T^n})^S\cap\C^{*n}=\set{z\in \C^{*n}}{H_S(z)=0}$. Proposition 3.3
from \cite{MagSigSigSno:2023} shows that for $a\in \C^J$ we have $H_S(a)=H_{S_J}(\pi_J(a))$. By Proposition \ref{piJKJ}, then $(\widehat{\mathbb T^n})^S=\set{z\in \C^{n}}{H_S(z)=0}$. Furthermore, this is an unbounded set if and only if $S$ is not a neighborhood of zero. Another implication of Proposition \ref{piJKJ} is that if $K$ is Reinhardt, then  $V^S_K=\log\Phi^S_K$ is true everywhere in $\C^n$.

\section[Approximation on Reinhardt sets]{Approximation on Reinhardt sets}\label{sec:Circled}

Theorem \ref{approx_on_E} will be proven over the course of a few propositions. The step (ii)$\Rightarrow$(iii) was motivated in the introduction, and the steps (iii)$\Rightarrow$(iv) and (i)$\Rightarrow$(ii) are clear. Next we prove (iv)$\Rightarrow$(v) with the simplification that $K$ contains a point from $\C^{*n}$. 
Then $K$ contains some polycircle in $\C^{*n}$ centered at 0, 
which implies that the values of $f$ on $K$ determine the function completely, which simplifies condition (v). This extra condition will be removed in Proposition \ref{circ_a_alpha=0_ekkiC^*n}. 
Holomorphic function defined by a convergent power series of the form $f(z)= \sum_{\alpha\in \Gamma} c_\alpha z^\alpha$, where $\Gamma=\R_+S$, have partial sums from $\P^S(\C^n)$ and they provide a uniform approximation of the holomorphic function on  compact subsets of its domain of convergence.

\medskip

The $S$-hull of a non-empty Reinhardt set $K$ is always a connected Reinhardt set that contains zero, 
so if $X$ is a neighborhood of $\widehat K^S$, then $X$ 
contains a Reinhardt domain that contains $\widehat K^S$, and any holomorphic function on such a domain is uniquely expressed as a convergent power series. 

\begin{lemma}\label{circ_a_alpha=0}
    Let $X$ be a neighborhood of $\widehat{K}^S$ where $K$ is a compact Reinhardt set with $K\cap \C^{*n}\neq \emptyset$. Let $f\in \O(X)$ have series expansion $f(z)= \sum_{\alpha\in \N^n} a_\alpha z^\alpha$ around zero. If $f$ is bounded on $\widehat{K}^S$, then $a_\alpha=0$ for all $\alpha\notin \R_+S$.
\end{lemma}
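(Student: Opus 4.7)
My plan is to bound the Taylor coefficients $a_\alpha$ directly by Cauchy's estimate on a family of closed polydiscs inside $\widehat{K}^S$ whose polyradii I can ``push'' arbitrarily far in any direction of the dual cone of $\Gamma=\R_+S$.

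Since $K$ is Reinhardt and $K\cap\C^{*n}\neq\emptyset$, I first pick $z_0\in K\cap\C^{*n}$ and set $\varrho=\Log\,z_0\in\R^n$, so the polycircle $P_\varrho:=\{z:|z_j|=e^{\varrho_j},\,j=1,\dots,n\}$ is contained in $K$. Applying Proposition \ref{V^S_KformulaReinhardt} to $P_\varrho$ itself (a compact Reinhardt set satisfying $P_\varrho=\overline{P_\varrho\cap\C^{*n}}$) yields
\[
V^S_{P_\varrho}(z)=\sup_{s\in S}\scalar{s}{\Log\,z-\varrho}.
\]
For any $t$ in the dual cone $\Gamma^\circ=-\mathcal{N}(\varphi_S)$ of $\Gamma=\R_+S$ and any $w$ on the polycircle of polyradius $e^{\varrho-t}$ one has $V^S_{P_\varrho}(w)=\varphi_S(-t)=0$, so this polycircle lies in $\widehat{P_\varrho}^S\subset\widehat{K}^S$. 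Because every $p\in\P^S(\C^n)$ attains its modulus maximum on a closed polydisc on the distinguished boundary, this polycircle containment promotes to the full closed polydisc:
\[
\{z\in\C^n:|z_j|\leq e^{\varrho_j-t_j},\,j=1,\dots,n\}\;\subset\;\widehat{K}^S\;\subset\;X\qquad\text{for every }t\in\Gamma^\circ.
\]

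Set $M:=\|f\|_{\widehat{K}^S}<\infty$. Since $f\in\O(X)$ is holomorphic on a neighborhood of this closed polydisc, and the polydisc contains the origin, the standard Cauchy bound on the Taylor coefficients at $0$ gives
\[
|a_\alpha|\;\leq\;M\cdot e^{-\scalar{\alpha}{\varrho-t}}\;=\;Me^{-\scalar{\alpha}{\varrho}}\cdot e^{\scalar{\alpha}{t}},\qquad \alpha\in\N^n,\;t\in\Gamma^\circ.
\]
If $\alpha\notin\Gamma$, then by Hahn-Banach separation of $\{\alpha\}$ from the closed convex cone $\Gamma$ there exists $t_0\in\Gamma^\circ$ with $\scalar{\alpha}{t_0}<0$. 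Substituting $t=\lambda t_0$ for $\lambda>0$ and letting $\lambda\to+\infty$ forces $a_\alpha=0$, as required.

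The one slightly delicate point I anticipate is the passage from the polycircle containment in $\widehat{K}^S$ to the full closed polydisc containment in $\widehat{K}^S$, which is what legitimizes the Cauchy bound for the Taylor coefficients of $f$ at the origin (since $\widehat{K}^S$ need not be bounded in general). But this is a one-line consequence of the definition of $\widehat{K}^S$ together with the iterated maximum modulus principle applied to polynomials $p\in\P^S(\C^n)$.
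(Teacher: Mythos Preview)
Your proof is correct and follows essentially the same route as the paper's: both pick a point $z_0\in K\cap\C^{*n}$, observe that the polycircles of polyradius $e^{\varrho-t}$ for $t\in\Gamma^\circ$ lie in $\widehat{K}^S$, and then use the Cauchy integral/estimate along these polycircles together with the bipolar identification $\Gamma=\Gamma^{\circ\circ}$ to force $a_\alpha=0$ when $\alpha\notin\Gamma$. The only cosmetic differences are that the paper cites Corollary~\ref{conehull} rather than Proposition~\ref{V^S_KformulaReinhardt} to place the polycircles in $\widehat{K}^S$, and it applies the Cauchy integral formula directly on the polycircle $C_t$ (so your extra step promoting the polycircle to the full closed polydisc via the maximum principle is not needed, though it is correct and harmless).
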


\begin{proof} The dual of 
 $\Gamma=\R_+S$ is  $\Gamma^\circ=-\mathcal{N}(\varphi_S)= \set{\xi \in \R^n}{\varphi_S(-\xi)=0}$. Since $\Gamma$ is a closed convex cone then $\Gamma^{\circ\circ}=\Gamma$. So if $\alpha\notin \R_+S$, there exists a $\xi'\in \Gamma^\circ$ such that $\scalar \alpha{\xi'}< 0$. Then $\xi=-\xi'$ has $\varphi_S(\xi)=0$ and $\scalar \alpha \xi>0$.

Let $z\in K\cap \C^{*n}$ and $r=(|z_1|,\dots, |z_n|)$. Let $C_t$ denote the polycircle with center $0$ and
polyradius $(r_1e^{t\xi_1},\dots,r_ne^{t\xi_n})$. Corollary \ref{conehull} implies that $C_t\subset E$ for all $t\geq 0$. The component $\Omega'$ of $\cap_{\zeta\in \T^n} \zeta\Omega$ that contains $\widehat{K}^S$ is a Reinhardt domain that contains zero. By \cite{Hormander:SCV}, Theorem 2.4.5,  $f$ is expressible by a normally convergent power series $f(z)=\sum_{\alpha\in\N^n}a_\alpha z^\alpha$ in $\Omega'$. By
the Cauchy formula for derivatives we have
\begin{equation*}
a_\alpha=
\dfrac 1{(2\pi i)^n} \int_{C_t}
\dfrac{f(\zeta)}{\zeta^\alpha} \, 
\dfrac{d\zeta_1\cdots d\zeta_n}{\zeta_1\cdots \zeta_n}. 
\end{equation*}
For $\zeta=(r_1e^{t\xi_1+i\theta_1},\dots,r_ne^{t\xi_n+i\theta_n})$ 
on $C_t$ we have $|f(\zeta)|/|\zeta^\alpha|\leq
\|f\|_E\cdot r^{-\alpha}e^{-t\langle \alpha,\xi\rangle}$. The right hand side
tends to $0$ as $t\to +\infty$ and we conclude that  $a_\alpha=0$.
\end{proof}
\bigskip

The assumption $K\cap \C^{*n}\neq \emptyset$ cannot be removed from the previous result. Take for example $S= \operatorname{ch}\{(0,0), (1,0), (1,1)\}\subset \R^2_+$ and $K\subset \{0\}\times \C$. Then $\widehat{K}^S= \{0\}\times \C$ and $z^{(1,2)}$ is bounded on $\widehat{K}^S$, even though $(1,2)\notin S$. What we can say instead is that $f$ coincides on $\widehat{K}^S$ with a function that satisfies the conclusion of Proposition \ref{circ_a_alpha=0}.

\begin{proposition}\label{circ_a_alpha=0_ekkiC^*n}
    Let $X$ be a neighborhood of $\widehat{K}^S$ where $K$ is a compact Reinhardt set. Let $f\in \O(X)$ be bounded on $\widehat{K}^S$. Then there exists a $h\in \O(X)$ with a convergent series expansion  $h(z)= \sum_{\alpha\in \R_+S} a_\alpha z^\alpha$ around zero such that $f=h$ on $\widehat{K}^S$.
\end{proposition}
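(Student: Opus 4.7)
The plan is to define $h$ as the sub-series of the Taylor expansion of $f$ at the origin supported on exponents in $\R_+S$, and to show that the remainder $g:=f-h$ vanishes on $\widehat{K}^S$ by a Cauchy-integral argument applied on each stratum $\widehat{K}^S\cap\C^{*J}$. First I fix a Reinhardt neighborhood $\Omega$ of $\widehat{K}^S$ with $0\in\Omega\subset X$, on which $f(z)=\sum_{\alpha\in\N^n}a_\alpha z^\alpha$ converges normally, and set $h(z)=\sum_{\alpha\in\R_+S\cap\N^n}a_\alpha z^\alpha$. This sub-series converges normally on $\Omega$, and its partial sums $h_N$ lie in $\P^S(\C^n)$. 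Since $\|h_N-h_M\|_{\widehat{K}^S}=\|h_N-h_M\|_K$ by the defining property of the $S$-hull, and the partial sums are uniformly Cauchy on the compact set $K\subset\Omega$, they converge uniformly to $h$ on $\widehat{K}^S$. Thus $h$ is bounded on $\widehat{K}^S$ with $\|h\|_{\widehat{K}^S}=\|h\|_K$, and $g=f-h$ is holomorphic on $\Omega$, bounded on $\widehat{K}^S$, with Taylor series $\sum_{\alpha\notin\R_+S}a_\alpha z^\alpha$.

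To show $g\equiv 0$ on $\widehat{K}^S$, decompose $\widehat{K}^S=\bigsqcup_{J\subset[n]}(\widehat{K}^S\cap\C^{*J})$ and fix $J$ for which this stratum is non-empty. Let $\iota_J\colon\C^\ell\to\C^n$ insert zeros in the coordinates outside $J$, and set $\tilde g_J(w)=g(\iota_J(w))$; using $\iota_J(\beta)\in\R_+S\Leftrightarrow\beta\in\R_+S_J$, the Taylor series of $\tilde g_J$ at $0$ is $\sum_{\beta\in\N^\ell\setminus\R_+S_J}a_{\iota_J(\beta)}w^\beta$, normally convergent on the Reinhardt domain $\pi_J(\Omega\cap\C^J)\subset\C^\ell$. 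By Proposition \ref{piJKJ}, $\pi_J(\widehat{K}^S\cap\C^J)=\widehat{K_J}^{S_J}$, so $\tilde g_J$ is bounded on $\widehat{K_J}^{S_J}$ and the latter meets $\C^{*\ell}$.

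The main step is a Cauchy-integral estimate on $\tilde g_J$ adapting that of Lemma \ref{circ_a_alpha=0}. For each $\beta\in\N^\ell\setminus\R_+S_J$ the bipolar theorem supplies $\xi\in-(\R_+S_J)^\circ$ with $\langle\beta,\xi\rangle>0$. Picking $w_0\in\widehat{K_J}^{S_J}\cap\C^{*\ell}$ and setting $r=(|w_{0,1}|,\dots,|w_{0,\ell}|)$, the polycircle $C_0$ of polyradius $r$ lies inside $\widehat{K_J}^{S_J}$ because the hull is Reinhardt. Applying Corollary \ref{conehull} to the single polycircle $C_0\subset\C^{*\ell}$ (for which the hypothesis is automatic) gives $\widehat{C_0}^{S_J}\cap\C^{*\ell}=\Log^{-1}(\{\log r\}-(\R_+S_J)^\circ)$, so for every $t\geq 0$ the dilated polycircle $C_t$ of polyradius $(r_1 e^{t\xi_1},\dots,r_\ell e^{t\xi_\ell})$ lies inside $\widehat{C_0}^{S_J}\subset\widehat{K_J}^{S_J}$. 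Cauchy's formula on $C_t$ then yields
\begin{equation*}
|a_{\iota_J(\beta)}|\leq\|\tilde g_J\|_{\widehat{K_J}^{S_J}}\cdot r^{-\beta}e^{-t\langle\beta,\xi\rangle},
\end{equation*}
which tends to $0$ as $t\to\infty$; hence $a_{\iota_J(\beta)}=0$, $\tilde g_J\equiv 0$, and $g=0$ on $\widehat{K}^S\cap\C^{*J}$. Ranging over $J$ finishes the proof. The main obstacle is this last step: Lemma \ref{circ_a_alpha=0} cannot be invoked directly because $K_J$ may fail to meet $\C^{*\ell}$, so the polycircle must be anchored inside $\widehat{K_J}^{S_J}$ and dilated along a recession direction using Corollary \ref{conehull} applied to $C_0$ itself.
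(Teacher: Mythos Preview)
Your argument is correct and takes a genuinely different route from the paper's proof. The paper proceeds by induction on the dimension $n$: in the inductive step it either invokes Lemma~\ref{circ_a_alpha=0} directly (when $\widehat{K}^S$ meets $\C^{*n}$, after adjoining such a point to $K$), or else restricts $f$ to each coordinate hyperplane $Z_k$, applies the inductive hypothesis to obtain functions $h_k$, and then glues the $h_k$ into a single $h$ by checking that the coefficients agree on overlaps. Your approach avoids both the induction and the gluing: you define $h$ once and for all as the sub-series of $f$ supported on $\R_+S$, use the elementary fact $\|p\|_{\widehat{K}^S}=\|p\|_K$ for $p\in\P^S(\C^n)$ to see that $h$ is bounded on the hull, and then run the Cauchy-integral estimate of Lemma~\ref{circ_a_alpha=0} stratum by stratum. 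The key observation that makes this work without induction is that the anchoring polycircle need not lie in $K_J$ itself (which may miss $\C^{*\ell}$ entirely) but only in $\widehat{K_J}^{S_J}$, which automatically meets $\C^{*\ell}$ once the stratum $\widehat{K}^S\cap\C^{*J}$ is nonempty; the recession direction from Corollary~\ref{conehull} applied to the single polycircle $C_0$ then lets the estimate go through. This is cleaner than the paper's inductive gluing and isolates more sharply why the argument of Lemma~\ref{circ_a_alpha=0} really only needs a point of the \emph{hull} in the open torus, not of $K$. Two small points worth noting: the stratum $J=\emptyset$ (i.e.\ the origin) is not covered by the Cauchy argument but is trivial since $0\in\R_+S$ forces $g(0)=0$; and your $h$ is a priori in $\O(\Omega)$ rather than $\O(X)$, though the paper's own proof has the same feature and it is immaterial for the use made of the proposition.
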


\begin{proof} In this proof we will use the index $k\in [n]= \{1,\dots,n\}$ to denote the ordered subset $J=[n]\setminus\{k\}$ in the notation from Section 2. To be precise, denote $Z_k = \set{z\in \C^n}{z_k=0}$ and  $X_k=\set{x\in \R^n}{x_k=0}.$
Let $\pi_k\colon \C^n\to \C^{n-1}$ denote $\pi_k(z)=(z_1,\dots, z_{k-1}, z_{k+1},\dots,z_n)$ and use the same notation for its restriction to $\R^{n-1}$. Let $K_k= K\cap  Z_k$ and $S_k= \pi_k(S\cap X_k)\subset \R^{n-1}_+$. Additionally, we define $\mu_k\colon \C^{n-1}\to \C^n$ by $\mu_k(w)=(w_1,\dots, w_{k-1}, 0, w_{k},\dots, w_{n-1})$. Observe that for all $k\in [n]$ we have $\pi_k\circ \mu_k=\operatorname{id}_{\C^{n-1}}$ and on $Z_k$ we have $\mu_k\circ \pi_k=\operatorname{id}_{Z_k}$.

    We induct over the dimension $n$. The base case $n=1$ is easy: We covered the case $K\cap\C^*\neq\emptyset$ in Lemma \ref{circ_a_alpha=0} and if $K\cap\C^*=\emptyset$ then $K=\{0\}$. If $S=\{0\}$
    then $\widehat{K}^S=\C$ and $f$ is constant, and $h=f\in \P^S(\C)$. If $S=[0,s]$, $s>0$ then $\widehat{K}^S=\{0\}$. The constant function $h=f(0)$ is in $\P^S(\C)$ and $f=h$ on $\widehat K^S$.

    Assume the result  is true in the dimension $n-1$. If $\widehat{K}^S$ contains a point $z\in \C^{*n}$ the result follows by Lemma \ref{circ_a_alpha=0} for the set $K\cup\{z\}$. So we may assume that $\widehat{K}^S\subset \C^n\setminus \C^{*n}$. 
    The holomorphic function $f\circ\mu_k$ 
    is bounded in a neighborhood of the $\P^{S_k}(\C^{n-1})$-hull of  $ \pi_k(K_k)$ by Proposition \ref{piJKJ}. 
    
    By the inductive hypothesis, there exists a holomorphic $h_k$ with a series expansion  $h_k(w)=\sum_{\beta\in S_k}{a}^k_\beta w^\beta$  defined in a neighborhood $\Omega_k$ of $\pi_k(K_k)\subset \C^{n-1}$ such that $f\circ\mu_k=h_k$ on $\pi_k(K_k)$. Then $f $ coincides with $h_k\circ \pi_k$ on $K_k$.  

    If $z\in K_j\cap K_k= K\cap Z_j\cap Z_k$, then $f(z)=h_j(\pi_j(z))= h_k(\pi_k(z))$ which implies that 
    $\sum_{\alpha\in S\cap X_j\cap X_k}{c}^j_{\pi_j(\alpha)} z^\alpha= \sum_{\alpha\in S\cap X_j\cap X_k}{c}^k_{\pi_k(\alpha)} z^\alpha$. This implies that ${c}^k_{\pi_k(\alpha)}$ is the same number for all $k$ such that $\alpha_k=0$. We put ${a}_\alpha={c}^k_{\pi_k(\alpha)}$ for some $k$ such that $\alpha_k=0$ and ${a}_\alpha=0$ if $\alpha\in \R^{*n}$ and regard the series $h(z)= \sum_{\alpha\in \R_+S} {a}_\alpha z^\alpha$. Then $h= h_k\circ \pi_k$ on $K_k$,  therefore $f=h$ on $\widehat{K}^S$.
    
    We also need to show that $h$ is convergent. Since each of the series $h_k(w)$ is convergent in the neighborhood $\Omega_k$ then 
    by \cite{Hormander:SCV}, Theorem 2.4.2, there exists a constant $C_k>0$ such that $|c^k_{\beta} w^\beta|\leq C_k$ for all $\beta\in \N^{n-1}$. Take $C= \max_{k\in [n]} C_k$, and observe that $|a_\alpha z^\alpha|\leq C_k$ for all $\alpha\in \N^n$ on the neighborhood $\bigcap_{k=1}^n \pi_k^{-1}\Omega_k$ of $K$.
    \end{proof}
\bigskip

What remains to prove in Theorem \ref{approx_on_E} is the implication (v)$\Rightarrow$(i).

\begin{proposition}
    \label{(v)=>(i)}
    Let $X$ be a neighborhood $\widehat{K}^S$ for a compact Reinhardt set $K\subset \C^n$. 
    Assume that $f\in \O(X)$ has a series expansion centered at zero of the form $f(z)= \sum_{\alpha\in \R_+S} {a}_\alpha z^\alpha$.
    Then $f$ can be approximated uniformly on $\widehat{K}^S$ by polynomials from $\P^S(\C^n)$.
\end{proposition}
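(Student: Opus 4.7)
I would attack this with the most natural device: uniform convergence of partial sums of the given series. The real work is to package the partial sums so that they lie in $\P^S(\C^n)$ and then to transfer the uniform bound from $K$ to $\widehat K^S$ through the defining inequality of the $S$-hull.

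Since $0\in S$ and $S$ is bounded, the sets $NS\cap\N^n$ are finite and exhaust $\R_+S\cap\N^n$ as $N\to\infty$, so I would set
\[
p_N(z)=\sum_{\alpha\in NS\cap\N^n} a_\alpha z^\alpha \in \P^S_N(\C^n).
\]
Because $K$ is Reinhardt, so is $\widehat K^S$; it also contains the origin, as one sees by averaging any $p\in\P^S(\C^n)$ over $\T^n$ (only the constant term of $p$ survives, giving $|p(0)|\leq \|p\|_K$). Hence $X$ contains a Reinhardt domain $X'$ with $\{0\}\cup\widehat K^S\subset X'\subset X$, on which $f$ admits a normally convergent Taylor expansion by the standard theory of Reinhardt domains (as already invoked in the proof of Lemma \ref{circ_a_alpha=0}). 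Uniqueness of that expansion forces it to agree with the given series, and in particular $p_N\to f$ uniformly on every compact subset of $X'$, so $\|f-p_N\|_K\to 0$.

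For the central step, note that for any $M>N$ the difference $p_M-p_N$ again belongs to $\P^S(\C^n)$, so by the very definition of the $S$-hull,
\[
|(p_M-p_N)(z)|\leq \|p_M-p_N\|_K \qquad \text{for every } z\in \widehat K^S.
\]
Fixing $z$ and letting $M\to\infty$ — using that $\{z\}\cup K$ is a compact subset of $X'$ on which $p_M\to f$ uniformly — the left side tends to $|f(z)-p_N(z)|$ and the right side to $\|f-p_N\|_K$. Taking the supremum over $z\in \widehat K^S$ yields
\[
\|f-p_N\|_{\widehat K^S}\leq \|f-p_N\|_K \longrightarrow 0 \qquad (N\to\infty),
\]
which is the desired uniform approximation.

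The step that I expect to require the most care is the identification of the prescribed series as the Reinhardt-normally convergent Taylor expansion of $f$ on a neighborhood of $\widehat K^S$; this rests on uniqueness of power series together with the fact that $0\in \widehat K^S$. Once that is secured, the possible unboundedness of $\widehat K^S$ poses no further difficulty, because uniform control on the compact set $K$ automatically propagates to all of $\widehat K^S$ through the $S$-hull inequality.
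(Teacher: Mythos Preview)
Your argument is correct and is genuinely more direct than the paper's. The key move --- applying the defining inequality of $\widehat K^S$ to the Cauchy differences $p_M-p_N\in\P^S(\C^n)$ and then letting $M\to\infty$ --- immediately transfers the uniform bound from $K$ to $\widehat K^S$, with no case analysis. The paper instead fixes a point $z_0\in\widehat K^S$, splits coordinates according to which $z_{0,j}$ vanish and which directions lie in $S$, and then argues via the convexity of $\xi\mapsto \max_{\zeta\in\T^\ell}\log|\tilde f(e^{\xi}\zeta,0,0)|$ together with the explicit description of $\widehat K^S$ in logarithmic coordinates (Corollary \ref{conehull}, Proposition \ref{piJKJ}). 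That route exercises the structure theory developed in Sections~\ref{sec:S-convex}--\ref{sec:Shulls_of_Reinhardt_sets} and yields a pointwise bound $|\tilde f(z_0)|\leq\varepsilon$ by hand, but it is considerably longer and requires tracking boundedness of both $f$ and $f_m$ along unbounded rays in $\widehat K^S$. Your approach sidesteps all of this; the only point on which you should remain careful is exactly the one you flag: that $\widehat K^S$ is a connected Reinhardt set containing $0$ (stated in the paper just before Lemma~\ref{circ_a_alpha=0}), so that the connected component of $\bigcap_{\zeta\in\T^n}\zeta X$ containing $\widehat K^S$ is a Reinhardt domain on which the standard normal-convergence result applies and the given series is recovered by uniqueness.
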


\begin{proof}
    The polynomials $f_N(z)= \sum_{|\alpha|\leq N} {a}_\alpha z^\alpha$ converge to $f$ uniformly on $K$.
    For each $N$ and each of the finitely many points $\alpha \in N\Sigma\cap \R^+S$ there is an $m$ such that $\alpha\in mS$. Therefore there is an $m_N\in \N$ such that  $f_N\in \P^S_{m_N}(\C^n)$. By rearranging the indices, we have a sequence  $f_m\in \P^S_{m}(\C^n)$ tending to $f$ uniformly on $K$. So let $\varepsilon>0$ and $m\in \N$ be such that $\Tilde f= f-f_m$ has $\|\Tilde f\|_{K}\leq \varepsilon$.

    For $z_0\in \widehat{K}^S$ we want  to show that $|\Tilde{f}(z_0)|\leq\varepsilon$ as that would imply that $f_m\to f$ uniformly on $\widehat{K}^S$. Recall that we may assume that $\overline{S\cap \Q^n}=S$. Rearrange the coordinates if needed so 
    \begin{align*}
        \{1, \dots, \ell\}&= \set{j\in [n]}{z_{0,j}\neq 0\text{ and }\exists s\in S, s_j\neq 0}\\
        \{\ell+1, \dots, \ell+k\}&= \set{j\in [n]}{z_{0,j}\neq 0\text{ and }\forall s\in S, s_j= 0}\\
         \{\ell+k+1, \dots, n\}&= \set{j\in [n]}{z_{0,j}= 0}
    \end{align*}
    Write $z_0=(z_0', z_0'',0)$ with $z_0'\in \C^{*\ell}$, $z_0''\in \C^{*k}$. Let $J=\{1,\dots, \ell\}$ and use the notation laid out in Section 2. Then $S_J\cap \R^{*\ell}_+\neq 0$.  The function $$u(z')= \max_{\zeta\in \T^\ell}\log|\Tilde{f}(\zeta z',0,0)|$$ is plurisubharmonic on the open Reinhardt set $\Omega=\cap_{\zeta\in \T^\ell} \zeta \cdot\pi_J(X\cap\C^J)\subset \C^\ell$ which is a neighborhood of $K_J$ and $u(\zeta z)= u(z)$ for all $\zeta\in \T^\ell$.

    Then  $\R^\ell\ni x\mapsto u(e^x)$ is a convex function of $\ell$ real variables.  Since $u(e^x)\leq \log \varepsilon$ holds for $x\in \Log\, K_J$, it is also true for $x\in \ch(\Log\, K_J)$. By Proposition \ref{piJKJ}, $z'\in \widehat K_J^{S_J}\subset \Omega$. By Proposition  \ref{conehull}, we have $$\widehat K_J^{S_J}\cap\C^{*\ell}= \Log^{-1}\big(\ch(\Log \, K_J)+\mathcal{N}(\varphi_{S_J})\big),$$ so $\Log\, z_0=\varrho +\xi$ for some $\varrho\in \ch(\Log\, K_J)$ and $\xi\in \mathcal{N}(\varphi_S)$. 
    Furthermore, $e^{\varrho + t\xi}\in \widehat{K}_J^{S_J}$ and therefore $(e^{\varrho + t\xi},0)\in  \widehat{K}^S$ for all $t\geq 0$. 
    
    The function $v\colon\R_+\to \R$, $v(t)=u(e^{\varrho+ t\xi})$ is convex and $v(0)\leq \log \varepsilon$. Since and both $f$ and $f_m$ are bounded on $\widehat{K}^S$, $v$ is bounded above on $\R_+$.  Bounded convex functions on $\R_+$ are decreasing, so we can conclude that $v\leq \log \varepsilon$. In particular $v(1)\leq \log \varepsilon$ so $|\Tilde{f}(z_0',0,0)|\leq \varepsilon$. Since $\Tilde{f}$ is independent of the $(\ell+1),\dots, (\ell+k)$-th variables, we can conclude that  $|\Tilde{f}(z_0)|\leq \varepsilon$.
\end{proof}
\bigskip

If $f$ is expressed by a power  series of the form \begin{equation}\label{series}
    f(z)= \sum_{\alpha\in \R_+S} {a}_\alpha z^\alpha
\end{equation}with uniform convergence on some compact set $K$, then the domain of convergence of \eqref{series} is some Reinhardt domain that contains $K$. The domains of convergence of power series are well characterized in Section 2.4 of Hör\-mander's book \cite{Hormander:SCV}. His Theorem 2.4.3 asserts that the domain of convergence $\Omega$ of any power series is such that its image in logarithmic coordinates $\Log\, \Omega$ is an open  convex set and that for every $\varrho\in \Log\, \Omega$, then $\varrho-\R^n_+\subset \Log\, \Omega$. By Proposition \ref{-dualcone}, this can be phrased so that the closure of $\Log\, \Omega$ is convex with respect to the cone $\R^n_+= \R_+\Sigma$. With the added information that the series in question is of the form \eqref{series}, we can show that the closure of its domain of convergence in logarithmic coordinates is convex with respect to the cone $\R_+S$.

\begin{proposition}\label{extend_holo}
    Let $\Omega$ be a Reinhardt domain containing the origin and let $f\in \O(\Omega)$ have series expansion \eqref{series}. Denote $D= \Log(\Omega\cap\C^{*n})$ and let $\Gamma= \R_+S$. Let $\widetilde\Omega$ be the interior of 
    $\Log^{-1} (\widehat{D}_\Gamma)$. Then the convergence of \eqref{series} is normal in $\widetilde \Omega$ and $f$ extends to a holomorphic function on $\widetilde \Omega$. 
\end{proposition}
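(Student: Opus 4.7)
The plan is to show that the power series \eqref{series} converges normally on all of $\widetilde\Omega$; its sum then automatically provides the required holomorphic extension of $f$. Beyond the classical Hartogs theory of Reinhardt convergence domains, the only new ingredient is an Abel-type estimate which, because every exponent lies in $\Gamma=\R_+S$, upgrades the usual logarithmic convexity of the convergence set to full $\Gamma^\circ$-completeness.

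First I would let $E\subseteq\R^n$ be the set of $x$ at which $\sum_{\alpha\in\Gamma\cap\N^n}|a_\alpha|e^{\langle\alpha,x'\rangle}$ is uniformly summable for $x'$ in some neighborhood of $x$, so that $\Log^{-1}(E)\subseteq\C^{*n}$ is the open set of normal convergence of the series. By the standard Hartogs/log-convexity theory in \cite{Hormander:SCV}, Section 2.4, $E$ is open and convex, and it contains $D=\Log(\Omega\cap\C^{*n})$ because the series represents $f$ on $\Omega$.

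The heart of the argument is then a one-line Abel estimate: for $x\in E$, $t\in\Gamma^\circ$ and $\alpha\in\Gamma$, the inequality $\langle\alpha,t\rangle\ge 0$ gives
\[
|a_\alpha|\,e^{\langle\alpha,x-t\rangle}\;\le\;|a_\alpha|\,e^{\langle\alpha,x\rangle},
\]
so uniform summability in a neighborhood of $x$ transfers to a neighborhood of $x-t$. Hence $E-\Gamma^\circ\subseteq E$, that is, $E$ is $\Gamma^\circ$-complete. Being also open, convex and containing $D$, the set $E$ must then contain the open convex set $U-\Gamma^\circ$, where $U$ denotes the (open) convex hull of $D$.

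It remains to identify $U-\Gamma^\circ$ with $\operatorname{int}(\widehat D_\Gamma)$. Proposition~\ref{-dualcone} gives $\widehat D_\Gamma=\overline U-\Gamma^\circ$, which a short topological check identifies with the closure of the open convex set $U-\Gamma^\circ$; since an open convex set is the interior of its closure, this interior is exactly $\operatorname{int}(\widehat D_\Gamma)$. Therefore $\widetilde\Omega=\Log^{-1}(\operatorname{int}\widehat D_\Gamma)\subseteq\Log^{-1}(E)$, the series converges normally on $\widetilde\Omega$, and its sum is holomorphic there and agrees with $f$ on the nonempty open subset $\Omega\cap\C^{*n}\subseteq\widetilde\Omega$, giving the required extension. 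I expect the main obstacle to be precisely this last topological identification, since it requires careful bookkeeping of open versus closed convex hulls and of the closure of a Minkowski difference by the cone $\Gamma^\circ$; by contrast, the Abel estimate that drives the whole argument is essentially trivial.
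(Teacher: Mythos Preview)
Your proof is correct and follows essentially the same line as the paper's: both hinge on the Abel-type estimate $\langle\alpha,\xi\rangle\leq 0$ for $\alpha\in\Gamma$ and $-\xi\in\Gamma^\circ$, combined with Proposition~\ref{-dualcone} and the logarithmic convexity of power-series convergence domains. The one difference is that the paper works with the bounded-monomials set $B=\{z:|a_\alpha z^\alpha|\leq C\}$ from \cite{Hormander:SCV}, Theorem~2.4.2, and shows directly that $\Log^{-1}(\widehat D_\Gamma)\subseteq B$, so that taking interiors immediately gives $\widetilde\Omega\subseteq\operatorname{int}B$; this sidesteps the open-versus-closed convex-hull bookkeeping you flag as your main obstacle.
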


\begin{proof}
    By \cite{Hormander:SCV}, Theorem 2.4.2, the series \eqref{series} is normally convergent in the interior of the set $B$ of all $z\in \C^n$ such that $|c_\alpha z^\alpha|\leq C$ for all $\alpha\in \N^n$. By \cite{Hormander:SCV}, Theorem 2.4.6,  $\Log (B\cap\C^{*n})$ contains $\ch\,D$.  If $z\in\Log^{-1}(\widehat D_{\Gamma})$, then $\Log\, z= b+\xi$ where $b\in \ch D$ and $\varphi_S(\xi)=0$  by Proposition \ref{-dualcone}. For any $\alpha\in \R_+S\cap \N^n$ we then have $|c_\alpha z^\alpha|= |c_\alpha|e^{\scalar \alpha{b+\xi}}\leq |c_\alpha| e^{\scalar \alpha{b}}\leq C$. This shows that $z\in B$ and that the open set $\widetilde\Omega$ lies in the interior of $B$ so \eqref{series} is normally convergent on $\widetilde \Omega$.
    \textcolor{white}{.} 
\end{proof}
\bigskip

We saw in \eqref{Log_hat_K^S_inclusion} that the $S$-hull of compact set $K$ restricted to $\C^{*n}$ is contained in $\Log^{-1}\widehat{A}_\Gamma$ where $A= \Log\, K$. Whenever $\Omega$ is a Reinhardt domain, then $\widetilde \Omega$ will contain the $S$-hull of every compact subset of $\Omega$.

\begin{corollary}
    Let $\Omega$ be a Reinhardt domain containing the origin and let $K$ be a compact subset of $\Omega$. If $f\in \O(\Omega)$ has a series expansion \eqref{series}, then $f$ extends as  a holomorphic function in a neighborhood of $\widehat{K}^S$.
\end{corollary}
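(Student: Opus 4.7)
The plan is to apply Proposition~\ref{extend_holo}, which writes $f$ as a normally convergent power series on an open set $\widetilde\Omega$ and thereby extends $f$ holomorphically there. The task is then to verify that $\widehat K^S$ is contained in $\widetilde\Omega$, or rather in its natural Reinhardt extension across coordinate hyperplanes. I first reduce to the case where $K$ is itself Reinhardt by replacing $K$ with its Reinhardt hull $\T^n K$, which is still compact, still contained in $\Omega$ (since $\Omega$ is Reinhardt), and whose $S$-hull contains $\widehat K^S$.

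The essential step is the $\C^{*n}$-part. Using compactness of $K$ in the open set $\Omega$, I pick a Reinhardt open subdomain $\Omega' \Subset \Omega$ that still contains $K$ and the origin. Writing $A = \Log(K \cap \C^{*n})$ and $D' = \Log(\Omega' \cap \C^{*n})$, the inclusion \eqref{Log_hat_K^S_inclusion} combined with monotonicity of $\widehat{\cdot}_\Gamma$ gives
\[
\Log[\widehat K^S \cap \C^{*n}] \subset \widehat A_\Gamma \subset \widehat{D'}_\Gamma.
\]
By Proposition~\ref{-dualcone}, $\widehat{D'}_\Gamma = \operatorname{ch}(D') - \Gamma^\circ$; since $D'$ is open, so is $\operatorname{ch}(D')$, and the set $\operatorname{ch}(D') - \Gamma^\circ$ is open in $\R^n$ as a union of translates of an open convex set. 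Being contained in $\widehat D_\Gamma$, it lies in $\operatorname{int}_{\R^n}(\widehat D_\Gamma)$. Pulling back through $\Log$ yields $\widehat K^S \cap \C^{*n} \subset \widetilde\Omega$.

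For the remaining points $z \in \widehat K^S$ with some zero coordinates, let $J = \{j : z_j \neq 0\}$. Proposition~\ref{piJKJ} gives $\pi_J(z) \in \widehat{K_J}^{S_J}$. Running the argument above in dimension $\ell = |J|$ on the Reinhardt domain $\pi_J(\Omega \cap \C^J) \ni 0$ with the restriction $f \circ \iota_J$ (whose Taylor series inherits the form $\sum_{\beta \in \R_+ S_J \cap \N^\ell} b_\beta w^\beta$), one places $\pi_J(z)$ inside the corresponding $\widetilde{\Omega_J}$. Lifting back and using that a power series with nonnegative integer exponents converging normally on a Reinhardt open subset of $\C^n$ converges normally on its log-convex complete Reinhardt hull (Hörmander, Theorem~2.4.2, as already invoked in the proof of Proposition~\ref{extend_holo}), the series \eqref{series} extends $f$ to a holomorphic function on an open $\C^n$-neighborhood of $z$.

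The main obstacle is this last step: one has to show that the lower-dimensional extensions and the $\C^{*n}$-extension $\widetilde\Omega$ assemble into one open $\C^n$-neighborhood of $\widehat K^S$ on which \eqref{series} converges. The cleanest route is probably to reuse the proof of Proposition~\ref{extend_holo} directly --- specifically its auxiliary set $B = \{z \in \C^n : |a_\alpha z^\alpha| \le C \text{ for all } \alpha\}$ --- and argue that $\operatorname{int}_{\C^n}(B)$, rather than merely $\widetilde\Omega \subset \C^{*n}$, contains a $\C^n$-neighborhood of the full $\widehat K^S$, with the $\C^{*n}$-argument above providing the bulk of the inclusion and Proposition~\ref{piJKJ} furnishing the strata near the coordinate hyperplanes.
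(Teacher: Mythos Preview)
Your proposal follows exactly the route the paper intends. In fact the paper offers no proof of this corollary at all: it simply remarks before the statement that $\Log[\widehat K^S\cap\C^{*n}]\subset\widehat A_\Gamma$ by \eqref{Log_hat_K^S_inclusion} and then asserts that $\widetilde\Omega$ ``will contain the $S$-hull of every compact subset of $\Omega$''. Your argument is the same idea fleshed out, and you are more careful than the paper on two points --- passing to a relatively compact $\Omega'\Subset\Omega$ so that $\widehat{D'}_\Gamma=\operatorname{ch}D'-\Gamma^\circ$ is genuinely open (hence lands inside $\operatorname{int}\widehat D_\Gamma$), and flagging that $\widetilde\Omega\subset\C^{*n}$ so the points of $\widehat K^S$ on coordinate hyperplanes need separate attention.

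On that last point you are working harder than necessary. The stratification via Proposition~\ref{piJKJ} is correct but heavy. A shorter route, which your final paragraph gestures at, bypasses the strata entirely: for every $\alpha\in\R_+S\cap\N^n$ the monomial $z^\alpha$ lies in $\P^S(\C^n)$, so $|a_\alpha w^\alpha|\le |a_\alpha|\,\|z^\alpha\|_K$ for every $w\in\widehat K^S$; normal convergence of \eqref{series} on the compact $K\subset\Omega$ gives $\sum_\alpha|a_\alpha|\,\|z^\alpha\|_K<\infty$, hence the series converges absolutely and uniformly on all of $\widehat K^S$ --- no splitting into $\C^{*n}$ and hyperplane parts required. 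To upgrade this to an \emph{open} neighborhood one still needs your $\Omega'$-type enlargement together with the fact that the domain of convergence of a power series is a complete Reinhardt domain (H\"ormander, Theorem~2.4.3), so that once the $\C^{*n}$-part of $\widehat K^S$ sits inside the open $\Log^{-1}(\widehat{D'}_\Gamma)$, the hyperplane points, being coordinatewise dominated by nearby $\C^{*n}$-points there, come along for free. This is essentially what you propose with the set $B$; you have correctly located where the work lies, and the paper simply does not address it.
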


Furthermore if $X$ is any neighborhood of a compact Reinhardt set $K$, then the connected component of  $\Omega=\cap_{\zeta\in \T^n} \zeta X$ that contains $K$ is a Reinhardt domain. Note that Proposition \ref{extend_holo} implies that if $f$ is only assumed to be defined on any connected Reinhardt domain containing $K$ and the origin, then $f$ can be extended to a neighborhood of $\widehat{K}^S$. By Proposition \ref{(v)=>(i)}, this extension can be approximated uniformly by $\P^S(\C^n)$ polynomials on $\widehat K^S$.

\section[Proof of Theorem \ref{bigTheorem}]{Proof of Theorem \ref{bigTheorem}}\label{sec:CompactHulls_Hörmander}

The main tool in the proof of Theorem \ref{bigTheorem} is
 Hörmander's $L^2$-methods, so 
for the reader's convenience we present his Theorem 4.2.6 from \cite{Hormander:convexity}.

\begin{theorem} {\bf(H\"ormander)} \label{thm:5.1} 
  Let $X$ be a pseudoconvex domain in  
$\C^n$ and $\varphi\in \PSH(X)$. Let $\varphi_a(z)=\varphi(z)+a\log(1+|z|^2)$ for some $a>0$.
For every $f\in L^2_{(0,1)}(X,\varphi_{a-2})$ 
satisfying $\bar\partial f=0$ 
there exists a solution 
$u\in L^2(X,\varphi_a)$ of 
$\bar\partial u=f$ satisfying the estimate
\begin{align}  
\label{eq:5.8}
\|u\|_{\varphi_a}^2&=\int_X |u|^2(1+|z|^2)^{-a} e^{-\varphi}\,
                     d\lambda \\
&\leq 
\dfrac 1a\int_X |f|^2(1+|z|^2)^{-a+2} e^{-\varphi}\, d\lambda
=\dfrac 1a \|f\|_{\varphi_{a-2}}^2.
\nonumber
\end{align}
If $f_j\in \mathcal{C}^\infty(X)$ for $j=1,\dots,n$, then $u\in \mathcal{C}^\infty(X)$. 
\end{theorem}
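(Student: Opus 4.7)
The plan is to deduce the theorem from the single-weight Hörmander $L^2$-existence theorem for $\bar\partial$ applied to the weight $\Psi:=\varphi_a=\varphi+a\log(1+|z|^2)$. The added term $a\log(1+|z|^2)$ is strictly plurisubharmonic, and its complex Hessian supplies exactly the factor $1/a$ in the estimate.

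First I would verify the Hessian lower bound. A direct computation gives
$$\frac{\partial^2\log(1+|z|^2)}{\partial z_j\partial\bar z_k}=\frac{\delta_{jk}(1+|z|^2)-\bar z_j z_k}{(1+|z|^2)^2},$$
and, combined with Cauchy-Schwarz, this yields for every $\xi\in\C^n$
$$\sum_{j,k}\Psi_{j\bar k}\,\xi_j\bar\xi_k\;\geq\; \frac{a(1+|z|^2)|\xi|^2-a|\langle\xi,z\rangle|^2}{(1+|z|^2)^2}\;\geq\;c(z)\,|\xi|^2,\qquad c(z):=\frac{a}{(1+|z|^2)^2}.$$
In particular $\Psi\in\PSH(X)$. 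Since $c(z)^{-1}e^{-\Psi}=a^{-1}(1+|z|^2)^{2-a}e^{-\varphi}$, the inequality \eqref{eq:5.8} is equivalent to $\int_X|u|^2 e^{-\Psi}d\lambda\leq\int_X|f|^2 c^{-1}e^{-\Psi}d\lambda$, which is precisely the conclusion of the single-weight Hörmander theorem with weight $\Psi$ and curvature lower bound $c$.

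To prove the latter I would carry out the standard three-substep Hörmander argument. Exhaust $X$ by smoothly bounded strongly pseudoconvex $X_\nu\Subset X_{\nu+1}\nearrow X$, on each of which the weights are bounded. For smooth $(0,1)$-forms $g$ compactly supported in $X_\nu$ and in the domain of the $\Psi$-adjoint $\bar\partial^*$, the Bochner-Kodaira-Morrey-Kohn identity gives
$$\|\bar\partial g\|_\Psi^2+\|\bar\partial^* g\|_\Psi^2=\sum_{j,k}\int_{X_\nu}\Big|\frac{\partial g_k}{\partial\bar z_j}\Big|^2 e^{-\Psi}d\lambda\,+\,\int_{X_\nu}\sum_{j,k}\Psi_{j\bar k}\,g_j\bar g_k\, e^{-\Psi}d\lambda\,+\,B_\nu(g),$$
where the gradient term is manifestly nonnegative and the boundary term $B_\nu(g)$ is nonnegative by pseudoconvexity of $X_\nu$. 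Discarding these contributions and invoking the Hessian bound yields the \emph{basic a priori inequality}
$$\int_{X_\nu}c|g|^2 e^{-\Psi}d\lambda\leq\|\bar\partial g\|_\Psi^2+\|\bar\partial^* g\|_\Psi^2,$$
which a Friedrichs-type density lemma extends to the full intersection $\mathrm{dom}(\bar\partial)\cap\mathrm{dom}(\bar\partial^*)$ on $X_\nu$.

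Given this, a standard Hahn-Banach/Riesz argument produces $u_\nu\in L^2(X_\nu,e^{-\Psi}d\lambda)$ solving $\bar\partial u_\nu=f|_{X_\nu}$ with $\|u_\nu\|_{\varphi_a}^2\leq a^{-1}\|f\|_{\varphi_{a-2}}^2$. For $g\in\mathrm{dom}(\bar\partial^*)$ one decomposes $g=g_1+g_2$ with $g_1\in\ker\bar\partial$ and $g_2\in(\ker\bar\partial)^\perp$; the identity $\bar\partial\bar\partial=0$ forces $\bar\partial^* g_2=0$ by taking adjoints and using closedness, while $\langle f,g_2\rangle_\Psi=0$ because $f\in\ker\bar\partial$, so Cauchy-Schwarz and the basic inequality applied to $g_1$ yield
$$|\langle g,f\rangle_\Psi|=|\langle g_1,f\rangle_\Psi|\leq\Big(\int_{X_\nu}|f|^2 c^{-1}e^{-\Psi}d\lambda\Big)^{1/2}\|\bar\partial^* g\|_\Psi\leq\frac{1}{\sqrt{a}}\|f\|_{\varphi_{a-2}}\|\bar\partial^* g\|_\Psi.$$
The antilinear map $\bar\partial^* g\mapsto\langle g,f\rangle_\Psi$ is thereby bounded, and Hahn-Banach plus Riesz produce the solution $u_\nu$. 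A weak-$*$ limit via Banach-Alaoglu as $\nu\to\infty$ delivers $u$ on $X$ satisfying \eqref{eq:5.8}, and interior ellipticity of $\bar\partial^*\bar\partial$ applied to $u$ gives $u\in\mathcal{C}^\infty(X)$ when $f\in\mathcal{C}^\infty(X)$. The main obstacle is the density lemma for $\mathrm{dom}(\bar\partial)\cap\mathrm{dom}(\bar\partial^*)$ on the bounded subdomains $X_\nu$, together with the careful verification that the Levi boundary term $B_\nu$ really has nonnegative sign for strongly pseudoconvex $X_\nu$; once these two ingredients are in place, the remainder is bookkeeping.
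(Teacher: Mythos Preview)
The paper does not prove this theorem: it is quoted verbatim as Theorem~4.2.6 from H\"ormander's book \emph{Notions of Convexity} and used as a black box in the proof of Theorem~\ref{bigTheorem}. Your outline is a correct sketch of the standard proof---the key observation that the complex Hessian of $a\log(1+|z|^2)$ is bounded below by $a(1+|z|^2)^{-2}$ times the identity is exactly what reduces the two-weight statement to the single-weight version, and the Bochner--Kodaira identity, functional-analytic duality, and exhaustion argument you describe are the ingredients H\"ormander himself uses. Since the paper supplies no argument of its own here, there is nothing further to compare.
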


Our task is to find an appropriate weight $\varphi$ in order to be assured that the corresponding $L^2$-estimate implies that a holomorphic function is from $\P^S(\C^n)$. The prototype for such a result is Theorem 3.6
from \cite{MagSigSigSno:2023}, which states that an entire function $p$ on $\C^n$ is a member of $\P^S_m(\C^n)$, $m\in \N$ if and only if for some constants $C>0$ and $a$ smaller than the distance between $mS$ and $\N^n\setminus mS$ in the $L^1$-norm, we have
\begin{equation}\label{eq:p_Liouville}
   |p(z)|\leq C(1+|z|)^ae^{mH_S(z)}, \qquad z\in \C^n. 
\end{equation}

Therefore we need a result where a finite $L^2$-estimate can imply \eqref{eq:p_Liouville}. Our trick is that the weight should accommodate the real Jacobian of the change of coordinates into logarithmic coordinates.

\begin{proposition}\label{cor:u_uni_bound}
    Let $u\in \mathcal{C}^1(\C^n)$ be such that $\overline{\partial}u$ has compact support. Denote  $\nu(z)=\scalar{\mathbf{1}}{\Log\, z}$.  If
\begin{equation}
  \label{eq:6.10'}
\int_{\C^n} |u(z)|^2(1+|z|^2)^{-{a}} e^{-2mH_S(z)-2\nu(z)} d\lambda(z)
<+\infty,
\end{equation}
then there exists a constant $C>0$ such that
\begin{equation}\label{u_uni_bound}
    |u(z)| \leq C(1+|z|)^{a
    } e^{mH_S(z)}, \qquad z\in \C^n.
\end{equation}
\end{proposition}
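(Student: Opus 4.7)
The key structural remark is that the weight factor $e^{-2\nu(z)}=|z_1\cdots z_n|^{-2}$ is exactly the Jacobian of the change of variables into logarithmic coordinates on $\C^{*n}$, which makes multiplicative polydiscs $Q(z_0,\rho):=\{z\in\C^n:|z_j-z_{0,j}|<\rho|z_{0,j}|,\ j=1,\dots,n\}$ the natural test sets. A second observation handles the low-order bookkeeping: since $0\in S$ one has $\varphi_S\geq 0$, hence $H_S\geq 0$ on $\C^n$ and the universal inequality $(1+|z|)^a e^{mH_S(z)}\ge 1$ holds on all of $\C^n$. I will split $\C^n$ into three zones.

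\emph{Step 1 (vanishing off $\C^{*n}$).} Suppose $z_0\in\C^n\setminus\C^{*n}$, so $z_{0,j}=0$ for some $j$. If $u(z_0)\ne 0$, continuity furnishes $c>0$ and a bounded neighborhood $V$ of $z_0$ with $|u|\ge c$ on $V$; on $V$ the factors $(1+|z|^2)^{-a}$ and $e^{-2mH_S(z)}$ admit positive lower bounds (the latter because $H_S$ is bounded above on bounded sets), while $\int_V |z_1\cdots z_n|^{-2}\,d\lambda=+\infty$ since $|z_j|^{-2}$ is not locally integrable near $0$. This contradicts \eqref{eq:6.10'}, so $u\equiv 0$ on $\C^n\setminus\C^{*n}$ and \eqref{u_uni_bound} is vacuous there.

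\emph{Step 2 (large scales).} Fix $R>0$ with $\operatorname{supp}\bar\partial u\subset\overline{B(0,R)}$ and take $z_0\in\C^{*n}$ with $|z_0|>2R$. Every $z\in Q:=Q(z_0,\tfrac12)$ satisfies $|z_j|\ge|z_{0,j}|/2$, hence $Q\subset\C^{*n}$ and $|z|\ge|z_0|/2>R$; in particular $u$ is holomorphic on $Q$. The sub-mean-value for the plurisubharmonic function $|u|^2$ on the polydisc $Q$, whose volume equals $(\pi/4)^n\prod_j|z_{0,j}|^2=(\pi/4)^n e^{2\nu(z_0)}$, gives
\[
|u(z_0)|^2\ \le\ \frac{4^n e^{-2\nu(z_0)}}{\pi^n}\int_Q|u(z)|^2\,d\lambda(z).
\]
On $Q$ one has $\max_j|\Log z_j-\Log z_{0,j}|\le\log 2$, so by Lipschitz continuity of $\varphi_S$ the three quantities $H_S(z)$, $\nu(z)$ and $\log(1+|z|^2)$ each differ from their values at $z_0$ by bounded additive constants. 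Multiplying and dividing the integrand by the hypothesis weight therefore yields
\[
\int_Q|u|^2\,d\lambda\ \le\ C_1\,(1+|z_0|^2)^a e^{2mH_S(z_0)+2\nu(z_0)}\, I,
\]
where $I$ denotes the finite integral from \eqref{eq:6.10'}. Combining the two displays produces $|u(z_0)|\le C_2(1+|z_0|)^a e^{mH_S(z_0)}$.

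\emph{Step 3 (near region and conclusion).} For $z_0\in\C^{*n}$ with $|z_0|\le 2R$, continuity of $u$ on the compact ball $\overline{B(0,2R)}$ gives $|u(z_0)|\le M:=\|u\|_{\overline{B(0,2R)}}$, and the universal lower bound from the opening paragraph yields $|u(z_0)|\le M(1+|z_0|)^a e^{mH_S(z_0)}$. Setting $C:=\max(M,\,C_2\sqrt{I})$ finishes the proof. The main obstacle is that the mean-value estimate of Step 2 degenerates near $\C^n\setminus\C^{*n}$ (the volume factor $e^{2\nu(z_0)}$ tends to $0$) and also fails inside $\operatorname{supp}\bar\partial u$; these two zones are bypassed separately---the first by Step 1, using the singularity of the weight to force $u\equiv 0$ there, and the second by Step 3, using continuity together with the inequality $(1+|z|)^a e^{mH_S(z)}\ge 1$.
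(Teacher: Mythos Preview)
Your proof is correct and takes a genuinely different route from the paper's. The paper performs the global change of variables $\zeta\mapsto e^\zeta$, so that the Jacobian $e^{2\langle\mathbf 1,\xi\rangle}$ exactly cancels the factor $e^{-2\nu(z)}$ in the weight; the transformed function $v(\zeta)=u(e^\zeta)$ then has a finite weighted $L^2$ norm on the fundamental strip and bounded $\bar\partial v$, and the pointwise bound is obtained by quoting Lemma~5.3 of \cite{MagSigSigSno:2023}. The conclusion is stated for $z=e^\zeta\in\C^{*n}$, with the extension to the coordinate hyperplanes left to continuity.

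You instead stay in the original coordinates and make the Jacobian observation local: on a multiplicative polydisc $Q(z_0,\tfrac12)$ the volume factor $e^{2\nu(z_0)}$ in the sub-mean-value inequality is precisely what is needed to match the weight, and the remaining factors $(1+|z|^2)^{a}$, $e^{2mH_S}$, $e^{2\nu}$ vary only by bounded multiplicative constants on $Q$ because $\Log z-\Log z_0$ is uniformly bounded there. This yields the estimate for $|z_0|>2R$, and you patch the two remaining zones by a direct divergence argument on $\C^n\setminus\C^{*n}$ (forcing $u\equiv 0$ there) and by continuity on the compact ball $\overline{B(0,2R)}$. Your argument is self-contained---it avoids the external lemma---at the cost of the three-zone decomposition; the paper's argument is more uniform but relies on the cited reference. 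Two small remarks: your inequality $(1+|z|)^a e^{mH_S(z)}\geq 1$ uses $a\geq 0$, which is implicit in the paper's applications but not stated; and the constant produced in Step~2 depends on $m$ through the factor $e^{2m\,\mathrm{Lip}(\varphi_S)\sqrt n\log 2}$, which is harmless since $m$ is fixed.
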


\begin{proof}
The Jacobi determinant of 
$\zeta\mapsto e^\zeta=(e^{\zeta_1},\dots,e^{\zeta_n})$ viewed as a mapping
$\R^{2n}\to \R^{2n}$ is equal to 
$|e^{\zeta_1}\cdots e^{\zeta_n}|^2=e^{2\scalar{{\mathbf 1}}\xi}$.
We define $v\colon \C^n\to \C$ by
$v(\zeta)=u(e^{\zeta})$, write
$\zeta=\xi+i\eta$, for $\xi,\eta\in \R^n$. 
Let 
$$A=\set{\zeta= \xi+i\eta\in \C^n}{\xi\in \R^n\text{ and }\eta_j\in [-\pi,\pi],\, j=1,\dots, n}$$

By (\ref{eq:6.10'}) we get the estimate 
\begin{multline*}
\int_{A} |v(\zeta)|^2  (1+|e^{\zeta}|^2)^{-{a
}}
e^{-2m\varphi_S(\xi)} d\lambda(\zeta)\\
=  \int_{\C^n} |u(z)|^2(1+|z|^2)^{-{a
}}  
e^{-2mH_S(z)-2\nu(z)} d\lambda(z)
<+\infty.
\end{multline*}
Since  $\bar\partial u$ has compact support it follows that
there exists a constant $C_2>0$ such that 
\begin{equation}
|\bar \partial v(\zeta)|\leq C_2 \leq C_2e^{m\varphi_S(\xi)}, \qquad
\zeta=\xi+i\eta\in \C^n.
\end{equation}
By \cite{MagSigSigSno:2023}, Lemma {5.3}, there exists a constant $C_3>0$ such that 
for every $\zeta\in \C^n$, written as $\zeta=\xi+i\eta$ for $\xi, \eta\in \R^n$, we have
\begin{equation*}
  |v(\zeta)| \leq C_3\big(1+|e^\zeta|\big)^{a
  }
\sup_{w\in {\mathbb B}}e^{m\varphi_S(\xi+\Re\,  w)}
\leq C\big(1+|e^\zeta|\big)^{a
}e^{m\varphi_S(\xi)}, 
\end{equation*}
where $C=C_3\sup_{w\in {\mathbb B}}e^{m\varphi_S(\Re \, w)}$ and $\B$ is the open euclidean unit ball in $\C^n$.
We change the coordinates back to $z=e^\zeta$, use the fact that
$\Log\, z=\xi$ and conclude (\ref{u_uni_bound}). 
\end{proof} \bigskip

\medskip

We now see that an entire function  with a finite estimate of the form \eqref{eq:5.8} with a weight $\varphi$ that grows like $2mH_S+2\nu+a\log(1+|z|^2)$ is a member of $\P^S(\C^n)$, if $a$ is smaller than the distance $d_m$ between $mS$ and $\N^n\setminus mS$ in the $L^1$-norm. It suffices then also if $a$ is smaller than the euclidean distance $\operatorname{dist}(mS,\N^n\setminus mS)$. One difficulty is that $d_m$ is dependent on $m$ so it is not clear that one can choose a number $a$ that is smaller than this distance for all $m$ large enough. 

\medskip

The subset $S_m=\operatorname{ch}(S\cap(1/m)\N^n)$ of $S$ is such that the polynomial space $\P^{S_m}_m(\C^n)$ is the same as  $\P^{S}_m(\C^n)$. The set $mS_m$ has a larger distance than $mS$ to the next lattice point. Also $mS_m$ is an \emph{integral polytope}, meaning that its vertices are lattice points.  We can estimate the euclidean distance from a convex integral polytope to the next lattice point using methods from linear algebra. 

\begin{lemma}\label{integral_polytope}
    Let $P$ be a convex integral polytope with nonempty interior contained in some box $[0,M]^n$ where $M>0$. Then $$\operatorname{dist}(P, \Z^n\setminus P)\geq 1/(\sqrt n (n-1)! M^{n-1}).$$
\end{lemma}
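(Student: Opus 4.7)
\medskip

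\noindent\textbf{Proof plan for Lemma \ref{integral_polytope}.} The plan is to exploit the integrality of $P$ to write it as a finite intersection of rational half-spaces whose defining data are integer vectors of controlled size, and then combine this with the fact that $\Z^n$-points outside $P$ must violate one of these half-space inequalities by an integer amount. First I would decompose $\partial P$ into its facets. Each facet $F$ of $P$ lies in an affine hyperplane spanned by at least $n$ affinely independent vertices $v_0,v_1,\dots,v_{n-1}\in \Z^n\cap[0,M]^n$ of $P$. The hyperplane through these vertices is the zero set of
\begin{equation*}
    x\longmapsto \det\bigl[x-v_0,\,v_1-v_0,\,\dots,\,v_{n-1}-v_0\bigr]
    \;=\;\langle x-v_0,\xi_F\rangle,
\end{equation*}
where the components of $\xi_F\in\Z^n$ are, up to sign, the $(n{-}1)\times(n{-}1)$ minors of the matrix with columns $v_i-v_0$. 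Choosing the sign so that $P$ lies on the side $\langle x,\xi_F\rangle\le c_F$ with $c_F:=\langle v_0,\xi_F\rangle\in\Z$, I obtain the representation
\begin{equation*}
    P=\bigcap_{F}\bigl\{x\in\R^n\;:\;\langle x,\xi_F\rangle\le c_F\bigr\},
\end{equation*}
where $F$ ranges over the (finitely many) facets of $P$.

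The second step is to estimate $\|\xi_F\|_2$. Each entry $(v_i-v_0)_k$ lies in $[-M,M]$, so by Leibniz's formula each $(n{-}1)\times(n{-}1)$ minor is a sum of $(n{-}1)!$ products of $n-1$ numbers of modulus $\le M$. Hence every component of $\xi_F$ has absolute value at most $(n-1)!\,M^{n-1}$, giving
\begin{equation*}
    \|\xi_F\|_2\;\le\;\sqrt{n}\,(n-1)!\,M^{n-1}.
\end{equation*}

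Finally I would conclude as follows. Fix $y\in\Z^n\setminus P$; then $\langle y,\xi_F\rangle>c_F$ for some facet $F$. Because $y$, $\xi_F$ and $c_F$ are all integral, this strict inequality upgrades to $\langle y,\xi_F\rangle\ge c_F+1$. For any $x\in P$ we have $\langle x,\xi_F\rangle\le c_F$, so $\langle y-x,\xi_F\rangle\ge 1$, and Cauchy--Schwarz yields
\begin{equation*}
    \|y-x\|_2\;\ge\;\frac{1}{\|\xi_F\|_2}\;\ge\;\frac{1}{\sqrt{n}\,(n-1)!\,M^{n-1}}.
\end{equation*}
Taking the infimum over $x\in P$ and $y\in\Z^n\setminus P$ gives the claimed bound. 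The only mildly delicate point is ensuring that the facet normals can be taken to be integral with integral offsets, which is precisely what the assumption that $P$ is an integral polytope with nonempty interior provides; the rest is a careful application of the Leibniz determinant expansion.
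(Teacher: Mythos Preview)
Your proof is correct and follows essentially the same approach as the paper's: both construct facet normals as determinants in the differences of $n$ affinely independent integer vertices, bound each component by $(n-1)!\,M^{n-1}$ via the Leibniz expansion, and then use integrality to force the violated linear form to jump by at least $1$. Your version is in fact slightly more streamlined, since by applying the half-space inequality $\langle x,\xi_F\rangle\le c_F$ to every $x\in P$ and invoking Cauchy--Schwarz you avoid the paper's case analysis on whether the lattice point lies on the facet hyperplane.
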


\begin{proof}
    Let $P=\ch\{a_1,\dots, a_N\}$ with vertices $a_j\in \N^n$ for $j=1,\dots, N$.  The boundary of $P$ lies in the union of its boundary hyperplanes, each passing through some $(n+1)$-tuple of the vertices of $P$ in a general position. Since $P$ has nonempty interior such an $n$-tuple will exist.     
    
    Let $a_{\ell_1}, \dots , a_{\ell_n}$ be any such tuple and $A$ be the hyperplane through those points. Let $v_j= a_{\ell_j}-a_{\ell_1}$ for $j=2,\dots, n$. A normal $\eta=(\eta_1,\dots, \eta_n)$ to $A$ can be found with the coordinates $\eta_j= \det(e_j, v_2, \dots, v_n)$, where $e_j$ is the $j$-th unit vector. 

    For each vector $v_j$, $j=2,\dots, n$, each coordinate $v_{j,k}$, $k\in [n]$ is an integer with $|v_{j,k}|\leq M$. That implies that $\eta_j$ is an integer with $|\eta_j|\leq (n-1)! M^{n-1}$. Hence $|\eta|\leq \sqrt{n}(n-1)!M^{n-1}$.

    The distance from $x\in \Z^n\setminus P$ to $A$ is $|\scalar{\eta}{x}-\scalar{a_1}{x}|/|\eta|$. If $x\notin A$ then this distance is at least $1/|\eta|$. If $x\in A$ then the point of $P$ closest to $x$ is on some lower dimensional face of $P$. That face lies on the intersection of $A$ with some other boundary hyperplane $A'$ of $P$ with $x\notin A'$. The distance of $x$ from $P$ is therefore greater or equal to its distance to the boundary hyperplanes of $P$ that $x$ does not lie on, which is at least $ 1/(\sqrt{n}(n-1)! M^{n-1})$.\end{proof}
\bigskip

This lemma implies that the distance of $mS_m$ to the next lattice point grows slowly enough for our purposes. 

\begin{corollary}\label{dist^{1/m}}
    Let $S_m=\operatorname{ch}(S\cap (1/m)\Z^n)$. Then 
    $$\operatorname{dist}(mS_m, \Z^n\setminus (mS_m))^{1/m}\to 1 \quad \text{ as }m\to \infty.$$
\end{corollary}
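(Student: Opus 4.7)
The plan is to apply Lemma \ref{integral_polytope} directly to the polytope $P=mS_m$. By construction $mS_m=\operatorname{ch}(mS\cap\Z^n)$ is a convex integral polytope, and since $S$ is a compact subset of $\R^n_+$ there is an $R>0$ with $S\subset[0,R]^n$, giving $mS_m\subset[0,mR]^n$; thus I would take $M=mR$ in the lemma. The nontrivial hypothesis is nonempty interior, and here the standing assumption $\overline{S\cap\Q^n}=S$ enters: by density the interior of $S$ (when nonempty) contains $n+1$ affinely independent rational points, and for $m$ large these all lie in $S\cap(1/m)\Z^n$, so $S_m$, and hence $mS_m$, has nonempty interior. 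If $S$ has empty interior in $\R^n$ one works in its rational affine hull via the linear map $L$ of \cite{MagSigSig:2023}, Theorem~1.2, reducing to the full-dimensional case in lower dimension.

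Granted these hypotheses, Lemma \ref{integral_polytope} yields the lower bound
$$\operatorname{dist}(mS_m,\Z^n\setminus mS_m)\;\geq\;\frac{1}{\sqrt n\,(n-1)!\,(mR)^{n-1}}.$$
For the matching upper bound I would observe that $0\in S$ forces $0\in mS_m$, while $mS_m\subset\R^n_+$ forces $(-1,0,\dots,0)\in\Z^n\setminus mS_m$, so the distance is at most $1$ uniformly in $m$. Taking $m$-th roots, the lower bound factors as
$$\bigl(\sqrt n\,(n-1)!\,R^{n-1}\bigr)^{-1/m}\cdot m^{-(n-1)/m},$$
and each factor tends to $1$ (a positive constant raised to a vanishing exponent, together with $m^{-(n-1)/m}=\exp(-(n-1)\log m/m)\to 1$); a squeeze then gives the claimed convergence. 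The only mildly delicate point is the verification of nonempty interior; the rest is a direct appeal to the previous lemma and a routine asymptotic, so no step looks like a serious obstacle.
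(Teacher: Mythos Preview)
Your argument is correct and follows essentially the same route as the paper: apply Lemma~\ref{integral_polytope} to the integral polytope $mS_m\subset[0,mR]^n$ (the paper uses $R=\varphi_S(\mathbf{1})$), note the trivial upper bound $\leq 1$, and take $m$-th roots. Your treatment is in fact more careful than the paper's, which applies Lemma~\ref{integral_polytope} without explicitly verifying the nonempty-interior hypothesis; your observation that the density condition $\overline{S\cap\Q^n}=S$ yields affinely independent rational points, together with the reduction via $L$ when $S$ is lower-dimensional, fills that gap.
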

\begin{proof}
    Clearly $\operatorname{dist}(mS_m, \Z^n\setminus (mS_m))\leq 1$ for all $m\in \N$. Each $mS_m$ is an integral polytope contained in $[0, m\varphi_S(\mathbf{1})]^n$, so by Lemma \ref{integral_polytope}, we have\\
    
    \hfill $\operatorname{dist}(mS_m, \Z^n\setminus (mS_m))^{1/m}\geq 1/(\sqrt{n}(n-1)!m\varphi_S(\mathbf{1}))^{1/m}\to 1. 
    $\hfill
\end{proof} 
\bigskip

A good candidate as a weight function $\varphi$ in \eqref{eq:5.8} is some plurisubharmonic function with the same growth as $2mH_{S_m}+2\nu$, in light of Proposition \ref{cor:u_uni_bound}, which is preferably large outside the set $K$. The function $V^S_K$ is therefore an ideal candidate as a weight. We will need to establish a positive lower bound on $V^S_K$ outside $K$, in other words that we should be able to escape one of its sublevel sets. A sublevelset of $V^S_{K}$ would be open  if $V^S_{K}$ were upper-semicontinuous, 
so we replace $K$ by a slightly fatter $S$-convex compact subset of $\Omega$ with that property.

\begin{lemma}\label{V_halfsamfellt}
    If $K$ is compact and $S$-convex, $\Omega\subseteq \C^{*n}$ is a neighborhood of $K$ and 
    $f\in \O(\Omega)$, then there is a $\delta>0$ such that the $S$-hull of $K_\delta=K+B(0,\delta)$ 
    is contained in $\Omega$, and $K'=\widehat{K}^S_\delta$ is such that $V^S_{K'}$ is continuous on $\C^{*n}$. 
\end{lemma}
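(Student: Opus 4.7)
The plan is to handle the two conclusions in turn.

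For the existence of $\delta$ with $\widehat{K_\delta}^S \subset \Omega$, I would use the $S$-convexity of $K$ to separate points outside $K$ and then pass to a uniform $\delta$ via a finite covering argument. For any $z \notin K = \widehat{K}^S$, $S$-convexity supplies $p \in \P^S(\C^n)$ with $|p(z)| > \|p\|_K$, and since $\|p\|_{K_\delta} \to \|p\|_K$ as $\delta \to 0$ by continuity of $p$ and the Hausdorff convergence $K_\delta \to K$, the point $z$ is excluded from $\widehat{K_\delta}^S$ for small $\delta$. To make this uniform, I would fix a bounded open neighborhood $U$ of $K$ with $\overline{U}$ a compact subset of $\Omega$, cover the compact boundary $\partial U$ by finitely many open sets of the form $\{|p_i| > \|p_i\|_K + \varepsilon\}$, and pick $\delta$ small enough that $\|p_i\|_{K_\delta} < \|p_i\|_K + \varepsilon$ for each $i$; this yields $\widehat{K_\delta}^S \cap \partial U = \emptyset$. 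To preclude stray components of the hull lying outside $\overline{U}$, I would use that $S$ must be a convex body --- since $K$ is a nonempty compact $S$-convex subset of $\C^{*n}$, Proposition \ref{Sempty_interior=>all_hulls_unbounded} rules out the empty-interior case --- and appeal to the properness of $F(z) = (z^{\alpha_1}, \dots, z^{\alpha_n})$ from Lemma \ref{proper} for linearly independent $\alpha_i \in \R_+S \cap \N^n$ to trap $\widehat{K_\delta}^S$ inside a fixed compact subset of $\C^{*n}$ for all small $\delta$.

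For the continuity of $V^S_{K'}$, observe first that $\|p\|_{K'} = \|p\|_{K_\delta}$ for every $p \in \P^S(\C^n)$ by the definition of the $S$-hull, hence $\Phi^S_{K'} = \Phi^S_{K_\delta}$ and therefore $V^S_{K'} = V^S_{K_\delta}$ on $\C^{*n}$ by Theorem 1.1 of \cite{MagSigSig:2023}. The set $K_\delta$ contains the Euclidean ball $B(z,\delta)$ around every $z \in K$; each such ball is a unit ball in some norm, so by Lemma 5.2 and Propositions 5.3--5.4 of \cite{MagSigSigSno:2023} (as used in Lemma \ref{V^S_K'|_K'=0}) we have $V^{S*}_{B(z,\delta)}|_{B(z,\delta)} = 0$. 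The monotonicity $V^{S*}_{K_\delta} \leq V^{S*}_{B(z,\delta)}$ on $B(z,\delta)$ combined with $V^{S*}_{K_\delta} \geq 0$ forces $V^{S*}_{K_\delta}|_{K_\delta} = 0$, so $K_\delta$ is itself $L^S$-regular. Then $V^{S*}_{K_\delta}$ is a valid competitor for $V^S_{K_\delta}$, giving $V^S_{K_\delta} = V^{S*}_{K_\delta}$, which is upper semicontinuous; lower semicontinuity of $V^S_{K_\delta} = \log \Phi^S_{K_\delta}$ on $\C^{*n}$ is automatic, as $\Phi^S_{K_\delta}$ is a supremum of continuous functions $|p|^{1/m}$. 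Combining, $V^S_{K'}$ is continuous on $\C^{*n}$.

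The main obstacle is the first step, specifically ruling out stray components of $\widehat{K_\delta}^S$ sitting outside $\overline{U}$. The covering argument controls only $\partial U$; without a global a priori bound on the hull, pieces of it could in principle lie far from $U$ or drift toward the coordinate hyperplanes. The properness of $F$ supplies exactly the needed global bound, after which continuity of $V^S_{K'}$ is a fairly clean consequence of the standard local regularity of Euclidean balls.
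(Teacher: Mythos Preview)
Your treatment of the continuity of $V^S_{K'}$ is the same as the paper's: both pass through $\Phi^S_{K'}=\Phi^S_{K_\delta}$ and then invoke Lemma~5.2 and Propositions~5.3--5.4 of \cite{MagSigSigSno:2023}. For the inclusion $\widehat{K_\delta}^S\subset\Omega$ the paper uses the monotone convergence $\Phi^S_{K_\delta}\nearrow\Phi^S_K$ from \cite{MagSigSigSno:2023}, Proposition~4.8(iii), so that the open sets $U_\delta=\{\Phi^S_{K_\delta}>1\}$ exhaust $\C^n\setminus K$ and a single $U_\delta$ already covers a fixed compact shell $B\setminus\Omega$; your finite cover of $\partial U$ by superlevel sets of individual polynomials is an equivalent manoeuvre.

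You are right to isolate the stray-component issue, but the properness fix does not close it. The hull condition supplies only \emph{upper} bounds $|z^{\alpha_j}|\leq\|z^{\alpha_j}\|_{K_\delta}$, so $F(\widehat{K_\delta}^S)$ is only known to land in a closed polydisk, not in a compact subset of $\C^{*n}$; Lemma~\ref{proper} gives properness of $F$ only as a map $\C^{*n}\to\C^{*n}$, and the preimage of a polydisk meeting the coordinate axes is typically unbounded in $\C^n$ --- e.g.\ with $\alpha_1=(1,0)$, $\alpha_2=(1,1)$ in $\C^2$ the set $\{|z_1|\leq M,\ |z_1z_2|\leq M\}$ contains all of $\{0\}\times\C$. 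To make this route work you would need lower bounds on the $|z^{\alpha_j}|$ over $\widehat{K_\delta}^S$, which the bare hull definition does not provide. The paper's own write-up is in fact silent at exactly this point --- it passes from $B\setminus\Omega\subset U_\delta$ directly to $K'\subset\Omega$ without arguing that $K'\subset B$ --- so your diagnosis of the difficulty is well placed even if the proposed remedy is incomplete.
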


\begin{proof}
     The function $\Phi^S_{K}$ is lower-semicontinuous for any choice of the set $K$, since it can be defined as the supremum of a family of continuous functions, by \cite{MagSigSigSno:2023}, Proposition 2.2. This implies that the sets $U_\delta= \set{z\in \C^{n}}{\Phi^S_{K_\delta}(z)>1}$ are open.  
     
     By \cite{MagSigSigSno:2023}, Proposition 4.8 (iii),  $\Phi^S_{K_\delta}\nearrow \Phi^S_K$ pointwise as $\delta\searrow 0$, which implies that $\bigcup_{\delta>0}U_\delta=\set{z\in \C^n}{\Phi^S_{K}(z)>1}= \C^n\setminus K$.  We may assume that $\Omega$ is bounded. Then there exists a compact 
    $B\subset \C^n$ that contains $\Omega$. Now $\bigcup_{\delta>0}U_\delta$ forms an increasing open cover of the compact set $B\setminus \Omega$, so there exists a $\delta>0$ such that $B\setminus \Omega\subseteq U_\delta$, which implies that $K':= \set{z\in \C^{n}}{\Phi^S_{K_\delta}(z)=1}\subset \Omega$. 
    
    Now $K'$ is an $S$-convex compact set and it is easy to see that $\Phi^S_{K'}= \Phi^S_{K_\delta}$. Furthermore, by \cite{MagSigSig:2023}, Theorem 1.1, $V^S_{K'}= V^S_{K_\delta}$ on $\C^{*n}$.  By \cite{MagSigSigSno:2023}, Lemma 5.2 and Propositions 5.3 and 5.4, $V^{S}_{K'}$ is continuous on $\C^{*n}$.
\end{proof}\\

We next show that a neighborhood of $K$ contains some sublevel set  of $V^{S*}_K$. Furthermore, if $V^{S*}_K|_K=0$, then the sublevel set produced in the following lemma is a neighborhood of $K$ and a sublevel set of $V^{S}_K$. 

\begin{lemma}\label{sublevelsetV}
    If $\Omega$ is a neighborhood of a $S$-convex compact set $K\subset\C^{n}$, then there is an $R>1$ such that $X_R=\set{z\in \C^n}{V^{S*}_K<\log R}$ is a relatively compact subset of $\Omega$.
\end{lemma}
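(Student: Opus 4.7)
The plan is to bound the sublevel set $X_R$ by a closed sublevel set of the lower semi-continuous function $\Phi^S_K$ and then apply a nested-intersection argument. First I would shrink $\Omega$ to a bounded open neighborhood of $K$, which loses no generality since relative compactness in any smaller such neighborhood implies relative compactness in the original $\Omega$. The pointwise inequalities $V^{S*}_K \geq V^S_K \geq \log\Phi^S_K$ hold on all of $\C^n$: the first is the definition of the upper semi-continuous regularization, and the second follows because for every $p \in \P^S_m(\C^n)$ with $\|p\|_K > 0$ the function $\log(|p|^{1/m}/\|p\|_K^{1/m})$ belongs to $\L^S(\C^n)$ and is $\leq 0$ on $K$. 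Combined with the identity $\widehat K^S = \{z : \Phi^S_K(z)=1\} = K$ (from the paragraph preceding Proposition \ref{V^S_KformulaReinhardt} together with the $S$-convexity of $K$), this gives $X_R \subseteq Z_R := \{z \in \C^n : \Phi^S_K(z) \leq R\}$. The set $Z_R$ is closed by lower semi-continuity of $\Phi^S_K$, and the family $\{Z_R\}_{R>1}$ decreases to $\bigcap_{R>1} Z_R = K \subseteq \Omega$.

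The central step is to show that $Z_{R_0}$ is compact for some $R_0>1$. Since $K = \widehat K^S$ is nonempty and compact, and in the setting of Theorem \ref{bigTheorem} meets $\C^{*n}$, Proposition \ref{Sempty_interior=>all_hulls_unbounded} forces $S$ to be a convex body. Choosing linearly independent lattice points $\alpha_1,\ldots,\alpha_n \in \R_+S \cap \N^n$, the normalized monomials $z^{\alpha_j}/\|z^{\alpha_j}\|_K$ lie in $\P^S(\C^n)$ and give the lower bound $\Phi^S_K(z)^m \geq \max_j |z^{\alpha_j}|/\|z^{\alpha_j}\|_K$. By Lemma \ref{proper} the associated map $F(z) = (z^{\alpha_1},\ldots,z^{\alpha_n})$ is proper from $\C^{*n}$ to $\C^{*n}$, which forces $\Phi^S_K(z) \to \infty$ as $z$ tends to infinity along directions where $\max_j |z^{\alpha_j}|$ grows. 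Directions in $\C^n$ where the individual monomials $|z^{\alpha_j}|$ remain bounded, including those approaching $\C^n \setminus \C^{*n}$ at infinity, are handled by supplementing with polynomials of the form $c_j - z^{\alpha_j}$ in the style of Proposition \ref{compact_hull}, together yielding $\Phi^S_K > R_0$ outside a sufficiently large ball and hence compactness of $Z_{R_0}$.

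Granted this, the decreasing family of compact sets $\{Z_R \setminus \Omega\}_{R \in (1, R_0]}$ has empty intersection $K \setminus \Omega = \emptyset$, so Cantor's nested-intersection principle provides $R_1 \in (1, R_0]$ with $Z_{R_1} \subseteq \Omega$. Since $X_{R_1} \subseteq Z_{R_1}$ and $Z_{R_1}$ is closed, $\overline{X_{R_1}} \subseteq Z_{R_1} \subseteq \Omega$ is compact, proving the relative compactness of $X_{R_1}$ in $\Omega$. The main obstacle is the compactness of $Z_{R_0}$: one must control $\Phi^S_K$ simultaneously in directions where $|z|$ grows in $\C^n$ and in directions approaching the coordinate axes $\C^n \setminus \C^{*n}$, which is most delicate when $\R_+S$ does not contain any of the unit basis vectors.
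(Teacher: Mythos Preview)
Your strategy of sandwiching $X_R$ inside $Z_R=\{\Phi^S_K\le R\}$ and then running a nested--intersection argument is a legitimate alternative to the paper's route, and steps $X_R\subset Z_R$ and $\bigcap_{R>1}Z_R=K$ are correct. The gap is in the compactness of $Z_{R_0}$: the explicit polynomials you propose do not suffice. Take $S=\ch\{(0,0),(1,1),(2,1)\}$ with $\alpha_1=(1,1)$, $\alpha_2=(2,1)$; along the path $z(t)=(1/t,t)$ one has $|z(t)^{\alpha}|\le 1$ for \emph{every} $\alpha\in\R_+S\cap\N^2$, so all the monomials $z^{\alpha_j}$ and all polynomials $c_j-z^{\alpha_j}$ stay bounded while $|z(t)|\to\infty$. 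The Proposition~\ref{compact_hull} trick you invoke relies on $K\subset\R^{*n}_+$, which makes each $w^{\alpha_j}$ real and positive on $K$; for a general $K\subset\C^{*n}$ the image $\{w^{\alpha_j}:w\in K\}$ may surround the origin in $\C$, and then no linear polynomial $c_j-z^{\alpha_j}$ can separate $\{z^{\alpha_j}=0\}$ from $K$. You flag this as ``the main obstacle'', but the sketch you give does not overcome it.

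The paper sidesteps the issue entirely. Rather than pushing $\Phi^S_K$ to infinity, it covers the compact set $B\setminus\Omega$ (for bounded $\Omega$ and compact $B\supset\Omega$) by finitely many sets $\{|p_j|^{1/m_j}>r_j\}$, sets $v=\max_j\log|p_j|^{1/m_j}$ and $R=\min_j r_j$, and then \emph{glues}: $u=v$ on $B$ and $u=\max(v,\log R')$ on $\C^n\setminus B$, with $\log R<\log R'<\min_{B\setminus\Omega}v$. This yields a single competitor $u\in\L^S(\C^n)$ with $u|_K\le 0$ and $u\ge\log R'$ on all of $\C^n\setminus B$, so $X_R\subset\{u\le\log R\}$ is at once a closed bounded subset of $\Omega$. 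The gluing with a constant is precisely the missing idea in your sketch: it upgrades control on a compact shell to control on the whole unbounded exterior, with no need to analyse specific escape directions. Note too that the paper's argument uses only that $K$ is compact and $S$-convex; your reduction to $S$ a convex body via Proposition~\ref{Sempty_interior=>all_hulls_unbounded} imports the hypothesis $K\cap\C^{*n}\neq\emptyset$, which the lemma does not assume.
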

\begin{proof}
We may assume that $\Omega$ is bounded. Let $B\subset\C^n$ be any compact set that contains $\Omega$. For every $z\in B\setminus \Omega$ there exists a $p_z\in \P^S_{m_z}(\C^n)$ for some $m_z\in \N$ with $\|p_z\|_K=1$ and with $|p_z(z)|>1$. Let $r_z$ be any number with $1<r_z<|p_z(z)|^{1/m_z}$. Since $B\setminus \Omega$ is compact there exist finitely many $z_1,\dots,z_N$ such that $p_j=p_{z_j}$, $m_j=m_{z_j}$ and $r_j=r_{z_j}$ satisfy 
$$B\setminus \Omega\subset \set{w\in \C^n}{|p_{j}(w)|^{1/m_j}>r_{j},\;j=1,\dots,N}.$$

Let $v= \max_{j=1,\dots, N} \log|p_{j}|^{1/m_{j}}$ and $R=\min\{r_{1},\dots,r_{N}\}$. Then $v|_{B\setminus \Omega}>R$ and since  $B\setminus \Omega$ is compact and $v$ is continuous it takes some lowest value $\min_{B\setminus \Omega}v>R$. Let $\min_{B\setminus \Omega}v>R'>R$.  The function 
$$u(z)=\begin{cases}
    \max\{\log R', v(z)
    \},\quad & z\in \C^n\setminus B\\
    v(z), \quad & z\in  B.\\
\end{cases}$$
is plurisubharmonic by the gluing theorem. Since $u\in \L^S(\C^n)$ and $u|_K\leq 0$ we can conclude that $X_R\subset \set{z\in \C^n}{u(z)\leq\log R}$, which is a closed bounded subset of $\Omega$. \\   
\textcolor{white}{.}
\end{proof}
\bigskip

We are now ready to start the proof of our main result.\\

\textbf{Proof of Theorem \ref{bigTheorem}:} Let $\Omega$ be the domain of $f$. By Lemma \ref{V_halfsamfellt} and \cite{MagSigSigSno:2023}, Theorem 1.1, we may assume that on $\C^{*n}$ we have $V^S_K$ continuous and  $V^S_K=\log\Phi^S_K$. By Lemma \ref{sublevelsetV} there is an $R>1$ such that  $X_R=\set{z\in \C^n}{V^{S}_K<\log R}$ is an open relatively compact subset of $\Omega\cap \C^{*n}$. 

Let $1<r<R$ and $0<\gamma<1$ be such that $\gamma^2>1/r$. Now $\Phi^{S}_{K,m}\nearrow \Phi^S_K$ uniformly on the bounded set $X_R\subset\C^{*n}$ as $m\to \infty$ by \cite{MagSigSigSno:2023}, Proposition 2.2, so for some $m_0\in \N$ we have $\log\Phi^{S}_{K,m}>\log \Phi^S_K-\log(1/\gamma)$
on $X_R$ for all $m\geq m_0$. By Corollary \ref{dist^{1/m}}, there is an $m_1\geq m_0$ such that $\operatorname{dist}(mS_m,\N^n\setminus mS_m)^{1/m}>1/2$ for all $m\geq m_1$. Now $V^S_K$ is continuous on $X_R$, so $X_{r}= \set{z\in \C^n}{V^{S}_K<\log r}$ is relatively compact in $X_R$. We may therefore  
take $\chi\in \mathcal C^\infty_0(X_{R})$ with $0\leq \chi\leq 1$,
and $\chi=1$ in $X_{r}$. Denote $V_m=\log\Phi^{S*}_{K,m}$ and $\nu(z)= \scalar{\mathbf{1}}{\Log\,z}$.

Define for every 
$z\in \C^n$  
\begin{align*}
     \psi_m(z)&=2mV_m(z)+2\nu(z)+ 2^{-m}\log(1+|z|^2), 
     \\
\eta_m(z)&=\psi_m(z)-2\log(1+|z|^2).
\end{align*}
Since $f$ is holomorphic in a neighborhood of $\supp \chi$, we have 
$\|f\bar\partial \chi\|_{\eta_m}<+\infty$ and $\bar\partial (f\bar\partial \chi)=0$. 

By Theorem \ref{thm:5.1},
there exists a solution 
$u_m\in \mathcal C^\infty(\C^n)$ of $\bar\partial u_m=f\bar\partial \chi$
satisfying
\begin{equation} \label{eq:3.3}
  \|u_m\|_{\psi_m}^2=\int_{\C^n}|u_m|^2(1+|z|^2)^{-1/{2^m}}
e^{-2mV_m(z)-2\nu(z)}\, d\lambda\\
\leq 2\|f\bar\partial \chi\|_{\eta_m}^2.
\end{equation}

 Now $V_m\leq V^{S_m*}_K \leq H_{S_m}+c_V$ for some constant $c_V\in \R$ by \cite{MagSigSigSno:2023}, Proposition 4.5. Therefore
\begin{equation}
  \label{eq:6.10}
\int_{\C^n} |u(z)|^2(1+|z|^2)^{-1/{2^m}}  
e^{-2mH_{S_m}(z)-2\nu(z)} d\lambda(z)
\leq e^{2mc_V}\|u\|_{\psi_{m}}^2<+\infty.
\end{equation}
We let $p_m=f\chi-u_m\in \O(\C^n)$.
By Corollary \ref{cor:u_uni_bound},   $|p_m(z)| \leq C(1+|z|)^{1/{2^m}} e^{mH_{S_m}(z)}$ for all $z\in \C^n$ for some constant $C>0$.  Theorem 3.6
from \cite{MagSigSigSno:2023} then implies that
$p_m\in {\mathcal P}^{S_m}_m(\C^n)=  {\mathcal P}^{S}_m(\C^n)$ for all $m\geq m_1$. Our goal is to show that $\|u_m\|_K= \|f-p_m\|_K$ can be made arbitrarily small by taking $m\geq m_1$ large enough.

Denote the mean of $v\in L^1_{\operatorname{loc}}(\C^n)$ 
over a ball $B(z,\delta)$ with center $z$ and radius $\delta>0$ by 
$$
{\mathcal M}_\delta v(z)=\dfrac 1{\Omega_{2n}\delta ^{2n}}\int_{B(z,\delta)}v\, d\lambda
$$
where $\Omega_{2n}$ is the volume of the unit ball in $\mathbb{R}^{2n}$. Since $u_m\in \O(\C^n \setminus \operatorname{supp}
\bar\partial \chi)$, the mean value theorem implies that for every 
$z\in \C^n$  we have
$u_m(z)={\mathcal M}_\delta u_m(z)$
and consequently, by the Cauchy-Schwarz inequality
\begin{align}\label{eq:3.6}
  |u_m(z)|&\leq \Omega_{2n}^{-1}\delta^{-2n} \int_{B(z,\delta)}|u_m| \,
            d\lambda \nonumber\\
&=\Omega_{2n}^{-1}\delta^{-2n} 
\int_{B(z,\delta)}|u_m|e^{-\psi_m/2}\cdot e^{\psi_m/2} \, d\lambda \nonumber \\
&\leq \Omega_{2n}^{-1}\delta^{-2n} \|u_m\|_{\psi_m}
\Big(\int_{B(z,\delta)} e^{\psi_m} d\lambda \Big)^{1/2}\nonumber \\
&\leq  \Omega_{2n}^{-1/2}\delta^{-n} \sqrt{2} 
\|f\bar\partial \chi\|_{\eta_m}\Big({\mathcal M}_\delta
\big(e^{\psi_m}\big)(z)\Big)^{1/2}.
\end{align}

Let $0<\varepsilon<\log (\gamma r)$ and denote $Y=\operatorname{supp} \bar\partial\chi$. Let $0<\delta<\operatorname{dist}(K,Y)$ be such that $V^{S}_K(z+w)\leq  \varepsilon$ for all $z\in K$ and 
 all $w\in B(0,\delta)$. 
 Furthermore $$V_m(\zeta)\leq V^{S}_K(\zeta)\leq \varepsilon, \qquad z\in K, \, \zeta \in B(z,\delta).$$ 

Since the set $\overline X_r\subset \C^{*n}$ is compact, we have  $B= \max_{z\in \overline X_r}|z|<+\infty$ and 
$b=\min_{z\in \overline{X}_r} \nu(z)>-\infty.$ We have 
\begin{align}\label{eq:3.8}
  {\mathcal M}_\delta\big(e^{\psi_m}\big)(z)
  &=\dfrac 1{\Omega_{2n}\delta^{2n}}
    \int_{B(z,\delta)} (1+|\zeta|^2)^{1/{2^m}} e^{2\nu(z)}  e^{2mV_m(\zeta)}  \, d\lambda(\zeta) \nonumber\\
&\leq (1+B^2)^{1/{2^m}}e^{2n\log B} e^{2m\varepsilon}. 
\end{align}

Since $V_m\geq \log(\gamma r)$ holds on $\C^n\setminus X_r$ for all $m\geq m_1$, we observe next that
\begin{align}
    \label{eq:3.7}
  \|f\bar\partial \chi\|_{\eta_m}
&= \Big(\int_{Y}|f\bar\partial \chi|^2(1+|\zeta|^2)^{2-1/{2^m}}e^{-2mV_m(z)-2\nu(z)}
\, d\lambda \Big)^{1/2}\nonumber\\
&\leq \Big(\int_{Y}|\bar\partial \chi|^2
\, d\lambda \Big)^{1/2}\cdot (1+B^2)^{2-1/{2^m}}e^{-2b}\cdot \frac{\|f\|_{X_r}}{(\gamma r)^{m}}. 
\end{align}

Denoting $c_\chi= \int_{Y}|\bar\partial \chi|^2
\, d\lambda<+\infty$, we see 
that \eqref{eq:3.6}, \eqref{eq:3.8} and \eqref{eq:3.7} together imply that 
$$|u_m(z)|\leq \Omega_{2n}^{-1/2}\delta^{-n} \sqrt{2} 
c_\chi (1+B^2)^2 e^{n\log B-2b}\|f\|_{X_r} \left(\frac{e^\varepsilon}{\gamma r}\right)^m.$$
The constants $\delta, B,b$ and $c_\chi$ were chosen independent of $m$, and $|e^\varepsilon/\gamma r|<1$. Therefore $\|u_m\|_K$ can be made arbitrarily small by choosing $m$ large enough. We conclude that  $p_m$ tend to $f$ uniformly on $K$ as $m$ tends to infinity.
\hfill $\square$

{\small 
\bibliographystyle{siam}
\bibliography{rs_bibref}

\begin{thebibliography}{1}

\bibitem{BosLev:2018}
{\sc L.~Bos and N.~Levenberg}, {\em Bernstein-{W}alsh theory associated to convex bodies and applications to multivariate approximation theory}, Comput. Methods Funct. Theory, 18 (2018), pp.~361--388.

\bibitem{Hormander:SCV}
{\sc L.~H{\"o}rmander}, {\em {An introduction to complex analysis in several variables}}, North-Holland Publishing Co., Amsterdam, 3rd~ed., 1990.

\bibitem{Hormander:convexity}
\leavevmode\vrule height 2pt depth -1.6pt width 23pt, {\em {Notions of convexity}}, vol.~127 of {Progress in Mathematics}, Birkh{\"a}user Boston Inc., Boston, MA, 1994.

\bibitem{MagSigSig:2023}
{\sc B.~S. Magn{\'u}sson, {\'A}.~E. Sigurðard{\'o}ttir, and R.~Sigurðsson}, {\em Polynomials with exponents in compact convex sets and associated weighted extremal functions: The {S}iciak-{Z}akharyuta theorem}, Complex Analysis and its Synergies, 10:12 (2024).

\bibitem{MagSigSno:2023}
{\sc B.~S. Magn{\'u}sson, R.~Sigurðsson, and B.~Snorrason}, {\em Polynomials with exponents in compact convex sets and associated weighted extremal functions: The {B}ernstein-{W}alsh-{S}iciak theorem}, arXiv:2306.02486,  (2023).

\bibitem{MagSigSigSno:2023}
{\sc B.~S. Magnússon, {\'A}.~E. Sigurðardóttir, R.~Sigurdsson, and B.~Snorrason}, {\em Polynomials with exponents in compact convex sets and associated weighted extremal functions – fundamental results}, Annales Polonici Mathematici, 133 (2024).

\bibitem{Sno:2024}
{\sc B.~Snorrason}, {\em Polynomials with exponents in compact convex sets and associated weighted extremal functions -- generalized product property}, Math. Scand., 130 (2024).

\bibitem{Tre:2017}
{\sc L.~N. Trefethen}, {\em Multivariate polynomial approximation in the hypercube}, Proc. Amer. Math. Soc., 145 (2017), pp.~4837--4844.

\end{thebibliography}

\noindent
Science Institute,
University of Iceland,
IS-107 Reykjav\'ik,
ICELAND \\
alfheidur@hi.is
}

\end{document}